\documentclass[11pt,a4paper]{article}
\usepackage{amssymb,amsmath,amsfonts,mathrsfs,bm,enumerate,comment}
\allowdisplaybreaks[1]
\numberwithin{equation}{section}

\usepackage[colorlinks=true, pdfstartview=FitV, linkcolor=blue, citecolor=blue, urlcolor=blue,pagebackref=false]{hyperref}

\usepackage{tikz}
\usepackage{float}
\usepackage[font=footnotesize]{caption}

\usepackage{times,theorem,latexsym,color,comment}

\newcommand{\BOX}{\ensuremath\Box}

\newtheorem{theorem}{Theorem }[section]

{\theorembodyfont{\rmfamily}}
{\theorembodyfont{\rmfamily}}
{\theorembodyfont{\rmfamily}}
\newtheorem{lemma}[theorem]{Lemma}
\newtheorem{proposition}[theorem]{Proposition}
{\theorembodyfont{\rmfamily}\newtheorem{remark}[theorem]{Remark}}
{\theorembodyfont{\rmfamily}}

\newcommand{\N}{\mathbb{N}}

\newcommand{\R}{\mathbb{R}}
\newcommand{\C}{\mathbb{C}}

\newcommand{\dd}{\,d}
\newcommand{\opspt}{\operatorname{spt}}

\newcommand{\opRan}{\operatorname{Ran}}
\newcommand{\opdiv}{\operatorname{div}}
\newcommand{\opDiv}{\operatorname{Div}}
\newcommand{\oprot}{\operatorname{rot}}

\newcommand{\eps}{\varepsilon}
\newcommand{\del}{\delta}

\newcommand{\oparg}{\operatorname{arg}}

\newcommand{\ii}{\mathrm i}

\newcommand{\overbar}[1]{\mkern 1.5mu\overline{\mkern-1.5mu#1\mkern-1.5mu}\mkern 1.5mu}

\def\XXint#1#2#3{{\setbox0=\hbox{$#1{#2#3}{\int}$}
		\vcenter{\hbox{$#2#3$}}\kern-.5\wd0}}

\newenvironment{proof}{{\vskip\baselineskip\noindent\textbf{Proof:}}}
{\hspace*{.1pt}\hspace*{\fill}\BOX\vskip\baselineskip}

\newenvironment{proofx}[1]
{\vskip\baselineskip\noindent\textbf{Proof of {#1}:}}
{\hspace*{.1pt}\hspace*{\fill}\BOX\vskip\baselineskip}

\begin{document}

\title{A Runge-type theorem by remote forcing \\
for the linearized resistive MHD system}

\author{
Mitsuo Higaki
\thanks{
Department of Mathematics, 
Graduate School of Science, 
Kobe University, 
1-1 Rokkodai, Nada-ku, Kobe 657-8501, Japan.
\textit{E-mail address:}\texttt{higaki@math.kobe-u.ac.jp}
}
\and
Franck Sueur
\thanks{
Department of Mathematics, 
Maison du nombre, 6 avenue de la Fonte, 
University of Luxembourg, 
L-4364 Esch-sur-Alzette, Luxembourg.
\textit{E-mail address:}\texttt{Franck.Sueur@uni.lu}
}
}
\date{}

\maketitle

\begin{abstract}
In this paper, we study a quantitative Runge-type global approximation theorem for the linearized magnetohydrodynamic (MHD) system in bounded domains with arbitrary topology. In the context of magnetic relaxation, the interplay between the domain topology and magnetic field structure plays a crucial role. Recent studies illustrate a sharp contrast in the dynamics: while Enciso--Peralta-Salas \cite{EncisoPeralta-Salas2025} highlights that the geometric complexity of magnetic fields acts as an obstruction to relaxation in non-resistive regimes, Kozono-Shimizu-Yanagisawa \cite{KozonoShimizuYanagisawa2025} proves that in resistive regimes, the flow stably relaxes towards a harmonic equilibrium. Focusing on this resistive scenario, we adopt a control-theoretic viewpoint to quantitatively approximate the relaxation trajectory generated by the linearized initial-boundary value problem. Specifically, after decomposing the bounded-domain solution into the time-evolving part and the stationary part, we approximate it by a global solution on $\R^3$ under a remote forcing. An explicit dependence of the forcing cost on the approximation error is provided. 
\end{abstract}

\tableofcontents

    \section{Introduction and main results}

The interplay between the topology of the underlying domain and the geometric structure of magnetic fields is a central theme in modern magnetohydrodynamics (MHD). A fundamental question in this field, dating back to the pioneering works of Arnold \cite{Arnold1966}, Parker \cite{Parker1972,Parker1994_book}, and Moffatt \cite{Moffatt1985,Moffatt1986}, is the problem of magnetic relaxation: determining whether and how a given magnetic field evolves towards an MHD equilibrium.

Recent mathematical results have revealed that the answer depends critically on the presence of resistivity. In the context of the non-resistive MHD system, Enciso--Peralta-Salas \cite{EncisoPeralta-Salas2025} identified obstructions to magnetic relaxation. They proved that generic magnetic fields, due to the conservation of complex geometric structures, cannot be transformed into equilibria via volume-preserving diffeomorphisms. This highlights the rigidity of magnetic field lines in the ideal MHD case, where they are transported by the fluid flow.

In sharp contrast, for the resistive MHD system, the presence of magnetic diffusivity acts as a mechanism for energy dissipation, breaking the geometric rigidity in the non-resistive case. Focusing on this dissipative regime, Kozono-Shimizu-Yanagisawa \cite{KozonoShimizuYanagisawa2025} established the exponential stability of harmonic magnetic fields. Although their result relies on a smallness assumption to utilize perturbation techniques, it reveals a fundamental topological selection principle: the flow does not simply decay to zero, but stably relaxes towards a non-trivial harmonic equilibrium dictated by the domain's cohomology (see \cite{Schwarz1995_book}).

From a control-theoretic viewpoint, it is a highly non-trivial question whether such a physically natural, dissipative relaxation process can be artificially replicated by remote forcing. In this paper, we focus on the linearized resistive MHD system and investigate the global approximation of such a relaxation process. The global approximation theorem, or the Runge property, is a cornerstone of the theory of partial differential equations, asserting that a solution defined on a subdomain can be approximated by one defined on a larger domain. While the qualitative aspects of this property have been well established through the classic works for elliptic equations \cite{Lax1956,Malgrange1955,Browder1962} and the developments in parabolic equations \cite{Jones1975,EncisoGarcía-FerreroPeralta-Salas2019}, recent research has shifted towards its quantitative aspect to determine the ``cost of approximation'' \cite{RülandSalo2019,EncisoPeralta-Salas2021,DebrouwereKalmes2025}. A compelling example of this shift is found in the study of topological transitions: Enciso--Peralta-Salas \cite{EncisoPeralta-Salas2021} leveraged a quantitative Runge-type theorem for the nonlinear Schr\"{o}dinger equation to rigorously demonstrate the existence of quantum vortex reconnection. Their work highlights that the ability to precisely control the approximation error is not merely technical, but essential for capturing such structural evolutions. Inspired by this perspective, specifically the quantitative control of topological dynamics, we aim to evaluate the cost of replicating the aforementioned magnetic relaxation. Our main result, Theorem \ref{thm.main} below, provides explicit estimates that account for the hybrid nature of the dynamics, decomposed into a dissipative time-evolving part and a stationary topological part, thereby quantifying the control cost for replicating the magnetic relaxation towards an equilibrium. Although our analysis is restricted to the linearized system, this quantitative framework serves as a crucial first step towards controlling the full nonlinear MHD system.

Regarding the time-evolving part above, our analysis builds upon the $L^2$-quantitative Runge approximation for the non-stationary Stokes system obtained in \cite{Higaki2025_Runge}. However, that result was restricted to a bounded domain $D$ whose complement in $\R^3$ is connected; that is, whose second Betti number $\beta_2(D)$ is zero by Alexander duality \cite[Section 3.3]{Hatcher2002_book}. In this paper, we extend this quantitative theory to the linearized resistive MHD system in a domain $D$ with arbitrary topology, allowing for a non-zero second Betti number.

The mathematical foundation of the MHD system was laid by works such as Duvaut-Lions \cite{DuvautLions1972} and Sermange-Temam \cite{SermangeTemam1983}. When the first Betti number $\beta_1(D)$ is positive, the space $X_{{\rm har}}(D)$ of Neumann harmonic fields is non-trivial and represents the first cohomology of $D$. These fields encode the circulation around the handles of the domain. Consequently, the operator associated with the following linearized MHD system around $(0,H_*)$ possesses a non-trivial kernel, where $H_*$ is a background harmonic magnetic field:
\begin{equation*}\tag{MHD}\label{eq.MHD}
    \left\{
    \begin{array}{ll}
    \partial_{t} v - \Delta v 
    - H_*\cdot\nabla H - H\cdot\nabla H_* 
    + \nabla q
    = 0&\mbox{in}\ D\times (0,\infty), \\
    \partial_{t} H + \oprot (\oprot H) + \oprot (H_* \times v)
    = 0&\mbox{in}\ D\times (0,\infty), \\
    \opdiv v
    = \opdiv H 
    = 0&\mbox{in}\ D\times [0,\infty), \\
    (v,\, H\cdot n,\, \oprot H \times n) 
    = (0,0,0)&\mbox{on}\ \partial D\times (0,\infty), \\
    (v, H)
    = (v_0, H_0)\in L^2_\sigma(D)\times L^2_\sigma(D) &\mbox{on}\ D\times \{0\}. 
    \end{array}\right. 
\end{equation*}

    \paragraph{Notation and functional setting.}

From this point on, $D\subset \R^3$ is a bounded (connected) domain with smooth boundary, and $n$ denotes the unit outward normal on $\partial D$. We write
\[
    \partial D = \Gamma_0 \cup \cdots \cup \Gamma_M
\]
for the decomposition of $\partial D$ into its connected components. For an open set $G\subset \R^3$ and complex-valued mappings $f,g\in L^2(G)^m$ with $m\ge1$, we set 
\begin{equation}
    \langle f,g\rangle_G
    =
    \int_G f\cdot \overbar g \dd x.
\end{equation}
For every bounded Lipschitz domain $G\subset \R^3$, we write
\begin{equation}
    L^2_\sigma(G)
    :=
    \{
    u\in L^2(G)^3 
    \mid 
    \opdiv u = 0 \text{ in } \mathcal D'(G),\ 
    \gamma_n u = 0 \text{ in } H^{-1/2}(\partial G)
    \},
\end{equation}
where $\gamma_n$ denotes the normal trace on $H(\opdiv;G)=\{u\in L^2(G)^3 \mid \opdiv u\in L^2(G)\}$; see \cite[Chapter I, Section 2]{GiraultRaviart1986} for details. For our fixed domain $D$, we further set
\begin{equation}
\begin{aligned}
    X_{{\rm har}}(D)
    &=
    \{h\in C^\infty(\overbar{D})^3 \mid
    \oprot h = 0,\ \opdiv h = 0,\ h\cdot n|_{\partial D}=0\},\\
    \mathscr X^2_\sigma(D)
    &=
    X_{{\rm har}}(D)^\perp \cap L^2_\sigma(D).
\end{aligned}
\end{equation}
Then the Helmholtz-Weyl decomposition gives
\begin{equation}
    L^2_\sigma(D)
    =
    X_{{\rm har}}(D)\oplus \mathscr X^2_\sigma(D).
\end{equation}
In particular, $\dim X_{{\rm har}}(D)=\beta_1(D)$, whereas $\beta_2(D)$ represents the number of the bounded connected components of $\R^3\setminus \overbar{D}$ through Alexander duality; see \cite[Section 3.3]{Hatcher2002_book}. We denote by $\mathbb{P}$ the Helmholtz projection from $L^2(D)^3$ onto $L^2_\sigma(D)$, and by $\mathbb{P}_G$ the corresponding projection defined on a general domain $G$. Moreover, for an open set $G$ with Lipschitz boundary decomposed into connected components (hence domains) $G_1,\ldots,G_m$, we define
\[
    \mathbb{P}_G
    =
    \mathbb{P}_{G_1}+\cdots+\mathbb{P}_{G_m},
    \qquad
    L^2_\sigma(G)
    =
    L^2_\sigma(G_1)\oplus\cdots\oplus L^2_\sigma(G_m).
\]
For an open set $G$ and a function $f$ on $G$, we write $\mathtt e_G f$ for the zero extension of $f$ to $\R^3$. In particular, if $h\in L^2_\sigma(Y)$, then $\mathtt e_Y h$ is divergence-free in the sense of distributions on $\R^3$.

The linearized MHD operator on $L^2_\sigma(D)\times \mathscr X^2_\sigma(D)$ is
\begin{equation}
    \mathbb{S}
    \begin{pmatrix}
    u\\
    B
    \end{pmatrix}
    =
    \begin{pmatrix}
    -\mathbb{P}\Delta u - \mathbb{P}(H_*\cdot\nabla B + B\cdot\nabla H_*)\\
    \oprot(\oprot B) + \oprot(H_*\times u)
    \end{pmatrix},
\end{equation}
with domain
\begin{equation}
\begin{aligned}
    D(\mathbb{S})
    =
    &\big(H^2(D)^3\cap H^1_0(D)^3\cap L^2_\sigma(D)\big)\\
    &\times
    \Big\{
    B\in H^2(D)^3\cap \mathscr X^2_\sigma(D) 
    ~\Big|~
    B\cdot n|_{\partial D}=0,\ 
    \oprot B\times n|_{\partial D}=0
    \Big\}.
\end{aligned}
\end{equation}
The generation of the analytic semigroup for the linearized MHD system was classically studied by Yoshida-Giga \cite{YoshidaGiga1983}, and recently extended to general domains in works such as Monniaux \cite{Monniaux2021} and Kozono-Shimizu-Yanagisawa \cite{KozonoShimizuYanagisawa2025}.

The non-triviality of $X_{{\rm har}}(D)$ breaks the invertibility required for the resolvent analysis. To isolate this kernel part, we follow the functional framework of Kozono-Shimizu-Yanagisawa \cite{KozonoShimizuYanagisawa2025}: for a local solution $V=(v,H)$ of \eqref{eq.MHD} on $D$, we write
\begin{equation}
\begin{aligned}
    V(t)
    &=
    V_{{\rm evol}}(t) + V_{{\rm topo}},\\
    V_{{\rm evol}}(t)
    &\in
    L^2_\sigma(D)\times \mathscr X^2_\sigma(D),\\
    V_{{\rm topo}}
    &=
    (0,H_*')\in \{0\}\times X_{{\rm har}}(D).
\end{aligned}
\end{equation}
The two parts are treated by distinct approximation mechanisms. The evolutionary part $V_{{\rm evol}}(t)$ is treated by quantitative Runge approximations for the resolvent problem through the Dunford integral. Indeed, on the subspace $L^2_\sigma(D)\times \mathscr X^2_\sigma(D)$, the linearized operator leaves the magnetic term invariant; see \cite[Lemma 2.9]{KozonoShimizuYanagisawa2025}. Hence, provided that $H_*$ is sufficiently small, \cite[Proposition 1.1]{KozonoShimizuYanagisawa2025} yields the Dunford integral representation
\begin{equation}
    V_{{\rm evol}}(t)
    =
    \frac{1}{2\pi \ii}
    \int_\gamma 
    \exp(\lambda t)(\lambda+\mathbb{S})^{-1}V_0 
    \dd \lambda, 
\end{equation}
for initial data $V_0\in D(\mathbb{S})$ and a suitable contour $\gamma$ in the resolvent set. In contrast, the topological part $V_{{\rm topo}}$ is approximated directly by a stationary whole-space solution driven by time-independent remote forcings without using the Dunford integral. Theorem \ref{thm.main} then follows by superposing those two non-stationary and stationary approximations.

Let $H_*\in X_{{\rm har}}(D)$ satisfy the smallness assumption in Remark \ref{rem.smallness} below, and let $\tilde{H}_*$ be the extension given by Lemma \ref{lem.ext.H}. Fix a bounded open control set with smooth boundary
\begin{equation}\label{def.Y}
    Y\Subset \R^3\setminus \overbar{D},
\end{equation}
and let $B'$ be a ball sufficiently large so that $\overbar{D}\cup \overbar Y\subset B'$. We assume that every connected component of $B'\setminus \overbar{D}$ contains a nonempty open subset of $Y$.

We are now in a position to state the main result. Throughout this paper, we let $C>0$ denote a generic constant whose value may change from line to line. Any dependence of $C$ on non-fixed parameters will be indicated explicitly if necessary.

\begin{theorem}\label{thm.main}
For any $\rho_0\in(0,1)$, there exist positive constants $C,\nu,\eps_0$ such that the following statement holds: for every $0<\eps<\eps_0$ and every solution $V=(v,H)$ to \eqref{eq.MHD} that is represented as 
\begin{equation}\label{eq.V.thm.main}
    V(t) = V_{{\rm evol}}(t) + V_{{\rm topo}},
    \qquad
    V_{{\rm evol}}(t) = e^{-t {\mathbb{S}}} V_0,
    \qquad
    V_{{\rm topo}} = (0, H_*'), 
\end{equation}
for $V_0\in D({\mathbb{S}})$ and $H_*^\prime\in X_{{\rm har}}(D)$, there exist 
\begin{itemize}
\item
a pair of non-stationary forcings $(f,g) \in C^\infty([0,\infty);L^2(Y)^3\times L^2_\sigma(Y))$;

\item
a pair of stationary forcings $(i,j)\in L^2(Y)^3\times L^2_\sigma(Y)$; 
\end{itemize}
and a global approximation $U = (u,B)$ solving
\begin{equation}\label{eq.thm.main}
    \left\{
    \begin{array}{ll}
    \partial_{t} u - \Delta u 
    - \tilde{H}_*\cdot\nabla B - B\cdot\nabla \tilde{H}_* 
    + \nabla p 
    = \mathtt{e}_Y f
    + \mathtt{e}_Y i&\mbox{in}\ \R^3\times (0,\infty), \\
    \partial_{t} B + \oprot (\oprot B) + \oprot (\tilde{H}_* \times u)
    = \mathtt{e}_Y g + \mathtt{e}_Y j &\mbox{in}\ \R^3\times (0,\infty), \\
    \opdiv u 
    = \opdiv B 
    = 0&\mbox{in}\ \R^3\times [0,\infty), 
    \end{array}\right. 
\end{equation}
for some pressure $p$, such that 
\begin{equation}\label{est.thm.main}
    \|V(t) - U(t)\|_{L^2(D)}
    \le
    C\eps
    \|{\mathbb{S}} V_0\|_{L^2(D)}
    \exp(\rho_0 t)
    + C\eps
    \|H_*^\prime\|_{L^2(D)}, 
    \quad
    t \ge 0. 
\end{equation}
Moreover, $(f,g)$ is quantitatively estimated as
\begin{equation}\label{est.fg.thm.main}
    \|(f,g)(t)\|_{L^2(Y)} 
    \le
    \exp\big(C\exp(\eps^{-\nu})\big)
    \|\mathbb{S} V_0\|_{L^2(D)}
    \exp(\rho_0 t),
    \quad
    t \ge 0, 
\end{equation}
while $(i,j)$ is 
\begin{equation}\label{est.ij.thm.main}
    \|(i,j)\|_{L^2(Y)} 
    \le
    \exp(C\eps^{-\nu})
    \|H_*^\prime\|_{L^2(D)}. 
\end{equation}
\end{theorem}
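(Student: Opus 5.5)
By linearity, the plan is to split $V=V_{\rm evol}+V_{\rm topo}$ and build $U=U_{\rm evol}+U_{\rm topo}$. Here $U_{\rm evol}$ solves \eqref{eq.thm.main} with forcing $(\mathtt e_Y f,\mathtt e_Y g)$, and $U_{\rm topo}$ is stationary with forcing $(\mathtt e_Y i,\mathtt e_Y j)$. The estimate \eqref{est.thm.main} then follows from the triangle inequality.

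\emph{Stationary part.} Given $H_*'\in X_{\rm har}(D)$, I will look for a time-independent pair $U_{\rm topo}=(u_1,B_1)$ on $\R^3$ solving
\[
-\Delta u_1-\tilde H_*\cdot\nabla B_1-B_1\cdot\nabla\tilde H_*+\nabla p_1=\mathtt e_Y i,
\qquad
\oprot\oprot B_1+\oprot(\tilde H_*\times u_1)=\mathtt e_Y j,
\]
with $\|(u_1,B_1)-(0,H_*')\|_{L^2(D)}\le C\eps\|H_*'\|$.

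\begin{enumerate}
\item Note that $(0,H_*')$ is an exact solution in $D$. Indeed $\oprot H_*'=0$. Moreover $H_*\cdot\nabla H_*'+H_*'\cdot\nabla H_*=\nabla(H_*\cdot H_*')$ because both fields are curl-free, so this term is absorbed into the pressure.
\item Apply a quantitative Runge theorem for the stationary coupled elliptic system. I will use duality, which makes this a unique continuation (three-ball/propagation of smallness) statement for the adjoint system on $B'\setminus\overline D$.
\item The hypothesis that every component of $B'\setminus\overline D$ meets $Y$ is exactly what is needed here. It handles the bounded components counted by $\beta_2(D)$, where no control could otherwise enter.
\item The standard cost of such Runge theorems is $\exp(C\eps^{-\nu})$, obtained from logarithmic stability of unique continuation. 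This gives \eqref{est.ij.thm.main}.
\item One subtlety is the divergence-free constraint on $j$ and $B_1$. I will handle it by projecting the control via $\mathbb P_Y$; the dual test functions are divergence-free, so the projection is harmless.
\end{enumerate}

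\emph{Evolutionary part.} I will use the Dunford representation $V_{\rm evol}(t)=\frac1{2\pi\ii}\int_\gamma e^{\lambda t}(\lambda+\mathbb S)^{-1}V_0\,d\lambda$. The contour $\gamma$ is shifted so that $\mathrm{Re}\,\lambda\le\rho_0$ on it; this is possible because of the exponential stability under the smallness of $H_*$ (Remark \ref{rem.smallness}).

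\begin{enumerate}
\item For each $\lambda\in\gamma$, apply a quantitative resolvent Runge approximation. This extends \cite{Higaki2025_Runge} to the MHD resolvent with arbitrary $\beta_2$, again by duality plus quantitative unique continuation for the adjoint resolvent system.
\item This gives a whole-space resolvent solution $W_\lambda$ with forcing $F_\lambda$ supported in $Y$. The approximation satisfies $\|(\lambda+\mathbb S)^{-1}V_0-W_\lambda\|_{L^2(D)}\le\eps|\lambda|^{-1-\delta}\|\mathbb SV_0\|$, using $(\lambda+\mathbb S)^{-1}V_0=\lambda^{-1}(V_0-(\lambda+\mathbb S)^{-1}\mathbb SV_0)$ to gain decay.
\item The cost is $\|F_\lambda\|\le \exp(C e^{\eps_\lambda^{-\nu}})$, with the tolerance $\eps_\lambda$ chosen depending on $|\lambda|$.
\item The double exponential arises from two sources: the $\lambda$-dependent constants in the Carleman/three-ball estimates grow like $e^{C|\lambda|^{1/2}}$, and the large-$|\lambda|$ tail of the contour must be truncated at $|\lambda|\sim\eps^{-\nu'}$. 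Beyond the truncation the integrand is bounded by $|\lambda|^{-1-\delta}\|\mathbb SV_0\|$, contributing $O(\eps)$.
\item Define $U_{\rm evol}(t)=\frac1{2\pi\ii}\int_{\gamma_\eps}e^{\lambda t}W_\lambda\,d\lambda$ and $f,g$ analogously from $F_\lambda$, over the truncated contour $\gamma_\eps$. One checks that $U_{\rm evol}$ solves \eqref{eq.thm.main}, possibly up to an initial-data term that is itself approximable.
\end{enumerate}
Integrating the bounds over the finite contour gives \eqref{est.fg.thm.main} with the factor $e^{\rho_0 t}$, and smoothness in $t$ follows from the contour being compact.

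\emph{Main obstacle.} The main obstacle is the quantitative resolvent Runge step uniformly along $\gamma$, with explicit $\lambda$-dependence, for the coupled system with the variable coefficient $\tilde H_*$ and the constraint $\opdiv B=0$. Unique continuation for the adjoint MHD resolvent system with curl-curl structure is not standard. I would first reduce it to a system that is elliptic in the Douglis--Nirenberg sense, applying $\oprot$ and using the divergence constraints to reach a Laplacian-principal-part system, and then use existing Carleman estimates for such systems.
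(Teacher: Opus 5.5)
Your stationary part is essentially the paper's argument: duality, unique continuation in each component of $B'\setminus\overbar D$, and a cost of $\exp(C\eps^{-4/\mu})$. The evolutionary part, however, breaks down at its second step.

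\textbf{The target is not in the approximable class.} Every restriction $((\lambda+\mathbb{S}_{\R^3})^{-1}J_YF)|_D$ solves the \emph{homogeneous} resolvent system \eqref{eq.MHD.resol.loc} in $D$. This is because the forcing lives in $Y\Subset\R^3\setminus\overbar D$ and $\mathbb{P}_{\R^3}\mathtt{e}_Yf$ is a gradient in $D$. Your target $W=(\lambda+\mathbb{S})^{-1}V_0$ instead solves $(\lambda+\mathbb{S})W=V_0$, and this is not a technicality:
\begin{itemize}
\item Take $\Phi=(\varphi,\Psi)$ with $\varphi,\Psi\in C^\infty_c(D)^3$ divergence-free.
\item Let $V$ be obtained by applying the differential expressions of $\overbar\lambda+\mathbb{S}^*_{\R^3}$ to $\Phi$ without the projection $\mathbb{P}_{\R^3}$. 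Then $V$ is supported in $D$ and $\Pi_DV=(\overbar\lambda+\mathbb{S}^*_{\R^3})\Phi$.
\item By the computation in Lemma~\ref{lem.A}, $\langle TF,V\rangle_D=\langle J_YF,\Phi\rangle_{\R^3}=0$ for all $F$.
\item Integrating by parts with no boundary terms gives $\langle W,V\rangle_D=\langle V_0,\Phi\rangle_D$.
\end{itemize}
Hence $W$ is at positive distance from $\overbar{\opRan(T)}$ whenever $V_0\neq0$, and no $F_\lambda$ achieves your bound.

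Your splitting $(\lambda+\mathbb{S})^{-1}V_0=\lambda^{-1}(V_0-(\lambda+\mathbb{S})^{-1}\mathbb{S}V_0)$ does not help. The piece $(\lambda+\mathbb{S})^{-1}\mathbb{S}V_0$ is again inhomogeneous. The piece $\lambda^{-1}V_0$ integrates to the constant field $V_0$, and no stationary forced whole-space solution can approximate it, since $V_0$ does not solve the stationary system in $D$ ($\mathbb{S}$ is injective). That piece also has norm exactly $|\lambda|^{-1}\|V_0\|$, so your $|\lambda|^{-1-\delta}$ tail bound fails for it. In time-domain terms, a contour integral of purely forced resolvent solutions discards the free evolution of the initial datum, which is exactly what must match $V_0$. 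The ``initial-data term'' you set aside is therefore the whole difficulty.

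\textbf{Missing idea: a whole-space reference solution.} Extend $V_0$ to a compactly supported solenoidal $\tilde V_0\in H^2$ with $\tilde V_0|_D=V_0$ (Kato--Mitrea--Ponce--Taylor). Set $U_{\rm ref}(t)=e^{-t\mathbb{S}_{\R^3}}\tilde V_0$, which is an exact unforced solution, and approximate only
\[
V(\lambda)=(\lambda+\mathbb{S})^{-1}V_0-\big((\lambda+\mathbb{S}_{\R^3})^{-1}\tilde V_0\big)\big|_D .
\]
Because $\tilde V_0=V_0$ and $\tilde H_*=H_*$ in $D$:
\begin{itemize}
\item the inhomogeneities cancel, so $V(\lambda)\in\mathcal X^{\rm tgt}$;
\item the $\lambda^{-1}V_0$ terms also cancel, giving $\|V(\lambda)\|_{L^2(D)}\le C|\lambda|^{-2}\|\mathbb{S}V_0\|_{L^2(D)}$, which handles the high-frequency tail uniformly in $t\ge0$.
\end{itemize}
The contour must then be admissible for both $\mathbb{S}$ and $\mathbb{S}_{\R^3}$. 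The latter has no spectral gap, which forces the arc $|\lambda|=\rho_0$ and the factor $e^{\rho_0 t}$. One then sets $U_{\rm evol}=U_{\rm ref}+U_{\rm corr}$.

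\textbf{Second, smaller gap.} Your $F_\lambda$ is chosen separately for each $\lambda$, via a truncated singular-value expansion whose truncation depends on $\lambda$. So neither measurability in $\lambda$ (needed for the integral defining $(f,g)$) nor the $C^\infty$ regularity of $(f,g)$ is justified. The paper fixes this with the Tikhonov minimizer $\mathsf F(\lambda)=T_\lambda^*(T_\lambda T_\lambda^*+\tau I)^{-1}V(\lambda)$, with $\tau=\eta^2L^2/M^2$. It depends continuously on $\lambda$ and inherits both the error bound and the cost bound $CM\|\mathbb{S}V_0\|_{L^2(D)}$ from the pointwise Runge approximant.
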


\begin{remark}\label{rem1.thm.main}
\begin{enumerate}[(i)]
\item
The presence of a background magnetic field $H_*$ makes the linearized MHD system a system with variable coefficients. In the works \cite{HigakiSueur2025,Higaki2025_Runge}, the global approximation was constructed explicitly by using series expansions in (vector) spherical harmonics and by solving a system of ordinary differential equations (ODEs). While powerful, this ODE method relies heavily on the algebraic structure of constant coefficient operators, or separation of variables. It is inherently not robust under perturbations by variable coefficients; the loss of symmetry breaks the diagonal structure of the system, making explicit construction practically impossible for general $H_*$.

To overcome this limitation, we abandon constructing a global approximation that solves the homogeneous system. Instead, we adopt the ``source term approach'' inspired by R\"{u}land-Salo \cite{RülandSalo2019}. We prove the existence of a global solution under forcing supported in a control set $Y$. Then, by the quantitative unique continuation property rather than explicit calculation, the approach in \cite{RülandSalo2019} offers a robust way to estimate the cost of approximation, valid even for variable coefficient operators.

\item
In the context of MHD, Runge approximation on compact subsets is topologically trivial. Indeed, considering the approximations on an interior set $K$ of $D$ as in \cite{EncisoGarcía-FerreroPeralta-Salas2019,HigakiSueur2025} obscures the cohomological structure of the magnetic field. Locally, a harmonic magnetic field $H_\ast$ can be expressed as a gradient of a potential, losing its global circulatory character. The richness of the MHD system in a non-simply connected domain lies precisely in the interaction between the fluid and the background magnetic field $H_\ast$.

\item
The factors $\exp(\rho_0 t)$ in \eqref{est.thm.main} and \eqref{est.fg.thm.main} arise from the contour integral method used in the proof in Section \ref{sec.prf}; they are not claimed to reflect the sharp long-time behavior. The contour admissible simultaneously for $\mathbb{S}$ and $\mathbb{S}_{\R^3}$, the extended whole-space operator in Section \ref{sec.ext}, contains a low-frequency circular arc on which $\Re\lambda$ may be positive, while the available resolvent information does not allow us to replace it by a common contour contained in $\{\Re\lambda\le -c\}$ for some $c>0$. Accordingly, an estimate with a factor $\exp(-ct)$ for the evolutionary error would require additional low-frequency information or a different construction. Any such improvement is concerned only with the evolutionary part, since the topological part is time independent.
\end{enumerate}
\end{remark}

\begin{remark}\label{rem2.thm.main}
It is instructive to clarify the relationship between topology, forcing, and boundary conditions in the context of Runge-type global approximation.
\begin{itemize}
\item
{\bf Topology and Forcing}: For a bounded domain $D$, the condition $\beta_2(D)=0$ means that $\R^3\setminus \overbar{D}$ has no bounded connected components. This was the topological assumption used in \cite{Higaki2025_Runge} to construct a homogeneous global approximation. By allowing for forcing (source terms) supported in the complement $\R^3\setminus \overbar{D}$, as we do in this paper following \cite{RülandSalo2019}, the Runge approximation becomes feasible even when $\beta_2(D)>0$, akin to allowing poles in classical Walsh's theorem \cite{Walsh1929} for harmonic functions.

\item
{\bf Boundary Conditions}: The emergence of a non-trivial equilibrium depends critically on the boundary conditions. For the velocity field $v$, the no-slip condition precludes the existence of non-trivial harmonic fields, meaning the velocity part of the local solution effectively has no topological part. In contrast, for the magnetic field $H$, the perfect conductor condition $H \cdot n = 0$ allows for non-trivial harmonic fields. Remarkably, the coupling of the MHD system dictates the necessity of the stationary velocity forcing $i$ to maintain the equilibrium. As a byproduct, when $H_*=0$, our result covers the Stokes system, extending the scope of \cite{Higaki2025_Runge}.
\end{itemize}
\end{remark}

The rest of this paper is organized as follows. Section \ref{sec.ext} gives an extension of the MHD operator $\mathbb{S}$ to the whole space $\R^3$. Section \ref{sec.resol} presents the quantitative Runge approximation for the resolvent problem, which handles the time-evolving part. Section \ref{sec.harm} shows the global approximation of harmonic vector fields, representing the stationary topological part. Finally, Section \ref{sec.prf} is devoted to the proof of Theorem \ref{thm.main}, synthesizing these approximations with the frequency decomposition of the resolvent parameter.

    \section{Extension and adjoint of the MHD operator}
    \label{sec.ext}

For the use in the following sections, we need to extend the MHD operator $\mathbb{S}$ on $D$ to the one on $\R^3$. For this purpose, we extend $H_*\in X_{{\rm har}}(D)$, which defines $\mathbb{S}$, to $\R^3$ by cut-off and keeping divergence-free as follows. Let $R>0$ be sufficiently large so that $\overbar{D}\subset B_R$.

\begin{lemma}\label{lem.ext.H}
There exists an extension $\tilde{H}_* \in W^{1,\infty}(\R^3)^3$ such that 
\[
    \tilde{H}_*|_{D} = H_*,
    \qquad
    \opdiv \tilde{H}_* = 0,
    \qquad
    \opspt \tilde{H}_* \Subset B_R.
\]
Moreover,
\[
    \|\tilde{H}_*\|_{W^{1,\infty}(\R^3)}
    \le
    C\|H_*\|_{W^{1,\infty}(D)}.
\]
The constant $C>0$ is independent of $H_*$. 
\end{lemma}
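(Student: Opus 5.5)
We need a divergence-free $W^{1,\infty}$ extension of the Neumann harmonic field $H_*\in X_{\rm har}(D)$ with compact support in $B_R$. The plan is to extend first by a bounded linear Sobolev extension, then correct the divergence with a Bogovskii operator on each bounded component of the exterior, and finally check that zero-flux conditions hold so that the correction is solvable.

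\textbf{Step 1: a preliminary extension.} Since $H_*\in C^\infty(\overbar D)^3$, apply a Stein-type (or Seeley-type) universal extension operator $E$. This gives $E H_*\in W^{1,\infty}(\R^3)^3$ with $\|EH_*\|_{W^{1,\infty}}\le C\|H_*\|_{W^{1,\infty}(D)}$. We may also arrange that $EH_*$ is $C^\infty$ near $\partial D$ from outside (Seeley reflection preserves smoothness). Multiply by a cutoff $\chi\in C_c^\infty(B_R)$ with $\chi=1$ on a neighborhood $\overbar{D_\delta}$ of $\overbar D$, and set $w=\chi EH_*$. Then $w|_D=H_*$, and $\phi:=\opdiv w\in L^\infty$ is supported in $\Omega:=B_R\setminus\overbar D$. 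Moreover, $\phi$ vanishes near $\partial D$ only if the extension is divergence-free there, which it generally is not. So $\phi$ lives on the open set $\Omega$, whose connected components are $\Omega_0$ (the one touching $\partial B_R$) and the bounded components $\Omega_1,\dots,\Omega_M$ of $\R^3\setminus\overbar D$.

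\textbf{Step 2: divergence correction.} On each $\Omega_k$ we solve $\opdiv z_k=\phi$ with $z_k$ vanishing on $\partial\Omega_k$, and set $\tilde H_*=w-\sum_k \mathtt e_{\Omega_k}z_k$. Two points need checking.

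\emph{Compatibility.} We need $\int_{\Omega_k}\phi=0$. By the divergence theorem, $\int_{\Omega_k}\opdiv w=\int_{\partial\Omega_k} w\cdot \nu$. The boundary $\partial\Omega_k$ consists of some of the components $\Gamma_i$ of $\partial D$ (and $\partial B_R$ for $k=0$, where $w=0$). On each $\Gamma_i$ we have $w\cdot n=H_*\cdot n=0$, so all fluxes vanish. This is exactly where the Neumann condition in $X_{\rm har}(D)$ is used.

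\emph{Regularity.} For the glued field to stay in $W^{1,\infty}$, and for it to remain divergence-free across $\partial D$ in the distributional sense, we need $z_k$ to have vanishing trace on $\partial\Omega_k$ and to satisfy $z_k\in W^{1,\infty}$. With zero traces, zero extension preserves $W^{1,\infty}$ and gives $\opdiv \tilde H_*=0$ on $\R^3$. The flux across $\partial D$ is continuous since $z_k=0$ there.

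\textbf{Main obstacle.} The Bogovskii operator is bounded $L^p\to W^{1,p}_0$ only for $1<p<\infty$; it is not bounded $L^\infty\to W^{1,\infty}$. I would circumvent this by exploiting smoothness. $H_*$ is smooth up to $\partial D$, and the extension can be chosen smooth in $\overbar{\Omega_k}$ near $\partial D$, so $\phi\in C^{0,\alpha}(\overbar{\Omega_k})$ (indeed smooth). One then uses the Bogovskii operator's mapping $C^{m,\alpha}\to C^{m+1,\alpha}$, or $W^{m,p}_0\to W^{m+1,p}_0$ with large $m$ and $p>3$, combined with Sobolev embedding. This yields $z_k\in C^{1,\alpha}(\overbar{\Omega_k})$ vanishing on $\partial\Omega_k$. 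If instead one insists on $\phi$ compactly supported in $\Omega_k$, first modify $w$ near $\partial D$ so that it is divergence-free there: for example, extend across $\partial D$ in a collar by a divergence-free construction. Alternatively, subtract a smooth field with prescribed compactly supported divergence on each $\Omega_k$, which is possible by the zero-flux condition.

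\textbf{Step 3: the bound.} Linearity of all operators gives $\|\tilde H_*\|_{W^{1,\infty}}\le C\|H_*\|_{W^{1,\infty}(D)}$. To keep this estimate controlled by the $W^{1,\infty}$ norm alone, rather than higher norms, I would use that $X_{\rm har}(D)$ is finite dimensional of dimension $\beta_1(D)$. Define the extension on a basis and extend linearly; since all norms on a finite-dimensional space are equivalent, the constant bound follows with $C$ independent of $H_*$.
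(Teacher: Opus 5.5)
Your proof is correct, but it takes a more hands-on route than the paper. The paper does not build the extension itself. It applies the divergence-free extension theorem of Kato--Mitrea--Ponce--Taylor (Corollary 3.2, with $s=2$ and $p>3$) to each basis element $h_j$ of $X_{\rm har}(D)$, and notes that the flux coefficients in that theorem vanish because $\int_{\Gamma_\ell} h_j\cdot n\,d\sigma=0$ on every component $\Gamma_\ell$. It then uses $W^{2,p}\hookrightarrow W^{1,\infty}$ and extends linearly.

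You instead reprove the special case of that theorem you need. You use a Seeley extension with a cutoff, then a Bogovskii correction on each component of $B_R\setminus\overline{D}$. Solvability comes from the same zero-flux condition, and $W^{1,\infty}$ regularity comes from $W^{1,p}_0\to W^{2,p}_0$ with $p>3$. Your last step, using finite dimensionality of $X_{\rm har}(D)$ to pass from a second-order bound to the $W^{1,\infty}$ bound, is the same as the paper's.

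The paper's version is shorter and relies on a theorem valid for Lipschitz domains and rough data. Yours is self-contained and shows exactly where $H_*\cdot n=0$ enters.

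Two small points to tighten:
\begin{enumerate}
\item Bogovskii's $W^{1,p}_0\to W^{2,p}_0$ estimate needs $\phi\in W^{1,p}_0(\Omega_k)$, that is, $\phi=0$ on $\partial\Omega_k$; it does not need $\phi$ to vanish near $\partial D$. This holds because the Seeley extension is smooth across $\partial D$ and divergence-free in $D$, and because $\chi$ kills $\phi$ near $\partial B_R$. Your remark that $\phi$ ``vanishes near $\partial D$ only if\ldots'' slightly obscures this.
\item $z_0$ may be nonzero arbitrarily close to $\partial B_R$, so as written you only get $\opspt\tilde H_*\subset\overline{B_R}$. Do the correction on $B_{R'}\setminus\overline{D}$ with $\opspt\chi\subset B_{R'}$ and $R'<R$ to obtain $\opspt\tilde H_*\Subset B_R$.
\end{enumerate}
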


\begin{proof}
Let $\{h_1,\dots,h_m\}$ be a basis of $X_{{\rm har}}(D)$. Since $h_j\cdot n=0$ on $\partial D$, we have
\begin{equation}\label{eq1.prf.lem.ext.H}
    \int_{\Gamma_\ell} 
    h_j\cdot n
    \dd \sigma=0, 
\end{equation}
for every connected component $\Gamma_\ell$ of $\partial D$.

Fix $p>3$. Thanks to the divergence-free extension theorem of Kato-Mitrea-Ponce-Taylor \cite[Corollary 3.2]{KatoMitreaPonceTaylor2000} with $s=2$, there exists $\tilde{h}_j\in W^{2,p}(\R^3)^3$ such that
\[
    \tilde{h}_j|_D=h_j,
    \qquad
    \opdiv \tilde{h}_j=0,
    \qquad
    \opspt \tilde{h}_j \Subset B_R.
\]
Note that the flux coefficients $\lambda_j$ appearing in \cite[Corollary 3.2]{KatoMitreaPonceTaylor2000} vanish due to \eqref{eq1.prf.lem.ext.H}. Since $p>3$, the Sobolev embedding gives
$
    W^{2,p}(\R^3)
    \hookrightarrow
    W^{1,\infty}(\R^3)
$.

Now write
\[
    H_* 
    = 
    \sum_{j=1}^m c_j h_j,
    \qquad
    \tilde{H}_* 
    := 
    \sum_{j=1}^m c_j \tilde{h}_j.
\]
Then the extension properties required in the statement follow by linearity. Moreover, since $X_{{\rm har}}(D)$ is finite-dimensional, all norms are equivalent on it, and therefore
\[
    \|\tilde{H}_*\|_{W^{1,\infty}(\R^3)}
    \le
    C \sum_{j=1}^m |c_j|
    \le
    C \|H_*\|_{W^{1,\infty}(D)}.
\]
This completes the proof. 
\end{proof}

Let us denote by $\mathbb{P}_{\R^3}$ the Helmholtz projection in $\R^3$, by $\mathbb{A}_{\R^3}:=-\mathbb{P}_{\R^3}\Delta$ the Stokes operator on $L^2_\sigma(\R^3)$, and by $\mathbb{L}_{\R^3}:=\oprot\oprot$ the magnetic operator on $L^2_\sigma(\R^3)$.

Using the extension $\tilde{H}_*$ in Lemma \ref{lem.ext.H}, we define 
\[
\begin{split}
    \mathbb{J}_{\R^3} u 
    &= \oprot (\tilde{H}_* \times u), \\
    \mathbb{K}_{\R^3} B
    &= -\mathbb{P}_{\R^3}
    (\tilde{H}_* \cdot\nabla B 
    + B\cdot\nabla \tilde{H}_*). 
\end{split}
\]
Then the global linearized MHD operator $\mathbb{S}_{\R^{3}}$ is defined by
\[
    \mathbb{S}_{\R^3}
    \begin{pmatrix}
    u\\
    B
    \end{pmatrix}
    =
    \begin{pmatrix}
    \mathbb{A}_{\R^3} & \mathbb{K}_{\R^3}\\
    \mathbb{J}_{\R^3} & \mathbb{L}_{\R^3}
    \end{pmatrix}
    \begin{pmatrix}
    u\\
    B
    \end{pmatrix}, 
    \qquad
    D(\mathbb{S}_{\R^3})
    = 
    D(\mathbb{A}_{\R^3})\times D(\mathbb{L}_{\R^3}). 
\]
The adjoint operator $\mathbb{S}_{\R^{3}}^{*}$ is given as follows.

\begin{lemma} \label{lem.adjoint}
The adjoint operator $\mathbb{S}_{\R^{3}}^{*}$ is given by 
\[
    \mathbb{S}_{\R^3}^*
    \begin{pmatrix}
    u\\
    B
    \end{pmatrix}
    =
    \begin{pmatrix}
    \mathbb{A}_{\R^3} & \mathbb{J}_{\R^3}^* \\
    \mathbb{K}_{\R^3}^* & \mathbb{L}_{\R^3}
    \end{pmatrix}
    \begin{pmatrix}
    u\\
    B
    \end{pmatrix}, 
    \qquad
    D(\mathbb{S}_{\R^3}^{*})
    = 
    D(\mathbb{S}_{\R^3}), 
\]
where the adjoint coupling operators are defined by
\[
\begin{split}
    \mathbb{J}_{\R^3}^* B 
    &= \mathbb{P}_{\R^3} ((\oprot B) \times \tilde{H}_*), \\
    \mathbb{K}_{\R^3}^* u 
    &= \mathbb{P}_{\R^3}
    (\tilde{H}_* \cdot\nabla u 
    - (\nabla \tilde{H}_*)^\top u). 
\end{split}
\]
\end{lemma}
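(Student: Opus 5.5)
We identify the adjoint with respect to the complex inner product $\langle\cdot,\cdot\rangle_{\R^3}$ on $L^2_\sigma(\R^3)\times L^2_\sigma(\R^3)$. The natural split is $\mathbb{S}_{\R^3}=\mathbb{S}_0+\mathbb{T}$, where $\mathbb{S}_0=\mathrm{diag}(\mathbb{A}_{\R^3},\mathbb{L}_{\R^3})$ is the diagonal part and $\mathbb{T}$ is the off-diagonal coupling $\begin{pmatrix}0&\mathbb{K}_{\R^3}\\ \mathbb{J}_{\R^3}&0\end{pmatrix}$.

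\emph{The diagonal part.} On $L^2_\sigma(\R^3)$ one has $\mathbb{P}_{\R^3}\Delta=\Delta$ and $\oprot\oprot=-\Delta$. Hence $\mathbb{A}_{\R^3}$ and $\mathbb{L}_{\R^3}$ are both $-\Delta$ with domain $H^2(\R^3)^3\cap L^2_\sigma(\R^3)$. By Fourier transform they are nonnegative and self-adjoint. So $\mathbb{S}_0$ is self-adjoint with domain $D(\mathbb{S}_{\R^3})$.

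\emph{The coupling is relatively bounded.} Since $\tilde H_*\in W^{1,\infty}$ by Lemma~\ref{lem.ext.H}, the operators $\mathbb{J}_{\R^3}$ and $\mathbb{K}_{\R^3}$ are first order with bounded coefficients. They satisfy
\[
\|\mathbb{T}(u,B)\|_{L^2}\le C\|(u,B)\|_{H^1}\le \delta\|\mathbb{S}_0(u,B)\|_{L^2}+C_\delta\|(u,B)\|_{L^2}
\]
for every $\delta>0$. The same bound holds for the candidate operator $\mathbb{T}^\sharp=\begin{pmatrix}0&\mathbb{J}_{\R^3}^*\\ \mathbb{K}_{\R^3}^*&0\end{pmatrix}$, because its entries are also first order with $W^{1,\infty}$ coefficients. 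By the standard perturbation result for adjoints, if $\mathbb{T}$ is $\mathbb{S}_0$-bounded with relative bound $<1$, then $(\mathbb{S}_0+\mathbb{T})^*=\mathbb{S}_0+\mathbb{T}^*$ on $D(\mathbb{S}_0)$. This holds provided $\mathbb{T}^*\supset \mathbb{T}^\sharp$ and $\mathbb{T}^\sharp$ is itself $\mathbb{S}_0$-bounded with bound $<1$. In that case $\mathbb{S}_0+\mathbb{T}^\sharp$ is closed on $D(\mathbb{S}_0)$, and for $\lambda$ large both $\lambda+\mathbb{S}_0+\mathbb{T}$ and $\lambda+\mathbb{S}_0+\mathbb{T}^\sharp$ are bijective. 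From the inclusion $\mathbb{S}_0+\mathbb{T}^\sharp\subset(\mathbb{S}_0+\mathbb{T})^*$ together with bijectivity of $\lambda+(\cdot)$ on both sides, we conclude equality. This gives $D(\mathbb{S}_{\R^3}^*)=D(\mathbb{S}_{\R^3})$.

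\emph{The formal identities.} It remains to check
\[
\langle \mathbb{J}_{\R^3}u,B\rangle=\langle u,\mathbb{J}_{\R^3}^*B\rangle,\qquad \langle \mathbb{K}_{\R^3}B,u\rangle=\langle B,\mathbb{K}_{\R^3}^*u\rangle,
\]
for divergence-free $u,B\in H^1$. By density it suffices to take smooth compactly supported fields; the general case then follows by continuity, since both sides are continuous in $H^1$. These identities are the main computational step. Two facts are used repeatedly: the projection $\mathbb{P}_{\R^3}$ can be dropped when pairing with divergence-free fields, and $\opdiv\tilde H_*=0$.

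For the first identity, integrate by parts:
\[
\int\oprot(\tilde H_*\times u)\cdot\overbar B\,dx=\int(\tilde H_*\times u)\cdot\oprot\overbar B\,dx=\int u\cdot(\oprot \overbar B\times\tilde H_*)\,dx,
\]
using the cyclic property of the triple product. Since $\tilde H_*$ is real, this equals $\langle u,(\oprot B)\times\tilde H_*\rangle$.

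For the second identity, consider first $-\int(\tilde H_*\cdot\nabla B)\cdot\overbar u\,dx$. Integrating by parts and using $\opdiv\tilde H_*=0$, it equals $\int B\cdot(\tilde H_*\cdot\nabla\overbar u)\,dx$. Next, the term $-\int (B\cdot\nabla\tilde H_*)\cdot\overbar u\,dx$ can be written as $-\int B\cdot(\nabla\tilde H_*)^\top\overbar u\,dx$, with the appropriate index convention for $(\nabla\tilde H_*)^\top$. Adding the two gives $\langle B,\tilde H_*\cdot\nabla u-(\nabla\tilde H_*)^\top u\rangle$, which is the stated $\mathbb{K}_{\R^3}^*$ after inserting $\mathbb{P}_{\R^3}$.

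The only delicate point is a careful index convention for $(\nabla\tilde H_*)^\top$. The integrations by parts themselves are justified because $\tilde H_*$ is Lipschitz with compact support.
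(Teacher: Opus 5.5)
Your proposal is correct and follows the paper's proof on the substantive computations: integration by parts with the cyclic triple product (using that $\tilde{H}_*$ is real) for $\mathbb{J}_{\R^3}$, and for $\mathbb{K}_{\R^3}$ dropping $\mathbb{P}_{\R^3}$ against divergence-free fields, using $\opdiv\tilde{H}_*=0$, and fixing the index convention for $(\nabla\tilde{H}_*)^\top$. Your one addition is the justification of $D(\mathbb{S}_{\R^3}^*)=D(\mathbb{S}_{\R^3})$, via relative boundedness of the coupling blocks together with the inclusion-plus-bijectivity argument for $\lambda+\mathbb{S}_{\R^3}$ and $\lambda+\mathbb{S}_0+\mathbb{T}^\sharp$. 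The paper leaves this implicit after noting that $\mathbb{A}_{\R^3}$ and $\mathbb{L}_{\R^3}$ are self-adjoint, so your argument is a sound completion of that step.
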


\begin{proof}
Note that $\mathbb{A}_{\R^3}$ and $\mathbb{L}_{\R^3}$ are self-adjoint on their respective domains.

For $\mathbb{J}_{\R^3}^*$, by the identity $(A \times B) \cdot C = A \cdot (B \times C)$ and integration by parts, 
\[
\begin{split}
    \langle \mathbb{J}_{\R^3} u, B \rangle 
    &= \langle \oprot (\tilde{H}_* \times u), B \rangle \\
    &= \langle \tilde{H}_* \times u, \oprot B \rangle \\
    &= \langle u, \oprot B \times \tilde{H}_* \rangle.
\end{split}
\]
Applying the projection $\mathbb{P}_{\R^3}$ leads to the formula of $\mathbb{J}_{\R^3}^*$.

For $\mathbb{K}_{\R^3}^*$, by the definition, 
\[
\begin{split}
    \langle \mathbb{K}_{\R^3} B, u \rangle 
    &= \langle -\mathbb{P}_{\R^3}(\tilde{H}_*\cdot\nabla B+B\cdot\nabla\tilde{H}_*), u \rangle \\
    &= -\langle \tilde{H}_*\cdot\nabla B, u \rangle
    - \langle B\cdot\nabla\tilde{H}_*, u \rangle.
\end{split}
\]
For the first term, since $\opdiv \tilde{H}_* = 0$, 
\[
    \langle -\tilde{H}_* \cdot \nabla B, u \rangle = \langle B, \tilde{H}_* \cdot \nabla u \rangle.
\]
For the second term, we look at the $i$-th component $-B_k \partial_k (\tilde{H}_*)_i$: 
\[
    \int_{\R^3} 
    \big(-B_k \partial_k (\tilde{H}_*)_i\big) 
    \overbar{u}_i \dd x 
    = 
    \int_{\R^3} 
    B_k 
    \big(-\partial_k (\tilde{H}_*)_i \overbar{u}_i\big) \dd x.
\]
The term in the parenthesis is the $k$-th component of $-(\nabla \tilde{H}_*)^\top u$. Combining these and applying the projection $\mathbb{P}_{\R^3}$ leads to the formula of $\mathbb{K}_{\R^3}^*$. This completes the proof. 
\end{proof}

\begin{remark}\label{rem.smallness}
Let $C_{\rm ext}$ denote the operator norm of the extension mapping in Lemma \ref{lem.ext.H}. For the operator $\mathbb{S}$ on the bounded domain $D$, Kozono-Shimizu-Yanagisawa \cite[Proposition 1.1]{KozonoShimizuYanagisawa2025} gives $\delta_{\rm bd}>0$ such that $\mathbb{S}$ is sectorial on $L^2_\sigma(D)\times \mathscr X^2_\sigma(D)$ whenever
\[
    \|H_*\|_{L^\infty(D)}
    + \|\nabla H_*\|_{L^\infty(D)}
    \le
    \delta_{\rm bd}.
\]
For the whole-space operator, we write
\[
    \mathbb{S}^{(0)}_{\R^3}
    :=
    \begin{pmatrix}
    \mathbb{A}_{\R^3}&0\\
    0&\mathbb{L}_{\R^3}
    \end{pmatrix},
    \qquad
    \mathbb{S}^{(1)}_{\R^3}
    :=
    \begin{pmatrix}
    0&\mathbb{K}_{\R^3}\\
    \mathbb{J}_{\R^3}&0
    \end{pmatrix}.
\]
Since $\mathbb{A}_{\R^3}$ and $\mathbb{L}_{\R^3}$ are positive self-adjoint, $\mathbb{S}^{(0)}_{\R^3}$ is sectorial on $L^2_\sigma(\R^3)\times L^2_\sigma(\R^3)$. Moreover, standard interpolation gives, for every $\eta>0$ and every $U$ in $D(\mathbb{S}^{(0)}_{\R^3})$,
\[
    \|\mathbb{S}^{(1)}_{\R^3}U\|_{L^2(\R^3)}
    \le
    C\|\tilde{H}_*\|_{W^{1,\infty}(\R^3)}
    \big(
    \eta \|\mathbb{S}^{(0)}_{\R^3}U\|_{L^2(\R^3)}
    + C_\eta \|U\|_{L^2(\R^3)}
    \big).
\]
Hence there exists $\delta_{\rm ws}>0$ such that $\mathbb{S}_{\R^3}=\mathbb{S}^{(0)}_{\R^3}+\mathbb{S}^{(1)}_{\R^3}$ and likewise $\mathbb{S}_{\R^3}^*$ are sectorial whenever $\|\tilde{H}_*\|_{W^{1,\infty}(\R^3)}\le \delta_{\rm ws}$, by the perturbation theorem for sectorial operators; see \cite[Theorem 1.2.14]{Lunardi1995}. Finally, let $\delta_{\rm coe}>0$ be a small constant ensuring the coercivity in the proof of Lemma \ref{lem.solver.topo}. We therefore fix once and for all
\[
    \delta_*
    :=
    \min\{
    \delta_{\rm bd},\, 
    C_{\rm ext}^{-1}\delta_{\rm ws},\, 
    C_{\rm ext}^{-1}\delta_{\rm coe}
    \}
\]
and assume that
\[
    \|H_*\|_{W^{1,\infty}(D)}
    \le
    \delta_*.
\]
This assumption will be used throughout whenever the smallness of $H_*$ is required.
\end{remark}

    \section{Global approximation of resolvent solutions}
    \label{sec.resol}

In this section, we present the quantitative Runge approximation for the resolvent problem associated with the linearized MHD operator $\mathbb{S}$. This analysis serves as the building block for approximating the time-evolving part $V_{{\rm evol}}$ introduced in \eqref{eq.V.thm.main} in the frequency domain.

For $\del\in(0,\pi)$, we set
\[
    \Sigma_\del
    :=
    \{z\in\C\setminus\{0\} \mid |\oparg z|<\del\}.
\]
Throughout this section, let
\begin{equation}\label{assump.lambda}
    \lambda
    \in
    \Sigma_{\pi-\del} \cap \{|\lambda| \ge \rho_0\},
\end{equation}
with $\del\in(0,\pi/2)$ and $\rho_0>0$. We then consider $V=(v,H)$ solving
\begin{equation}\label{eq.MHD.resol.loc}
    \left\{
    \begin{array}{ll}
    \lambda v - \Delta v 
    - H_*\cdot\nabla H - H\cdot\nabla H_* 
    + \nabla q
    = 0&\mbox{in}\ D, \\
    \lambda H + \oprot (\oprot H) + \oprot (H_* \times v)
    = 0&\mbox{in}\ D, \\
    \opdiv v
    = \opdiv H 
    = 0&\mbox{in}\ D.
    \end{array}\right.
\end{equation}

We start with the interpolation estimate for the vorticity.

\begin{lemma}\label{lem.vor.interp}
Let $Y\subset \R^3$ be a bounded Lipschitz domain, and $Y_0\Subset Y$ be a bounded Lipschitz subdomain. Then there exists $C>0$ such that, for every $\Psi\in H^2(Y)^3$,
\begin{equation}\label{est.lem.vor.interp}
    \|\oprot\Psi\|_{L^2(Y_0)}
    \le
    C
    \|\mathbb{P}_Y \Psi\|_{L^2(Y)}^{1/2}
    \|\Psi\|_{H^2(Y)}^{1/2}.
\end{equation}
\end{lemma}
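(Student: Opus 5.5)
We use the Helmholtz decomposition on $Y$, $\Psi = \mathbb{P}_Y\Psi + \nabla\phi$, where $\nabla\phi$ is the gradient part. Since $\oprot\nabla\phi = 0$, we get $\oprot\Psi = \oprot(\mathbb{P}_Y\Psi)$ in $\mathcal D'(Y)$. So it suffices to bound $\|\oprot w\|_{L^2(Y_0)}$ for $w := \mathbb{P}_Y\Psi$ by $\|w\|_{L^2(Y)}^{1/2}\|\Psi\|_{H^2(Y)}^{1/2}$.

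First, $w$ is only in $L^2$ up to $\partial Y$, but the interior regularity is governed by $\Psi$. Indeed, $\phi$ solves the Neumann problem $\Delta\phi = \opdiv\Psi$ in $Y$, $\partial_n\phi = \Psi\cdot n$ on $\partial Y$. By interior elliptic regularity on an intermediate Lipschitz subdomain $Y_1$ with $Y_0\Subset Y_1\Subset Y$,
\[
\|\nabla\phi\|_{H^2(Y_1)} \le C\big(\|\opdiv\Psi\|_{H^1(Y)} + \|\nabla\phi\|_{L^2(Y)}\big) \le C\|\Psi\|_{H^2(Y)}.
\]
Here we used that $\|\nabla\phi\|_{L^2(Y)}\le\|\Psi\|_{L^2(Y)}$, since the Helmholtz decomposition is orthogonal. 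Hence $\|w\|_{H^2(Y_1)}\le C\|\Psi\|_{H^2(Y)}$. In fact only $\|w\|_{H^1(Y_1)}$ or $\|w\|_{H^2(Y_1)}$ is needed below.

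Next comes the localized interpolation. Take a cutoff $\chi\in C_c^\infty(Y_1)$ with $\chi=1$ on $Y_0$. Then
\[
\|\oprot w\|_{L^2(Y_0)} \le \|\nabla(\chi w)\|_{L^2(\R^3)} + C\|w\|_{L^2(Y_1)}.
\]
The function $\chi w$ is compactly supported in $\R^3$, so Fourier/Plancherel interpolation $\|\nabla g\|_{L^2}\le \|g\|_{L^2}^{1/2}\|\Delta g\|_{L^2}^{1/2}$ gives
\[
\|\nabla(\chi w)\|_{L^2} \le C\|w\|_{L^2(Y)}^{1/2}\|w\|_{H^2(Y_1)}^{1/2} \le C\|w\|_{L^2(Y)}^{1/2}\|\Psi\|_{H^2(Y)}^{1/2}.
\]
The lower-order term satisfies $\|w\|_{L^2}=\|w\|_{L^2}^{1/2}\|w\|_{L^2}^{1/2}\le \|\mathbb{P}_Y\Psi\|_{L^2(Y)}^{1/2}\|\Psi\|_{H^2(Y)}^{1/2}$, again using $\|w\|_{L^2(Y)}\le\|\Psi\|_{L^2(Y)}$. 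Combining these bounds gives \eqref{est.lem.vor.interp}.

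The main technical point is the interior regularity step for the Neumann potential. If $Y$ is disconnected, we apply the argument componentwise, as in the definition of $\mathbb P_Y$. The Lipschitz boundary only affects the global $L^2$ bound, which is automatic from orthogonality. Interior $H^3$ regularity of $\phi$ is standard, since $\Delta\phi\in H^1(Y)$ and $\phi\in H^1(Y)$.
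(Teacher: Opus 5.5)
Your proof is correct, but it takes a different route from the paper's.

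\textbf{The paper's argument.} The paper never asks for regularity of $b=\mathbb{P}_Y\Psi$ beyond $L^2$. It uses $\oprot\Psi=\oprot b$ and tests against $\zeta\in H^1_0$, moving the curl onto $\zeta$. This gives the negative-norm bound $\|\oprot\Psi\|_{H^{-1}(Y_0)}\le C\|\mathbb{P}_Y\Psi\|_{L^2(Y)}$. The trivial bound $\|\oprot\Psi\|_{H^1(Y_0)}\le C\|\Psi\|_{H^2(Y)}$ comes directly from $\Psi$. The two are then combined by $H^{-1}$--$H^1$ interpolation on $Y_0$. All the regularity is carried by $\Psi$ itself, and no elliptic theory is used.

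\textbf{Your argument.} You move the regularity onto $w=\mathbb{P}_Y\Psi$. You apply interior elliptic regularity to the gradient part, using $\Delta\nabla\phi=\nabla\opdiv\Psi\in L^2$ and $\|\nabla\phi\|_{L^2}\le\|\Psi\|_{L^2}$ by orthogonality. This gives the interior bound $\|w\|_{H^2(Y_1)}\le C\|\Psi\|_{H^2(Y)}$, which is a somewhat stronger intermediate fact than the lemma needs. You then interpolate $L^2$--$H^2$ for the compactly supported field $\chi w$ by Plancherel.

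\textbf{Comparison.} Your route costs one extra ingredient, the interior regularity of the Neumann potential. In exchange, the interpolation becomes completely elementary on $\R^3$. It also sidesteps a small point the paper glosses over. The paper's final inequality $\|f\|_{L^2(Y_0)}\le C\|f\|_{H^{-1}(Y_0)}^{1/2}\|f\|_{H^1(Y_0)}^{1/2}$ pairs the dual of $H^1_0(Y_0)$ with $H^1(Y_0)$ rather than $H^1_0(Y_0)$. So it is not literally the standard interpolation identity. It is true, but it requires a boundary-strip argument or a cutoff onto an intermediate domain, which is essentially your $\chi$.

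\textbf{Cosmetic remarks.} Your first localized inequality needs a constant in front of $\|\nabla(\chi w)\|_{L^2(\R^3)}$, since pointwise $|\oprot v|\le\sqrt{2}|\nabla v|$. Alternatively, use the whole-space identity $\|\oprot v\|_{L^2(\R^3)}\le\|\nabla v\|_{L^2(\R^3)}$ for compactly supported $v$. With either, the lower-order term $C\|w\|_{L^2(Y_1)}$ becomes unnecessary.
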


\begin{proof}
Set $b=\mathbb{P}_Y \Psi$. By the Helmholtz decomposition in $Y$, there exists $\chi\in H^1(Y)$ such that $\Psi = b + \nabla \chi$. Thus $\oprot\Psi = \oprot b$ in $Y$ in the sense of distributions.

Let $\zeta\in H^1_0(Y)^3$. Then we have 
\[
\begin{split}
    \Big|
    \langle 
    \oprot\Psi,
    \zeta
    \rangle_{H^{-1}(Y),H^1_0(Y)}
    \Big|
    &=
    \bigg|
    \int_Y b\cdot \oprot \zeta \dd x
    \bigg| \\
    &\le
    C
    \|b\|_{L^2(Y)}
    \|\zeta\|_{H^1(Y)}.
\end{split}
\]
Hence 
$
    \|\oprot\Psi\|_{H^{-1}(Y)}
    \le 
    C\|\mathbb{P}_Y\Psi\|_{L^2(Y)}
$.
If $\zeta\in H^1_0(Y_0)^3$, its zero
extension $\widetilde{\zeta}$ to $Y$ satisfies $\widetilde{\zeta}\in H^1_0(Y)^3$ and
$\|\widetilde{\zeta}\|_{H^1(Y)}=\|\zeta\|_{H^1(Y_0)}$. This implies that 
\begin{equation}\label{est1.prf.lem.vor.interp}
    \|\oprot\Psi\|_{H^{-1}(Y_0)}
    \le
    C
    \|\mathbb{P}_Y\Psi\|_{L^2(Y)}.
\end{equation}
On the other hand, since $\Psi\in H^2(Y)^3$,
\begin{equation}\label{est2.prf.lem.vor.interp}
    \|\oprot\Psi\|_{H^1(Y_0)}
    \le
    C
    \|\Psi\|_{H^2(Y)}.
\end{equation}
The standard interpolation inequality on $Y_0$ gives
\begin{equation}\label{est3.prf.lem.vor.interp}
    \|\oprot\Psi\|_{L^2(Y_0)}
    \le
    C
    \|\oprot\Psi\|_{H^{-1}(Y_0)}^{1/2}
    \|\oprot\Psi\|_{H^1(Y_0)}^{1/2}.
\end{equation}
The desired estimate \eqref{est.lem.vor.interp} follows from \eqref{est1.prf.lem.vor.interp}--\eqref{est3.prf.lem.vor.interp}.
\end{proof}

Next we state the stability estimate for the system \eqref{eq.lem.stab} below.

\begin{lemma}\label{lem.stab}
Let $G\subset\R^3$ be a bounded domain with smooth boundary, $G'$ be a subdomain of $G$ satisfying $\overbar{G'} \subset G$. Let $(\varphi,\omega)\in H^2_{{\rm loc}}(G)^6 \cap H^1(G)^6$ satisfy, for some $s\in H^1(G)$, 
\begin{equation}\label{eq.lem.stab}
    \left\{
    \begin{array}{ll}
    \lambda \varphi
    - \Delta \varphi
    + \omega \times \tilde{H}_*
    + \nabla s
    = 0
    & \mbox{in}\ G,\\
    \lambda \omega
    - \Delta \omega
    + \oprot
    (
    \tilde{H}_* \cdot\nabla \varphi 
    - (\nabla \tilde{H}_*)^\top \varphi
    )
    = 0
    & \mbox{in}\ G,\\
    \opdiv \varphi
    =
    \opdiv \omega
    = 0
    & \mbox{in}\ G.
    \end{array}
    \right.
\end{equation}
Suppose that, for $0<\eta<\mathcal{E}$, we have $\|(\varphi,\omega)\|_{H^1(G)} \le \mathcal{E}$ and $\|(\varphi,\omega)\|_{L^2(G')} \le \eta$. Then there exist $C,\mu>0$ independent of $\lambda, \eta, \mathcal{E}$ such that 
\begin{equation}\label{est.lem.stab}
    \|(\varphi,\omega)\|_{L^2(G)}
    \le 
    C 
    \exp
    (
    C \langle \lambda \rangle
    )\,
    \mathcal{E} 
    \bigg(\log \frac{\mathcal{E}}{\eta} \bigg)^{-\mu}. 
\end{equation}
In particular, if $(\varphi,\omega)=0$ in a nonempty open subset of $G$, then $(\varphi,\omega)=0$ in $G$.
\end{lemma}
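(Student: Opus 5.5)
We reduce the coupled system \eqref{eq.lem.stab} to a principally diagonal elliptic system with Laplacian leading part. The key point is that the coupling terms are of lower order: $\omega\times\tilde{H}_*$ is zeroth order in $\omega$, and $\oprot(\tilde{H}_*\cdot\nabla\varphi-(\nabla\tilde{H}_*)^\top\varphi)$ is second order in $\varphi$. The latter must be handled with care, since it is not lower order as written.

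\textbf{Step 1: reduction to a diagonal system.} Take the curl of the first equation to eliminate the pressure $s$. This gives, for $\zeta:=\oprot\varphi$,
\[
    \lambda\zeta-\Delta\zeta+\oprot(\omega\times\tilde{H}_*)=0 \quad\text{in } G,
\]
where the coupling is now first order in $\omega$ (with $W^{1,\infty}$ coefficients). In the $\omega$-equation, write
\[
    \oprot(\tilde{H}_*\cdot\nabla\varphi)=\tilde{H}_*\cdot\nabla\zeta+[\oprot,\tilde{H}_*\cdot\nabla]\varphi ,
\]
where the commutator involves only $\nabla\tilde{H}_*$ and $\nabla\varphi$. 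Since $\tilde{H}_*$ is only $W^{1,\infty}$, this term has to be interpreted weakly; we keep the $\omega$-equation in the divergence form
\[
    \lambda\omega-\Delta\omega+\oprot F=0,\qquad F\in L^2,\quad |F|\le C(|\nabla\varphi|+|\varphi|).
\]
Because $\opdiv\varphi=0$, we have $-\Delta\varphi=\oprot\zeta$. The triple $(\varphi,\zeta,\omega)$ therefore satisfies a system with Laplacian principal part, namely
\[
    -\Delta\varphi=\oprot\zeta,\qquad
    \lambda\zeta-\Delta\zeta=\text{(first order in }\omega),\qquad
    \lambda\omega-\Delta\omega=\text{(divergence-form first-order terms in }\varphi,\zeta).
\]
The coupling is diagonal at leading order with lower-order terms of order at most one, which is the setting where Carleman estimates for $-\Delta$ with gradient potentials apply.

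\textbf{Step 2: propagation of smallness.} To absorb $\lambda$, we use the standard trick of adding a variable: set
\[
    \Phi(x,y)=e^{\sqrt{\lambda}\,y}(\varphi,\omega)(x)
\]
(or work directly with $\lambda$ as a zeroth-order potential of size $|\lambda|$). We then apply a three-balls/Carleman inequality for the weakly coupled Laplace system with $L^\infty$ first-order coefficients, as in R\"uland--Salo. Chaining balls from $G'$ to any interior compact set yields a H\"older-type bound
\[
    \|(\varphi,\omega)\|_{L^2(K)}\le C e^{C\langle\lambda\rangle}\,\eta^{\theta}\mathcal{E}^{1-\theta},
\]
where the dependence on $\lambda$ arises from the zeroth-order potential. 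Since $\zeta$ is only controlled in $H^{-1}$ on $G'$ by the $L^2$ bound of $\varphi$, we first pass from $L^2$ to $H^1$ smallness on a slightly smaller set. Caccioppoli inequalities give
\[
    \|\nabla(\varphi,\omega)\|_{L^2(G'')}\le C\langle\lambda\rangle\,\|(\varphi,\omega)\|_{L^2(G')}
\]
up to the pressure, which we control via $\opdiv\varphi=0$ and local pressure estimates. Alternatively, we interpolate as in Lemma \ref{lem.vor.interp}, using $H^1$ bounds by $\mathcal{E}$.

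\textbf{Step 3: reaching up to the boundary.} To pass from interior compacts to all of $G$, we use the global bound $\|(\varphi,\omega)\|_{H^1(G)}\le\mathcal{E}$. For the strip $G\setminus K_r$ with $|G\setminus K_r|\lesssim r$, Hölder's inequality and the Sobolev embedding $H^1\subset L^6$ give
\[
    \|\cdot\|_{L^2(G\setminus K_r)}\le Cr^{1/3}\mathcal{E}.
\]
The interior constant degenerates like $e^{C/r}$ in the chaining. Optimizing in $r$ converts the Hölder bound into the logarithmic rate
\[
    \mathcal{E}\bigl(\log(\mathcal{E}/\eta)\bigr)^{-\mu}.
\]
Uniqueness follows by taking $\eta\to0$ on the open set where $(\varphi,\omega)$ vanishes.

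The main obstacle is Step 1–2: the $\omega$-equation contains second derivatives of $\varphi$ with only Lipschitz coefficients, so we must verify that after introducing $\zeta$ the system genuinely has Laplacian principal part with bounded first-order (divergence-form) perturbations. We must also obtain a Carleman estimate robust to $W^{1,\infty}$ coefficients with constants exponential in $\langle\lambda\rangle$. I would first try the $H^{-1}$-right-hand-side Carleman estimate of R\"uland--Salo directly on the $\omega$-equation in divergence form, coupled with standard estimates for $\varphi,\zeta$.
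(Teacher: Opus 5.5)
\textbf{The gap is in Step~3.} Chaining three-ball inequalities along balls of radius comparable to $r$ from $G'$ to $K_r$ takes $N\sim 1/r$ steps. What degenerates is the H\"older exponent, not just the constant: you get $\|(\varphi,\omega)\|_{L^2(K_r)}\le C_r\,\mathcal{E}\,(\eta/\mathcal{E})^{\theta^{C/r}}$ with $C_r$ at most polynomial in $1/r$. Balancing this against $r^{1/3}\mathcal{E}$ forces $r\gtrsim 1/\log\log(\mathcal{E}/\eta)$. So your argument only yields $\mathcal{E}\,(\log\log(\mathcal{E}/\eta))^{-1/3}$, a double-logarithmic rate.

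The form you assert (fixed exponent with an $e^{C/r}$ constant) would give a single logarithm, but it is false in general. Take $\tilde{H}_*=0$, $\lambda=0$, $\varphi=0$, $\omega=\nabla h$ with $h$ a homogeneous harmonic polynomial of large degree, on $B_1$ with smallness on $B_{1/2}$. This shows that any admissible exponent on $B_{1-r}$ is at most of order $r$.

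The missing ingredient is the propagation argument of Alessandrini--Rondi--Rosset--Vessella, which the paper invokes after its three-sphere estimate. Using the interior cone condition of the smooth boundary, you reach a point at distance $r$ from $\partial G$ through balls whose radii are comparable to their distance to $\partial G$ and shrink geometrically. Only $O(\log(1/r))$ steps are then needed, so the exponent is $\theta^{C\log(1/r)}=r^{\beta}$. Balancing $\mathcal{E}\exp(-r^{\beta}\log(\mathcal{E}/\eta))$ against $r^{1/3}\mathcal{E}$ gives $\mathcal{E}(\log(\mathcal{E}/\eta))^{-\mu}$. This needs the three-ball inequality on small balls with a scale-independent $\theta$ and constant $e^{C\langle\lambda\rangle}$. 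Your elliptic Carleman argument provides this: rescaling a ball of radius $\rho\le 1$ to unit size does not enlarge any coupling coefficient of the $(\varphi,\zeta,\omega)$ system, and it replaces $\lambda$ by $\rho^2\lambda$.

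\textbf{Comparison with the paper.} Your Step~1 is the same reduction the paper uses: $\zeta=\oprot\varphi$, $-\Delta\varphi=\oprot\zeta$, and couplings written as $\opDiv$ of matrices bounded by $C|\omega|$ and $C(|\nabla\varphi|+|\varphi|)$. The paper then absorbs $\lambda$ differently. It sets $(\Phi,\Omega,S)=e^{t\lambda}(\varphi,\omega,s)$ on $G\times(-1,1)$, so $\lambda$ disappears from the equations: one gets $\Delta\Phi+\oprot Q=0$ plus heat equations for $(Q,\Omega)$. It then applies the Lin--Wang parabolic/elliptic three-cylinder inequality with $\lambda$-independent $\theta$. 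The parameter $\lambda$ re-enters only through the ratio of $\int e^{2t\Re\lambda}\dd t$ over $(-T,T)$ and $(-T+t_0,T-t_0)$, i.e.\ a factor $e^{C|\Re\lambda|}$.

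Your alternative of treating $\lambda$ as a complex potential, with Carleman parameter $\tau\gtrsim|\lambda|^{2/3}$, is more elementary. It needs only elliptic Carleman estimates and gives $\lambda$-independent $\theta$ with constant $e^{C|\lambda|^{2/3}}\le e^{C\langle\lambda\rangle}$. Your first suggestion, the lift $e^{\sqrt{\lambda}y}$, has the wrong sign; you need $e^{\pm\ii\sqrt{\lambda}y}$. Moreover, $-\Delta\varphi=\oprot\zeta$ contains no $\lambda$, so the lift reintroduces $\lambda$ as a potential there anyway. The potential route is therefore the one to use.

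Finally, what you flag as the main obstacle is harmless. By your commutator identity, the $\omega$-equation reads $\lambda\omega-\Delta\omega+\tilde{H}_*\cdot\nabla\zeta+\mathcal{R}-\oprot\big((\nabla\tilde{H}_*)^\top\varphi\big)=0$ with $|\mathcal{R}|\le C|\nabla\varphi|$, keeping the last term in divergence form. So the $(\varphi,\zeta,\omega)$ system has only first-order $L^\infty$ couplings and zeroth-order divergence-form couplings. The $L^2$ and $H^{-1}$ Carleman estimates absorb these for large $\tau$.
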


\begin{proof}
Set
\[
    (\Phi(x,t),\Omega(x,t))
    =
    \exp(t\lambda) (\varphi(x),\omega(x)),
    \qquad
    S(x,t)
    = 
    \exp(t\lambda) s(x)
\]
on $G\times(-1,1)$. Then we see that
\begin{equation}\label{eq1.prf.lem.stab}
    \left\{
    \begin{array}{ll}
    \partial_t \Phi
    - \Delta \Phi
    + \Omega \times \tilde{H}_*
    + \nabla S
    = 0
    & \mbox{in}\ G\times(-1,1),\\
    \partial_t \Omega
    - \Delta \Omega
    + \oprot
    (
    \tilde{H}_* \cdot\nabla \Phi
    - (\nabla \tilde{H}_*)^\top \Phi
    )
    = 0
    & \mbox{in}\ G\times(-1,1),\\
    \opdiv \Phi
    =
    \opdiv \Omega
    = 0
    & \mbox{in}\ G\times(-1,1).
    \end{array}
    \right.    
\end{equation}

We first derive a $3$-cylinder estimate for \eqref{eq1.prf.lem.stab}. Set $Q=\oprot \Phi$. Since $\opdiv\Phi=0$, 
\begin{equation}\label{eq2.prf.lem.stab}
    \Delta \Phi 
    + \oprot Q 
    = 0.
\end{equation}
Let $\mathcal{P}(a)$ denote the skew-symmetric matrix satisfying
$\opDiv\mathcal{P}(a)=\oprot a$ for a vector field $a$. Then, taking the rotation of the first equation of \eqref{eq1.prf.lem.stab} gives
\begin{equation}\label{eq3.prf.lem.stab}
    \left\{
    \begin{array}{ll}
    \partial_t Q
    - \Delta Q
    + \opDiv F
    = 0
    & \mbox{in}\ G\times(-1,1),\\
    \partial_t \Omega
    - \Delta \Omega
    + \opDiv G
    = 0
    & \mbox{in}\ G\times(-1,1),
    \end{array}
    \right.    
\end{equation}
where
\[
    F
    :=
    \mathcal{P}(\Omega\times\tilde{H}_*),
    \qquad
    G
    :=
    \mathcal{P}
    \big(
    \tilde{H}_*\cdot\nabla\Phi
    -(\nabla\tilde{H}_*)^\top\Phi
    \big).
\]
Since $\tilde{H}_*\in W^{1,\infty}(\R^3)^3$, we have 
\[
    |F| + |G|
    \le
    C
    (
    |\Omega|+|\nabla\Phi|+|\Phi|
    ).
\]
The system \eqref{eq2.prf.lem.stab}--\eqref{eq3.prf.lem.stab} has the same diagonal principal part as the generalized non-stationary Stokes system in \cite{LinWang2022}, while the remaining terms are bounded first- and zero-order perturbations allowed in the argument of \cite{LinWang2022}. Hence, applying the parabolic and elliptic Carleman estimates yields the following three-cylinder inequality: for $0<t_0<T<1$ and concentric balls $B_{R_1}\subset B_{R_2}\subset B_{R_3}\subset G$ under radius conditions,
\begin{equation}\label{est1.prf.lem.stab}
\begin{split}
    &\int_{-T+t_0}^{T-t_0}
    \int_{B_{R_2}}
    \big(
    |\Phi|^2+|\Omega|^2
    \big)
    \dd x\dd t \\
    &\le
    C
    \left(
    \int_{-T}^{T}
    \int_{B_{R_1}}
    \big(
    |\Phi|^2+|\Omega|^2
    \big)
    \dd x\dd t
    \right)^{\theta}
    \left(
    \int_{-T}^{T}
    \int_{B_{R_3}}
    \big(
    |\Phi|^2+|\Omega|^2
    \big)
    \dd x\dd t
    \right)^{1-\theta},
\end{split}
\end{equation}
where $\theta\in(0,1)$ depends on $G,t_0,T,R_1,R_2,R_3$ and $\|\tilde{H}_*\|_{W^{1,\infty}}$.

Returning to the time-independent field $(\varphi,\omega)$ by using 
\[
    \int_{-a}^{a}
    \int_{B_R}
    (|\Phi|^2+|\Omega|^2)
    \dd x\dd t
    =
    \bigg(
    \int_{-a}^{a}
    \exp(2t \Re\lambda)
    \dd t
    \bigg)
    \|(\varphi,\omega)\|_{L^2(B_R)}^2
\]
for $a\in(0,1)$ and ball $B_R\subset G$, gives the $3$-sphere estimate
\[
    \|(\varphi,\omega)\|_{L^2(B_{R_2})}
    \le
    C
    \exp
    (
    C \langle \lambda \rangle
    )\,
    \|(\varphi,\omega)\|_{L^2(B_{R_1})}^{\theta}
    \|(\varphi,\omega)\|_{L^2(B_{R_3})}^{1-\theta},
\]
for concentric balls $B_{R_1}\subset B_{R_2}\subset B_{R_3}\subset G$. The standard propagation-of-smallness argument of \cite{AlessandriniRondiRossetVessella2009} then yields \eqref{est.lem.stab}. The qualitative unique continuation follows by applying the estimate with $\eta\downarrow0$ along a chain of balls from the open set where $(\varphi,\omega)$ vanishes.
\end{proof}

We now proceed with the approximation of $V=(v,H)$ by a global solution under forcing supported in $Y$ in \eqref{def.Y}. Let $\mathcal{X}_{\rm restr}$ denote the subspace of $L^2(D)^6$
\begin{equation}\label{def.Xrestr}
    \mathcal{X}_{\rm restr}
    = 
    \overline{
    \{a|_D
    \mid
    a\in L^2_\sigma(\R^3)
    \}}^{L^2(D)^3}
    \times
    \overline{
    \{a|_D
    \mid
    a\in L^2_\sigma(\R^3)
    \}}^{L^2(D)^3}.
\end{equation}
Then we define $\mathcal{X}^{\rm tgt}$ as the space of target functions by 
\begin{equation}\label{def.Xtarget}
    \mathcal{X}^{\rm tgt}
    = 
    \Big\{V\in \mathcal{X}_{\rm restr}\cap H^2(D)^6
    ~\big|~ 
    \mbox{
    $V=(v,H)$ solves \eqref{eq.MHD.resol.loc} with $q\in H^1(D)$
    }
    \Big\}.
\end{equation}
Note that the normal trace of $(v,H)\in \mathcal{X}^{\rm tgt}$ satisfies 
\begin{equation}\label{eq.flux}
    \langle \gamma_n v,1\rangle_{\Gamma_\ell}
    =
    \langle \gamma_n H,1\rangle_{\Gamma_\ell}
    =
    0
\end{equation}
for every connected component $\Gamma_\ell$ of $\partial D$.

We also define the linear operator 
\[
    T: L^2(Y)^3 \times L^2_\sigma(Y) \to L^2(D)^6 
\]
as follows: for $F=(f,g)\in L^2(Y)^3 \times L^2_\sigma(Y)$, 
\begin{equation}\label{def.T.J}
    TF 
    = 
    \big(
    (\lambda + {\mathbb{S}}_{\R^3})^{-1} 
    J_Y F
    \big)\big|_D,
    \qquad
    J_Y F
    :=
    (
    \mathbb{P}_{\R^3} \mathtt{e}_Y f, 
    \mathtt{e}_Y g
    ). 
\end{equation}
Moreover, 
\begin{equation}\label{def.X}
    \mathcal{X}
    =
    \overbar{\opRan(T)}^{L^2(D)^6}.
\end{equation}
Let us denote by $\mathcal{K}$ the kernel of $T$ and by $\mathcal{K}^\perp$ the orthogonal space of $\mathcal{K}$ in $L^2(Y)^3 \times L^2_\sigma(Y)$. Finally, let us denote by $A$ the restriction of $T$ to $\mathcal{K}^\perp$.

\begin{lemma}\label{lem.A}
The following hold. 
\begin{enumerate}[(1)]
\item\label{item1.lem.A}
For $V=(v,H)\in \mathcal{X}$, set
\begin{equation}\label{def.Pi}
    (\varphi,\Psi)
    = 
    (\overbar{\lambda} + \mathbb{S}_{\R^3}^*)^{-1}
    \Pi_D V, 
    \qquad
    \Pi_D V
    := 
    (
    \mathbb{P}_{\R^3} \mathtt{e}_D v,    
    \mathbb{P}_{\R^3} \mathtt{e}_D H
    ). 
\end{equation}
Then the adjoint $A^*: \mathcal{X} \to \mathcal{K}^\perp$ of $A$ is given by 
\[
    A^* V 
    = 
    \big(
    \varphi|_{Y},
    \mathbb{P}_{Y}(\Psi|_{Y})
    \big).
\]

\item\label{item2.lem.A}
The operator $A^* A$ is positive, compact, and self-adjoint on $\mathcal{K}^\perp$.

\item\label{item3.lem.A}
The range of $A$ is dense in $\mathcal{X}$. Moreover, $\mathcal{X}^{\rm tgt}$ is contained in $\mathcal{X}$.

\item\label{item4.lem.A}
There exist orthonormal bases 
\[
    \{F_j\}_{j=1}^{\infty} \subset \mathcal{K}^\perp, 
    \qquad
    \{V_j\}_{j=1}^{\infty} \subset \mathcal{X} 
\]
and positive constants $\{\alpha_j\}_{j=1}^{\infty}$ with $\alpha_1\ge \alpha_2\ge\cdots>0$ and $\alpha_j\to0$ such that 
\[
    A F_j = \alpha_j V_j, 
    \qquad
    A^* V_j = \alpha_j F_j. 
\]
\end{enumerate}
\end{lemma}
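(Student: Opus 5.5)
The plan is to prove the four items in order. Items (1), (2) and (4) are essentially functional-analytic. The substance lies in the second half of (3), where unique continuation (Lemma \ref{lem.stab}) and the vorticity interpolation (Lemma \ref{lem.vor.interp}) enter.

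For \eqref{item1.lem.A}, fix $F=(f,g)\in\mathcal K^\perp$ and $V\in\mathcal X$, and set $U=(\lambda+\mathbb S_{\R^3})^{-1}J_YF$.
\begin{itemize}
\item Since $U\in L^2_\sigma(\R^3)^2$, the gradient part of $\mathtt e_D V$ is invisible, so $\langle TF,V\rangle_D=\langle U,\mathtt e_DV\rangle_{\R^3}=\langle U,\Pi_DV\rangle_{\R^3}$.
\item Taking the adjoint of the resolvent (sectoriality of $\mathbb S^*_{\R^3}$ from Remark \ref{rem.smallness} and Lemma \ref{lem.adjoint}) gives $\langle J_YF,(\varphi,\Psi)\rangle_{\R^3}$.
\item Because $\varphi$ is divergence-free on $\R^3$, $\langle\mathbb P_{\R^3}\mathtt e_Yf,\varphi\rangle=\langle f,\varphi|_Y\rangle_Y$.
\item Because $g\in L^2_\sigma(Y)$, $\langle g,\Psi|_Y\rangle_Y=\langle g,\mathbb P_Y(\Psi|_Y)\rangle_Y$.
\end{itemize}
The resulting element $(\varphi|_Y,\mathbb P_Y(\Psi|_Y))$ is orthogonal to $\mathcal K$, since its pairing with $F'\in\mathcal K$ equals $\langle TF',V\rangle_D=0$. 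Hence it equals $A^*V$.

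For \eqref{item2.lem.A}, note that $\langle A^*AF,F\rangle=\|AF\|^2>0$ for $0\ne F\in\mathcal K^\perp$, because $A$ is injective there. Self-adjointness is immediate. For compactness, the resolvent maps $L^2$ boundedly into $D(\mathbb S_{\R^3})\subset H^2(\R^3)^6$, and restriction to the bounded set $D$ is compact by Rellich. For \eqref{item4.lem.A}, apply the spectral theorem to the compact positive operator $A^*A$ on $\mathcal K^\perp$: take eigenvectors $F_j$ with eigenvalues $\alpha_j^2$, and set $V_j:=\alpha_j^{-1}AF_j$. Then $\{V_j\}$ is orthonormal and spans $\overline{\opRan A}$, which is $\mathcal X$ by \eqref{item3.lem.A}.

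For \eqref{item3.lem.A}, the density of $\opRan A$ in $\mathcal X$ is trivial, since $\opRan A=\opRan T$. The second claim is that every $V\in\mathcal X^{\rm tgt}$ is orthogonal to every $W\in L^2(D)^6$ with $W\perp\opRan T$. I would argue as follows.
\begin{enumerate}[(a)]
\item Set $(\varphi,\Psi)=(\overbar\lambda+\mathbb S^*_{\R^3})^{-1}\Pi_DW$. The computation of \eqref{item1.lem.A} gives $\varphi|_Y=0$ and $\mathbb P_Y(\Psi|_Y)=0$.
\item By Lemma \ref{lem.vor.interp}, $\omega:=\oprot\Psi=0$ on every $Y_0\Subset Y$.
\item Outside $\overbar D$ the forcing $\Pi_DW$ vanishes. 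Taking the curl of the $\Psi$-equation of the adjoint system (with $\overbar\lambda$ in place of $\lambda$) shows that $(\varphi,\omega)$ solves \eqref{eq.lem.stab} there, up to the sign/conjugation conventions, which do not affect Lemma \ref{lem.stab}.
\item By hypothesis every component of $B'\setminus\overbar D$ meets $Y$. Applying Lemma \ref{lem.stab} qualitatively along chains of balls then gives $\varphi=0$ and $\oprot\Psi=0$ in $\R^3\setminus\overbar D$.
\item Hence $\Psi=\nabla\zeta$ there with $\zeta$ harmonic, and the pressure of the $\varphi$-equation is locally constant.
\end{enumerate}
Next, I integrate by parts in $D$, pairing the equations for $V$ in \eqref{eq.MHD.resol.loc} against $(\varphi,\Psi)$. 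The result is that $\langle V,W\rangle_D$ reduces to boundary integrals on $\partial D$ of traces of $(\varphi,\oprot\Psi,\Psi\cdot n,\text{pressure})$. These traces are continuous across $\partial D$ since $(\varphi,\Psi)\in H^2$, so their exterior values can be used. All terms involving $\varphi$ and $\oprot\Psi$ vanish.

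What survives are terms of the form $\int_{\Gamma_\ell}c_\ell\,v\cdot n$, $\int_{\Gamma_\ell}c_\ell\,H\cdot n$ and $\int_{\partial D}\zeta\,H\cdot n$, where the $c_\ell$ are constants. The constant-weight terms vanish by the flux condition \eqref{eq.flux}. The step I expect to be the main obstacle is the remaining term $\int_{\partial D}\zeta\,H\cdot n$, because $\zeta$ need not be constant on each $\Gamma_\ell$. To handle it I would first try one of two routes.
\begin{itemize}
\item Use $V\in\mathcal X_{\rm restr}$: approximate $H$ by $a|_D$ with $a\in L^2_\sigma(\R^3)$ and transfer the gradient pairing to the exterior, where $\opdiv a=0$ and $\zeta$ is harmonic.
\item Extend $V$ to a divergence-free field on $\R^3$ using the divergence-free extension of Kato--Mitrea--Ponce--Taylor, which \eqref{eq.flux} permits. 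The troublesome gradient pairing then becomes $\langle\nabla\zeta,\cdot\rangle$ against a divergence-free field on the exterior, and the decay of $\zeta$ at infinity on the unbounded component closes the argument.
\end{itemize}
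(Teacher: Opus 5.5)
Your items (1), (2), (4) and steps (a)--(e) of (3) match the paper. The gap is in the boundary step of (3), and it is a real one.

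First, your inventory of surviving boundary terms is wrong. On $\partial D$ only $\varphi$, $\nabla\varphi$ and $\oprot\Psi$ have vanishing traces; $\Psi$ itself does not. The rot Green formula for the magnetic block, plus the coupling term (combined using $H_*\cdot n=0$), leaves
\[
\int_{\partial D}(Z\times n)\cdot\overline{\Psi}\,d\sigma,\qquad Z:=\oprot H+H_*\times v .
\]
This term involves the tangential trace of $\Psi$, not $\Psi\cdot n$. It is nonzero in general, and you have discarded it.

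Without this term, the leftover $\int_{\partial D}\overline{\zeta}\,H\cdot n\,d\sigma$ is a genuine obstruction. The potential $\zeta$ is in general not constant on the $\Gamma_\ell$; already for $H_*=0$ one can build $W\in\mathcal X^\perp$ whose exterior potential has nonconstant boundary values. So this integral is generally nonzero.

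Both of your routes are circular. For a compactly supported divergence-free extension $\tilde H$ of $H$, integrating by parts over $\R^3\setminus\overline D$ only gives
\[
\int_{\partial D}(H\cdot n)\overline{\zeta}\,d\sigma=-\int_{\R^3\setminus\overline D}\tilde H\cdot\overline{\Psi}\,dx .
\]
Divergence-freeness of $\tilde H$ and harmonicity or decay of $\zeta$ merely move the same quantity between $\partial D$ and the exterior; they never make it vanish. The same happens with approximants $a\in L^2_\sigma(\R^3)$ coming from $\mathcal X_{\rm restr}$.

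The missing idea is that the dropped term cancels the $\zeta$-part exactly. This uses the \emph{interior} magnetic equation $\oprot Z=-\lambda H$ in $D$, together with the exterior relation $\overline{\lambda}\Psi+\nabla s_2=0$; the latter is also where your $\zeta$ comes from, namely $s_2=-\overline{\lambda}\zeta+c_\ell$. Indeed,
\[
\begin{aligned}
\int_{\partial D}(H\cdot n)\,\overline{s_2}\,d\sigma
&=\int_D H\cdot\nabla\overline{s_2}\,dx
=-\frac{1}{\lambda}\int_D\oprot Z\cdot\nabla\overline{s_2}\,dx\\
&=\frac{1}{\lambda}\int_{\partial D}(Z\times n)\cdot\nabla\overline{s_2}\,d\sigma
=-\int_{\partial D}(Z\times n)\cdot\overline{\Psi}\,d\sigma .
\end{aligned}
\]
The last step uses that only the tangential trace of $\nabla s_2$ enters. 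That trace may be evaluated from the exterior side, where $\nabla\overline{s_2}=-\lambda\overline{\Psi}$.

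Hence the $Z$-term and the entire $s_2$-term cancel. What remains is only $\sum_\ell\overline{c_\ell}\int_{\Gamma_\ell}v\cdot n\,d\sigma$, with $c_\ell$ the constant values of $s_1$, and this vanishes by \eqref{eq.flux}. This is how the paper closes (3).
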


\begin{proof}
We first prove (\ref{item1.lem.A}). Let $V=(v,H)\in \mathcal{X}$ and set
\[
    W
    =
    (\varphi,\Psi)
    =
    \big((\overbar{\lambda} + \mathbb{S}_{\R^3}^*)^{-1}\big)
    \Pi_D V.
\]
For every $F=(f,g)$ in $\mathcal{K}^\perp$, putting
\[
    U
    =
    (\lambda + {\mathbb{S}}_{\R^3})^{-1} 
    J_Y F, 
\]
we compute
\[
    \langle AF,V\rangle_D
    =
    \langle
    U|_D,
    V
    \rangle_D 
    =
    \langle
    U,
    \mathtt{e}_D V
    \rangle_{\R^3} 
    =
    \langle
    U,
    \Pi_D V
    \rangle_{\R^3} 
    =
    \langle J_Y F, W\rangle_{\R^3}.
\]
By the definition of $J_Y$, we see from $g\in L^2_\sigma(Y)$ that 
\[
\begin{split}
    \langle J_Y F, W\rangle_{\R^3}
    &=
    \langle
    \mathbb{P}_{\R^3} \mathtt{e}_Y f,
    \varphi
    \rangle_{\R^3}
    +
    \langle
    \mathtt{e}_Y g, 
    \Psi
    \rangle_{\R^3} \\
    &=
    \langle
    f,
    \varphi|_{Y}
    \rangle_{Y}
    +
    \langle
    g, 
    \mathbb{P}_{Y}(\Psi|_{Y})
    \rangle_{Y} \\
    &=
    \big\langle
    F,
    \big(
    \varphi|_{Y},
    \mathbb{P}_{Y}(\Psi|_{Y})
    \big)
    \big\rangle_{Y}.
\end{split}
\]
Moreover, if $G\in \mathcal{K}$, then $TG=0$, and the same computation gives
\[
    \big\langle
    G,
    \big(
    \varphi|_{Y},
    \mathbb{P}_{Y}(\Psi|_{Y})
    \big)
    \big\rangle_{Y}
    =
    \langle TG, V\rangle_D
    = 0.
\]
Thus 
$
    \big(
    \varphi|_{Y},
    \mathbb{P}_{Y}(\Psi|_{Y})
    \big)
$ 
belongs to $\mathcal{K}^\perp$, and the formula for $A^* V$ follows.

For (\ref{item2.lem.A}), the resolvent estimate gives
$
    \|TF\|_{H^2(D)}
    \le
    C
    \|F\|_{L^2(Y)}
$
for $\lambda$ in the fixed sector with $|\lambda| \ge \rho_0$ with $C$ depending on $\rho_0$. By the Rellich theorem, $A$ is compact from $\mathcal{K}^\perp$ into $\mathcal{X}\subset L^2(D)^6$. Consequently $A^* A$ is positive, compact, and self-adjoint on $\mathcal{K}^\perp$.

We now prove (\ref{item3.lem.A}). The density is due to the definition \eqref{def.X}. Next we show that $\mathcal{X}^{\rm tgt}\subset\mathcal{X}$. For $V=(v,H)\in L^2(D)^6$ orthogonal to $\mathcal{X}$, let $(\varphi,\Psi)$ be defined by \eqref{def.Pi}. Since the computation used in the proof of (\ref{item1.lem.A}) is valid for every
$F\in L^2(Y)^3\times L^2_\sigma(Y)$, 
\[
    0
    =
    \langle TF,V\rangle_D 
    =
    \big\langle
    F,
    \big(
    \varphi|_{Y},
    \mathbb{P}_{Y}(\Psi|_{Y})
    \big)
    \big\rangle_{Y}. 
\]
Consequently,
\[
    \varphi|_Y=0,
    \qquad
    \mathbb{P}_Y(\Psi|_Y)=0.
\]
Set 
\[
    \omega=\oprot\Psi. 
\]
Then the second equality implies that $\Psi|_Y$ is locally of gradient form, and hence $\omega|_Y=0$. Let $\mathcal{O}_0,\ldots,\mathcal{O}_M$ be the connected components of $B'\setminus\overbar D$, and choose nonempty subdomains $Y_k\Subset Y\cap\mathcal{O}_k$. Since the forcing $\mathtt{e}_D V$ vanishes in $\mathcal{O}_k$, the pair $(\varphi,\omega)$ satisfies \eqref{eq.lem.stab}. Lemma \ref{lem.stab} therefore gives $(\varphi,\omega)=0$ in $\mathcal{O}_k$ for every $k$, and thus 
\[
    \varphi|_{B'\setminus\overbar{D}}=0,
    \qquad
    \omega|_{B'\setminus\overbar{D}}=0.
\]

We claim that every $U\in \mathcal{X}^{\rm tgt}$ is orthogonal to the subspace $\mathcal{X}^{\perp}$. We may show that $\langle U,V\rangle_D=0$ holds for every $V\in\mathcal{X}^\perp$, and therefore conclude that $U\in(\mathcal{X}^\perp)^\perp=\mathcal{X}$. Let $U=(u,B)$ be associated with a pressure $p\in H^1(D)$, that is, $U=(u,B)$ satisfies
\begin{equation}\label{eq0.prf.lem.A}
\begin{split}
    \left\{
    \begin{array}{ll}
    \lambda u - \Delta u 
    - H_*\cdot\nabla B - B\cdot\nabla H_* 
    + \nabla p
    = 0&\mbox{in}\ D, \\
    \lambda B + \oprot (\oprot B) + \oprot (H_* \times u)
    = 0&\mbox{in}\ D, \\
    \opdiv u
    = \opdiv B 
    = 0&\mbox{in}\ D.
    \end{array}\right.
\end{split}
\end{equation}
On the other hand, there exist $s_1,s_2\in H^1(B')$ such that
\begin{equation}\label{eq1.prf.lem.A}
\begin{split}
    \left\{
    \begin{array}{ll}
    \overbar{\lambda} \varphi
    - \Delta\varphi
    + (\oprot\Psi)\times\tilde{H}_*
    + \nabla s_1
    = \mathtt{e}_D v &\mbox{in}\ B', \\
    \overbar{\lambda} \Psi
    + \oprot \omega
    + 
    \tilde{H}_*\cdot\nabla\varphi
    - (\nabla\tilde{H}_*)^\top\varphi
    + \nabla s_2
    = \mathtt{e}_D H &\mbox{in}\ B', \\
    \opdiv \varphi
    = \opdiv \Psi
    = 0&\mbox{in}\ B'.
    \end{array}\right. 
\end{split}
\end{equation}
Blockwise application of the Green formula to \eqref{eq0.prf.lem.A}--\eqref{eq1.prf.lem.A} gives
\begin{equation}\label{eq2.prf.lem.A}
    \langle U,V\rangle_D
    =
    \mathcal{B}_{{\rm Stokes}}(u,p,\varphi)
    + \mathcal{B}_{{\rm Mag}}(B,\Psi)
    + \mathcal{B}_{{\rm Coup}}(U,W)
    + \mathcal{B}_{{\rm pr},2}(B,s_2),
\end{equation}
where
\begin{equation}\label{def.B}
\begin{split}
    \mathcal{B}_{{\rm Stokes}}(u,p,\varphi)
    &:=
    \int_{\partial D}
    \Big(
    (\partial_n u - pn)\cdot \overbar{\varphi}
    + (u\cdot n) \overbar{s_1}
    - u\cdot \partial_n\overbar{\varphi}
    \Big)
    \dd \sigma,\\
    \mathcal{B}_{{\rm Mag}}(B,\Psi)
    &:=
    \int_{\partial D}
    \Big(
    (B\times n)\cdot \overbar{\omega}
    + (\oprot B\times n)\cdot \overbar{\Psi}
    \Big)
    \dd \sigma,\\
    \mathcal{B}_{{\rm Coup}}(U,W)
    &:=
    -\int_{\partial D}
    (u\cdot n)(H_*\cdot \overbar{\Psi})
    \dd \sigma,\\
    \mathcal{B}_{{\rm pr},2}(B,s_2)
    &:=
    \int_{\partial D}
    (B\cdot n)\,\overbar{s_2}\dd \sigma.
\end{split}
\end{equation}
The detailed derivation of \eqref{eq2.prf.lem.A} is given in Appendix \ref{appx.bdry.int}.

Because $(\varphi,\omega)=0$ in $B'\setminus \overbar{D}$ and $(\varphi,\Psi)$ is in $H^2(B')$, the trace of $(\varphi,\omega)$ and the first-order derivatives of $\varphi$ vanish on $\partial D$ when approached from the interior. Moreover, since $H_*\cdot n=0$ on $\partial D$, the vector identity $(a\times b)\times c=(a\cdot c)b-(b\cdot c)a$ gives 
\[
\begin{split}
    \oprot B\times n - (u\cdot n)H_*
    =
    (\oprot B + H_*\times u)\times n 
\end{split}
\]
on $\partial D$. Thus the equality \eqref{eq2.prf.lem.A} is reduced to 
\begin{equation}\label{eq3.prf.lem.A}
\begin{split}
    \langle U,V\rangle_D 
    &=
    \int_{\partial D}
    (u\cdot n) \overbar{s_1}
    \dd \sigma \\
    &\quad
    + 
    \int_{\partial D}
    ((\oprot B + H_*\times u)\times n)\cdot \overbar{\Psi}\dd \sigma 
    + 
    \int_{\partial D}
    (B\cdot n)\,\overbar{s_2}\dd \sigma. 
\end{split}
\end{equation}

Let us show that the right-hand side is zero. In $B'\setminus\overbar{D}$, we see from \eqref{eq1.prf.lem.A} that 
\begin{equation}\label{eq4.prf.lem.A}
    \nabla s_1=0,
    \qquad
    \overbar{\lambda} \Psi + \nabla s_2=0.
\end{equation}
Thus $s_1$ is constant on each connected component of $B'\setminus\overbar{D}$, and the first term on the right-hand side of \eqref{eq3.prf.lem.A} vanishes by the zero-flux condition $\langle\gamma_n u,1\rangle_{\Gamma_\ell}=0$ on every component $\Gamma_\ell$ of $\partial D$ due to $(u,B)\in \mathcal{X}^{\rm tgt}$. Moreover, by the Green formula and $\opdiv B=0$, 
\[
    \int_{\partial D}
    (B\cdot n)\,\overbar{s_2}\dd \sigma
    = 
    \int_{D}
    B\cdot\nabla\overbar{s_2} \dd x.
\]
Then, by the second equation of \eqref{eq0.prf.lem.A} and the rot Green formula, 
\[
\begin{split}
    \int_{D}
    B\cdot\nabla\overbar{s_2} \dd x
    &=
    -\frac{1}{\lambda}
    \int_{D}
    \oprot(\oprot B + H_* \times u)
    \cdot\nabla\overbar{s_2} \dd x \\
    &=
    \frac{1}{\lambda}
    \int_{\partial D}
    (
    (\oprot B + H_* \times u)
    \times n
    )
    \cdot\nabla\overbar{s_2}
    \dd \sigma \\
    &=
    -\int_{\partial D}
    ((\oprot B + H_*\times u)\times n)\cdot \overbar{\Psi}\dd \sigma.
\end{split}
\]
Here we note that the boundary pairing only involves the tangential trace of $\nabla\overbar{s_2}$; using $\nabla\overbar{s_2}=-\lambda\overbar{\Psi}$ on the exterior of $\partial D$ gives the last equality. Hence we have 
\[
    \int_{\partial D}
    (
    (\oprot B + H_*\times u)\times n
    )
    \cdot\overbar{\Psi}\dd \sigma
    +
    \int_{\partial D}
    (B\cdot n)\,\overbar{s_2}\dd \sigma
    =0.
\]
Hence $\langle U,V\rangle_D=0$ for any $V\in\mathcal{X}^\perp$, and therefore $U\in(\mathcal{X}^\perp)^\perp=\mathcal{X}$.

Finally, (\ref{item4.lem.A}) follows from the spectral theorem applied to the compact positive self-adjoint operator $A^* A$ on $\mathcal{K}^\perp$. This completes the proof of Lemma \ref{lem.A}.
\end{proof}

A more detailed computation in the proof above yields the following lemma.

\begin{lemma}\label{lem.bdry.int}
There exists $C>0$ depending on $\rho_0$ in \eqref{assump.lambda} such that the following holds: let $V=(v,H)\in \mathcal{X}^{\rm tgt}$ be associated with a pressure $q\in H^1(D)$. For $\mathsf{E}=(\mathsf{E}^{(1)},\mathsf{E}^{(2)})\in L^2(D)^6$, using $\Pi_D$ in \eqref{def.Pi}, we set
\[
    (\varphi,\Psi)
    = 
    (\overbar{\lambda} + \mathbb{S}_{\R^3}^*)^{-1}
    \Pi_D \mathsf{E}. 
\]
Then 
\begin{equation}
\begin{split}
	|\langle V,\mathsf{E}\rangle_D|
	&\le
	C \langle\lambda\rangle
	\Big(
	\|V\|_{H^2(D)} + \|q\|_{H^1(D)}
	\Big) \\
    &\quad\times
	\|(\varphi,\Psi)\|_{H^2(B')}^{3/4}
	\|(\varphi,\oprot\Psi)\|_{L^2(B'\setminus\overbar{D})}^{1/4}.
\end{split}
\end{equation}
\end{lemma}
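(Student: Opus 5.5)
We start from the Green identity \eqref{eq2.prf.lem.A} of Lemma \ref{lem.A}, now with $\mathsf{E}$ in place of $V\in\mathcal{X}^\perp$. This identity expresses $\langle V,\mathsf{E}\rangle_D$ as a sum of boundary integrals over $\partial D$, with $(u,p,B)$ replaced by $(v,q,H)$. It holds because the adjoint system \eqref{eq1.prf.lem.A} only uses $(\varphi,\Psi)=(\overbar{\lambda}+\mathbb{S}_{\R^3}^*)^{-1}\Pi_D\mathsf{E}$ together with the local equations \eqref{eq.MHD.resol.loc}.

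Unlike the proof of Lemma \ref{lem.A}, the traces of $\varphi$, $\nabla\varphi$ and $\omega=\oprot\Psi$ no longer vanish on $\partial D$, so each term must be estimated quantitatively. We group the terms into two classes:
\begin{itemize}
\item terms paired with traces of $(\varphi,\nabla\varphi,\omega)$, namely $\mathcal{B}_{\rm Stokes}$ minus its $s_1$-part and the $(H\times n)\cdot\overbar\omega$ part of $\mathcal{B}_{\rm Mag}$;
\item the pressure/potential terms, namely the $s_1$-term, the $\Psi$-term $((\oprot H+H_*\times v)\times n)\cdot\overbar\Psi$ and the $s_2$-term.
\end{itemize}

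For the first class, we bound the $v,q,H$ factors by $\|V\|_{H^2(D)}+\|q\|_{H^1(D)}$ via the trace theorem. The traces of $\varphi,\nabla\varphi,\omega$ are taken from the exterior side, using $H^2(B')$-regularity to identify them with the interior traces. On the exterior side we use an interpolation trace inequality on a collar $N\subset B'\setminus\overbar D$: for $w\in H^1(N)$,
\[
\|w\|_{L^2(\partial D)}\le C\|w\|_{L^2(N)}^{1/2}\|w\|_{H^1(N)}^{1/2}.
\]
For $\nabla\varphi$ we also interpolate $\|\nabla\varphi\|_{L^2(N)}\le C\|\varphi\|_{L^2}^{1/2}\|\varphi\|_{H^2}^{1/2}$, which gives
\[
\|\nabla\varphi\|_{L^2(\partial D)}\le C\|\varphi\|_{L^2(N)}^{1/4}\|\varphi\|_{H^2(B')}^{3/4}.
\]
This is the source of the exponents $3/4$ and $1/4$; the weaker bounds for $\varphi,\omega$ themselves are absorbed since $\|\cdot\|_{L^2}\le\|\cdot\|_{H^2}$.

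For the second class, we repeat the cancellation argument of Lemma \ref{lem.A}, this time keeping the error terms. In $B'\setminus\overbar D$ the adjoint equations now read
\[
\nabla s_1=-\overbar\lambda\varphi+\Delta\varphi-\omega\times\tilde H_*,\qquad
\overbar\lambda\Psi+\nabla s_2=-\oprot\omega-\tilde H_*\cdot\nabla\varphi+(\nabla\tilde H_*)^\top\varphi,
\]
and both right-hand sides are small.

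\emph{The $s_1$-term.} On each exterior component $\mathcal{O}_k$ we subtract the mean $c_k$ of $s_1$ over a collar. By the zero flux \eqref{eq.flux} the $s_1$-term becomes $\int_{\partial D}(v\cdot n)\overbar{(s_1-c_k)}$. We bound $s_1-c_k$ by Poincaré and the trace theorem in terms of $\|\nabla s_1\|$ on the collar, with an interpolation to reach the $1/4$ power. The second derivatives of $\varphi$ are not small, so here we only use the factor $\|\varphi\|_{H^2}$ and interpolate the trace through $H^{-1}$-type bounds.

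\emph{The $\Psi$- and $s_2$-terms.} We run the computation leading to \eqref{eq3.prf.lem.A} once more. The tangential trace of $\nabla s_2$ now equals $-\overbar\lambda\Psi$ plus the tangential trace of the error $-\oprot\omega-\tilde H_*\cdot\nabla\varphi+(\nabla\tilde H_*)^\top\varphi$. After the division by $\lambda$ the two main contributions cancel as before, and the remaining error pairs $(\oprot H+H_*\times v)\times n$ with this tangential error trace. We estimate it by duality in $H^{-1/2}(\partial D)$, using the trace of $\oprot\omega$ and $\nabla\varphi$ and again the interpolation above. The factor $\langle\lambda\rangle$ comes from the $\overbar\lambda\varphi$ term and from $\lambda B$ in the rot-Green step.

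The main obstacle is the $s_1$ and $s_2$ pressure terms. The pressures are determined only up to constants per component, and their control from $\|(\varphi,\omega)\|_{L^2}$ requires the interpolation through $\|(\varphi,\Psi)\|_{H^2}$ with exactly the $1/4$ exponent. I would first try to bound $\nabla s_j$ in $H^{-1}$ on the collar by the small quantity, and then interpolate against the $H^1$ bound coming from $H^2$-regularity.
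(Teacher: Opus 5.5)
Your treatment of the first class (via $L^2(\partial D)$ traces and collar interpolation instead of the paper's $H^{1/2}$--$H^{-1/2}$ duality, which is fine since $V\in H^2$, $q\in H^1$) is sound. So is your treatment of the $s_1$-term (zero flux, then $\|s_1-c\|_{L^2}\le C\|\nabla s_1\|_{H^{-1}}$ interpolated against the $H^1$ bound). The gap is in the $\Psi$- and $s_2$-terms. Your identity is correct: set $Z=\oprot H+H_*\times v$ and $\mathcal R:=\overbar\lambda\Psi+\nabla s_2=-\oprot\omega-\tilde H_*\cdot\nabla\varphi+(\nabla\tilde H_*)^\top\varphi$ outside $D$; then the two terms add up to $\lambda^{-1}\int_{\partial D}(Z\times n)\cdot\overbar{\mathcal R}\dd\sigma$. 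But this remainder cannot be shown small by duality as you propose. Because $\mathtt e_D\mathsf E^{(2)}$ is only $L^2$ and jumps across $\partial D$, $\Psi$ is $H^2$ and in general no better up to $\partial D$ from the exterior side. Hence $\oprot\omega=\oprot\oprot\Psi$ is merely $L^2$ there. It has no $L^2(\partial D)$ trace, so your $L^2$--$H^1$ trace interpolation does not apply to it. Its tangential trace exists in $H^{-1/2}(\partial D)$ only through bounds like $\|\mathcal R\|_{L^2(N)}+\|\oprot\mathcal R\|_{L^2(N)}$. Here $\|\oprot\oprot\Psi\|_{L^2(N)}$ is not small: only $\|\varphi\|_{L^2}$ and $\|\oprot\Psi\|_{L^2}$ are, while $\|\Psi\|_{H^2}$ is merely bounded. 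Since $Z\in H^1(D)$ puts $Z\times n$ only in $H^{1/2}(\partial D)$, you would need $\mathcal R\times n$ small in exactly $H^{-1/2}(\partial D)$, and the available bounds do not give that. Your fallback, bounding $\nabla s_2$ in $H^{-1}$ by the small quantity, fails for a more basic reason. We have $\nabla s_2=-\overbar\lambda\Psi+\mathcal R$, and $\Psi$ itself need not be small outside $D$, since it can carry a large gradient component. For instance, in Lemma \ref{lem.A}, where $\omega=0$ outside $D$, one has $\Psi=-\nabla s_2/\overbar\lambda$ there, which is generally nonzero.

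The missing idea is to remove this gradient part before estimating, as the paper does. On each exterior component $\mathcal O_\ell$ write $\Psi=\Psi_\ell+\nabla\chi_\ell$ with $\Psi_\ell=\mathbb P_{\mathcal O_\ell}(\Psi|_{\mathcal O_\ell})$. Apply the rot Green formula only to the gradient part: using $\oprot Z=-\lambda H$ in $D$, $\int_{\Gamma_\ell}(Z\times n)\cdot\nabla\overbar{\chi_\ell}\dd\sigma=\lambda\int_{\Gamma_\ell}(H\cdot n)\overbar{\chi_\ell}\dd\sigma$. This moves the tangential derivative onto $Z$ rather than onto a non-small exterior quantity. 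With $r_\ell=s_2+\overbar\lambda\chi_\ell$ one gets $\overbar\lambda\Psi_\ell+\nabla r_\ell=F_\ell$ in $\mathcal O_\ell$. Here $\|F_\ell\|_{H^{-1}(\mathcal O_\ell)}\le C\|(\varphi,\oprot\Psi)\|_{L^2(\mathcal O_\ell)}$, because in $H^{-1}$ the term $\oprot\oprot\Psi$ is small. Projecting and using $|\lambda|\ge\rho_0$ (this is where the $\rho_0$-dependence of $C$ enters) gives $\Psi_\ell$ small in $H^{-1}$ and $r_\ell-c_\ell$ small in $L^2$. Interpolating with the $H^2$/$H^1$ bounds then controls $\|\Psi_\ell\|_{H^{1/2}(\Gamma_\ell)}+\|r_\ell-c_\ell\|_{H^{-1/2}(\Gamma_\ell)}$ with the $3/4$--$1/4$ exponents, and the zero flux of $H$ removes $c_\ell$. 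In fact your remainder equals $\sum_\ell\big(\int_{\Gamma_\ell}(Z\times n)\cdot\overbar{\Psi_\ell}\dd\sigma+\int_{\Gamma_\ell}(H\cdot n)\overbar{(r_\ell-c_\ell)}\dd\sigma\big)$. So the smallness is present, but it only becomes visible after this decomposition.
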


\begin{proof}
Observe that there exist $s_1,s_2\in H^1(B')$ such that $(\varphi,\Psi)$ satisfies
\begin{equation}\label{eq1.prf.lem.bdry.int}
\begin{split}
    \left\{
    \begin{array}{ll}
    \overbar{\lambda} \varphi
    - \Delta\varphi
    + (\oprot\Psi)\times\tilde{H}_*
    + \nabla s_1
    = \mathtt{e}_D \mathsf{E}^{(1)} &\mbox{in}\ B', \\
    \overbar{\lambda} \Psi
    + \oprot (\oprot \Psi)
    + 
    \tilde{H}_*\cdot\nabla\varphi
    - (\nabla\tilde{H}_*)^\top\varphi
    + \nabla s_2
    = \mathtt{e}_D \mathsf{E}^{(2)} &\mbox{in}\ B', \\
    \opdiv \varphi
    = \opdiv \Psi
    = 0&\mbox{in}\ B'.
    \end{array}\right. 
\end{split}
\end{equation}
Applying the Green formula, exactly as in the proof of Lemma \ref{lem.A}, gives
\begin{equation}\label{eq2.prf.lem.bdry.int}
    \langle V,\mathsf{E}\rangle_D
    =
    \mathcal{B}_{{\rm Stokes}}(v,q,\varphi)
    + \mathcal{B}_{{\rm Mag}}(H,\Psi)
    + \mathcal{B}_{{\rm Coup}}(V,W)
    + \mathcal{B}_{{\rm pr},2}(H,s_2),
\end{equation}
where the terms on the right-hand side are defined in \eqref{def.B} and $W:=(\varphi,\Psi)$.

Below in the proof, we use the following trace interpolation estimate. All traces on $\partial D$ are taken with respect to the outward normal $n$ of $D$:
\begin{equation}\label{est1.prf.lem.bdry.int}
\begin{split}
    &\|\varphi\|_{H^{1/2}(\partial D)}
    + \|\partial_n\varphi\|_{H^{-1/2}(\partial D)}
    + \|(\oprot\Psi)\times n\|_{H^{-1/2}(\partial D)} \\
    &\le
    C\|(\varphi,\Psi)\|_{H^2(B')}^{3/4}\|(\varphi,\oprot\Psi)\|_{L^2(B'\setminus\overbar{D})}^{1/4}.
\end{split}
\end{equation}
This follows from the trace theorem on each connected component of $B'\setminus\overbar{D}$, interpolations, and an inequality $\|\oprot\Psi\|_{H^1(B'\setminus\overbar{D})}\le C\|\Psi\|_{H^2(B')}$.

We first estimate the Stokes term in \eqref{eq2.prf.lem.bdry.int}. From the equation, we see that 
\[
    \|\partial_n v-qn\|_{H^{-1/2}(\partial D)}
    +
    \|v\|_{H^{1/2}(\partial D)}
    \le
    C\langle\lambda\rangle
    \Big(
    \|V\|_{H^1(D)} + \|q\|_{H^1(D)}
    \Big).
\]
The term involving $s_1$ is treated as follows. Let $\mathcal{O}_0,\dots,\mathcal{O}_M$ be the connected components of $B'\setminus \overbar{D}$ on which $\mathtt{e}_D \mathsf{E}$ vanishes, and let $c_k^{(1)}$ be the mean value of $s_1$ over $\mathcal{O}_k$. Since $V\in\mathcal{X}^{\rm tgt}$ is assumed, the zero-flux condition \eqref{eq.flux} leads to
\[
    \int_{\Gamma_\ell}
    (v\cdot n)\,
    \overline{s_1}
    \dd \sigma
    =
    \int_{\Gamma_\ell}
    (v\cdot n)\,
    \Big(\overbar{s_1-c_\ell^{(1)}}\Big)
    \dd \sigma.
\]
In $\mathcal{O}_\ell$, the first equation of \eqref{eq1.prf.lem.bdry.int} reduces to
\[
    \nabla s_1
    =
    -\overbar{\lambda}\varphi
    + \Delta\varphi
    - (\oprot\Psi)\times \tilde{H}_*.
\]
Thus, by the Poincar\'{e} inequality and interpolation,
\[
\begin{split}
    \|s_1-c_\ell^{(1)}\|_{L^2(\mathcal{O}_\ell)}
    &\le
    C\|\nabla s_1\|_{H^{-1}(\mathcal{O}_\ell)} \\
    &\le
    C\langle\lambda\rangle
    \|(\varphi,\Psi)\|_{H^2(B')}^{1/2}
    \|(\varphi,\oprot\Psi)\|_{L^2(B'\setminus\overbar{D})}^{1/2},
\end{split}
\]
while the equation also gives
\[
    \|s_1-c_\ell^{(1)}\|_{H^1(\mathcal{O}_\ell)}
    \le
    C\langle\lambda\rangle
    \|(\varphi,\Psi)\|_{H^2(B')}.
\]
Interpolating these two estimates and taking the trace, we obtain
\[
    \sum_\ell
    \|s_1-c_\ell^{(1)}\|_{H^{-1/2}(\Gamma_\ell)}
    \le
    C\langle\lambda\rangle
    \|(\varphi,\Psi)\|_{H^2(B')}^{3/4}\|(\varphi,\oprot\Psi)\|_{L^2(B'\setminus\overbar{D})}^{1/4}.
\]
Hence, using \eqref{est1.prf.lem.bdry.int}, we have
\begin{equation}\label{est.stokes.prf.lem.bdry.int}
\begin{split}
    |\mathcal{B}_{{\rm Stokes}}(v,q,\varphi)|
    &\le
    C\langle\lambda\rangle
    \Big(
    \|V\|_{H^1(D)}
    +
    \|q\|_{H^1(D)}
    \Big) \\
    &\quad
    \times
    \|(\varphi,\Psi)\|_{H^2(B')}^{3/4}\|(\varphi,\oprot\Psi)\|_{L^2(B'\setminus\overbar{D})}^{1/4}.
\end{split}
\end{equation}

Next we estimate the magnetic boundary terms. Put
\[
    Z
    =
    \oprot H 
    + H_*\times v.
\]
The same computation as in the proof of Lemma \ref{lem.A} gives
\[
    (H_*\times v)\times n
    =
    -(v\cdot n)H_*
\]
on $\partial D$. Hence
\begin{equation}\label{eq3.prf.lem.bdry.int}
\begin{split}
    &\mathcal{B}_{{\rm Mag}}(H,\Psi)
    + \mathcal{B}_{{\rm Coup}}(V,W)
    + \mathcal{B}_{{\rm pr},2}(H,s_2) \\
    &=
    \int_{\partial D}
    (H\times n)\cdot \oprot\overbar{\Psi}\dd \sigma
    +
    \int_{\partial D}
    (Z\times n)\cdot \overbar{\Psi}\dd \sigma
    +
    \int_{\partial D}
    (H\cdot n)\overbar{s_2}\dd \sigma.
\end{split}
\end{equation}
The first term on the right-hand side of \eqref{eq3.prf.lem.bdry.int} is estimated by \eqref{est1.prf.lem.bdry.int}:
\begin{equation}\label{est2.prf.lem.bdry.int}
    \bigg|
    \int_{\partial D}
    (H\times n)\cdot\oprot\overbar{\Psi}\dd \sigma
    \bigg|
    \le
    C
    \|V\|_{H^2(D)}
    \|(\varphi,\Psi)\|_{H^2(B')}^{3/4}
    \|(\varphi,\oprot\Psi)\|_{L^2(B'\setminus\overbar{D})}^{1/4}.
\end{equation}
The last two terms in \eqref{eq3.prf.lem.bdry.int} are estimated as follows. Let $\mathcal{O}_0,\dots,\mathcal{O}_M$ be the connected components of $B'\setminus\overbar{D}$ again. For each $\ell$, the Helmholtz decomposition in $\mathcal{O}_\ell$ gives 
\[
    \Psi
    =
    \Psi_\ell
    + \nabla\chi_\ell,
    \qquad
    \Psi_\ell
    :=
    \mathbb{P}_{\mathcal{O}_\ell}(\Psi|_{\mathcal{O}_\ell}),
\]
with unique $\chi_\ell\in H^1(\mathcal O_\ell)$ up to additive constants. Let $\widetilde{\chi}_\ell\in H^1(D)$ be an extension of the trace of $\chi_\ell$ such that $\widetilde{\chi}_\ell=\chi_\ell$ on $\Gamma_\ell$ and $\widetilde{\chi}_\ell=0$ on the other connected components of $\partial D$. Since $Z\times n$ is tangential on $\partial D$, the boundary pairing below only depends on the tangential derivative of the trace of $\chi_\ell$. Hence, by the rot Green formula,
\[
\begin{split}
    \int_{\Gamma_\ell}
    (Z\times n)
    \cdot
    \nabla\overbar{\chi_\ell}
    \dd \sigma
    &=
    \int_{\partial D}
    (Z\times n)
    \cdot
    \nabla\overbar{\widetilde{\chi}_\ell}
    \dd \sigma \\
    &=
    -\int_D
    (\oprot Z)
    \cdot
    \nabla\overbar{\widetilde{\chi}_\ell}\dd x \\
    &=
    -\int_{\Gamma_\ell}
    (\oprot Z\cdot n)
    \overbar{\chi_\ell}\dd \sigma \\
    &=
    \lambda
    \int_{\Gamma_\ell}
    (H\cdot n)\overbar{\chi_\ell}\dd \sigma, 
\end{split}
\]
where $\oprot Z=-\lambda H$ in $D$ is used in the last line. Consequently,
\begin{equation}\label{eq4.prf.lem.bdry.int}
\begin{split}
    &\int_{\Gamma_\ell}
    (Z\times n)
    \cdot
    \overbar{\Psi}\dd \sigma
    +
    \int_{\Gamma_\ell}
    (H\cdot n)
    \overbar{s_2}
    \dd \sigma  \\
    &=
    \int_{\Gamma_\ell}
    (Z\times n)
    \cdot
    \overbar{\Psi_\ell}\dd \sigma
    +
    \int_{\Gamma_\ell}
    (H\cdot n)
    (\overbar{s_2} + \lambda\overbar{\chi_\ell})
    \dd \sigma.
\end{split}
\end{equation}
In $\mathcal O_\ell$, the second equation of \eqref{eq1.prf.lem.bdry.int} reduces to
\begin{equation}\label{eq5.prf.lem.bdry.int}
    \overbar{\lambda}\Psi_\ell
    + \nabla r_\ell
    =
    F_\ell,
\end{equation}
where
\[
    r_\ell
    :=
    s_2 + \overbar{\lambda}\chi_\ell,
    \qquad
    F_\ell
    :=
    -\big(
    \oprot\oprot\Psi
    +\tilde{H}_*\cdot\nabla\varphi
    -(\nabla\tilde{H}_*)^\top\varphi
    \big).
\]
Since $\tilde{H}_*\in W^{1,\infty}$ and $\opdiv\tilde{H}_*=0$, 
\[
    \|F_\ell\|_{H^{-1}(\mathcal O_\ell)}
    \le
    C\|(\varphi,\oprot\Psi)\|_{L^2(\mathcal O_\ell)}.
\]
Applying $\mathbb{P}_{\mathcal{O}_\ell}$ to \eqref{eq5.prf.lem.bdry.int}, we have
\[
    |\lambda|
    \|\Psi_\ell\|_{H^{-1}(\mathcal O_\ell)}
    \le
    C
    \|(\varphi,\oprot\Psi)\|_{L^2(\mathcal O_\ell)}.
\]
Thus, for $|\lambda|\ge\rho_0$, the Poincar\'{e} inequality and \eqref{eq5.prf.lem.bdry.int} give
\begin{equation}\label{est3.prf.lem.bdry.int}
\begin{split}
    \|\Psi_\ell\|_{H^{-1}(\mathcal O_\ell)}
    + \|r_\ell-c_\ell^{(2)}\|_{L^2(\mathcal O_\ell)}
    \le
    C\|(\varphi,\oprot\Psi)\|_{L^2(\mathcal O_\ell)},
\end{split}
\end{equation}
where $c_\ell^{(2)}$ denotes the mean value of $r_\ell$ over $\mathcal O_\ell$. Moreover, by elliptic regularity, 
\begin{equation}\label{est4.prf.lem.bdry.int}
    \|\Psi_\ell\|_{H^2(\mathcal O_\ell)}
    + \|r_\ell-c_\ell^{(2)}\|_{H^1(\mathcal O_\ell)}
    \le
    C\langle\lambda\rangle
    \|(\varphi,\Psi)\|_{H^2(B')}.
\end{equation}
Interpolating \eqref{est3.prf.lem.bdry.int} and \eqref{est4.prf.lem.bdry.int}, then taking traces, gives
\begin{equation}\label{est5.prf.lem.bdry.int}
\begin{split}
    &\|\Psi_\ell\|_{H^{1/2}(\Gamma_\ell)}
    +
    \|r_\ell-c_\ell^{(2)}\|_{H^{-1/2}(\Gamma_\ell)} \\
    &\le
    C\langle\lambda\rangle
    \|(\varphi,\Psi)\|_{H^2(B')}^{3/4}
    \|(\varphi,\oprot\Psi)\|_{L^2(B'\setminus\overbar{D})}^{1/4}.
\end{split}
\end{equation}
On the other hand, by the zero-flux condition \eqref{eq.flux},
\[
    \int_{\Gamma_\ell}
    (H\cdot n)\overbar{r_\ell}\dd \sigma
    =
    \int_{\Gamma_\ell}
    (H\cdot n)
    \Big(\overbar{r_\ell-c_\ell^{(2)}}\Big)
    \dd \sigma.
\]
Hence \eqref{eq4.prf.lem.bdry.int} and \eqref{est5.prf.lem.bdry.int} imply
\begin{equation}\label{est6.prf.lem.bdry.int}
\begin{split}
    &\bigg|
    \int_{\partial D}
    (Z\times n)\cdot\overbar{\Psi}\dd \sigma
    +
    \int_{\partial D}
    (H\cdot n)\overbar{s_2}\dd \sigma
    \bigg| \\
    &\le
    C \langle\lambda\rangle
    \|V\|_{H^2(D)}
    \|(\varphi,\Psi)\|_{H^2(B')}^{3/4}
    \|(\varphi,\oprot\Psi)\|_{L^2(B'\setminus\overbar{D})}^{1/4}.
\end{split}
\end{equation}
Combining \eqref{est2.prf.lem.bdry.int} and \eqref{est6.prf.lem.bdry.int}, we conclude that
\begin{equation}\label{est.mag.prf.lem.bdry.int}
\begin{split}
    &|
    \mathcal{B}_{{\rm Mag}}(H,\Psi)
    +
    \mathcal{B}_{{\rm Coup}}(V,W)
    +
    \mathcal{B}_{{\rm pr},2}(H,s_2)
    | \\
    &\le
    C\langle\lambda\rangle
    \|V\|_{H^2(D)}
    \|(\varphi,\Psi)\|_{H^2(B')}^{3/4}\|(\varphi,\oprot\Psi)\|_{L^2(B'\setminus\overbar{D})}^{1/4}.
\end{split}
\end{equation}

Finally, inserting \eqref{est.stokes.prf.lem.bdry.int} and \eqref{est.mag.prf.lem.bdry.int} into the equality \eqref{eq2.prf.lem.bdry.int} proves the lemma.
\end{proof}

\begin{remark}\label{rem.lem.bdry.int}
Note that \eqref{est1.prf.lem.bdry.int} and \eqref{est.lem.vor.interp} are stated for fixed geometries and in terms of inhomogeneous Sobolev norms. Under a simultaneous dilation of the relevant domains, both estimates admit the usual scale-covariant formulations with radius-weighted norms; the interpolation exponents $3/4$ and $1/4$ in \eqref{est1.prf.lem.bdry.int}, as well as the exponent $1/2$ in \eqref{est.lem.vor.interp}, remain unchanged. Since this is only a reformulation of the same estimates, it does not affect the $\eps$-dependence in Proposition \ref{prop.approx.resol} just below or the final rate in Theorem \ref{thm.main}.
\end{remark}

The following proposition is the main result of this section.

\begin{proposition}\label{prop.approx.resol}
There exists $C>0$ depending on $\rho_0$ in \eqref{assump.lambda} such that the following holds: let $V=(v,H)\in \mathcal{X}^{\rm tgt}$ be associated with a pressure $q\in H^1(D)$. Then, for $\eps > 0$, there exists $\mathsf{F}=(\mathsf{F}^{(1)}, \mathsf{F}^{(2)})\in L^2(Y)^3 \times L^2_\sigma(Y)$ such that, using $J_Y$ in \eqref{def.T.J}, 
\[
    V_1 
    := 
    \big(
    (\lambda + {\mathbb{S}}_{\R^3})^{-1} 
    J_Y \mathsf{F}
    \big)\big|_D
\]
approximates $V$ in $D$ as 
\[
    \|V - V_1\|_{L^2(D)}
    \le
    \eps 
    \Big( \|V\|_{H^{2}(D)} + \|q\|_{H^{1}(D)} \Big). 
\]
Moreover, 
\begin{equation}\label{est.F}
    \|\mathsf{F}\|_{L^2(Y)}
    \le
    \exp
    \bigg(
    C
    \Big(
    \frac{\exp(C \langle \lambda \rangle)}{\eps}
    \Big)^{4/\mu}
    \bigg)
    \|V\|_{L^2(D)},
\end{equation}
where $\mu$ is introduced in Lemma \ref{lem.stab}.
\end{proposition}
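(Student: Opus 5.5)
We follow the Rüland--Salo strategy via the singular value decomposition of Lemma \ref{lem.A}. By Lemma \ref{lem.A}(\ref{item3.lem.A}), $V\in\mathcal{X}^{\rm tgt}\subset\mathcal{X}$, so $V=\sum_j \langle V,V_j\rangle_D V_j$ in $L^2(D)^6$. For a cut-off level $\alpha>0$ to be chosen, we define
\[
    \mathsf{F}
    :=
    \sum_{\alpha_j\ge\alpha}
    \frac{\langle V,V_j\rangle_D}{\alpha_j} F_j,
    \qquad
    V_1 = A\mathsf{F} = \sum_{\alpha_j\ge\alpha}\langle V,V_j\rangle_D V_j .
\]
Then $\|\mathsf{F}\|_{L^2(Y)}\le \alpha^{-1}\|V\|_{L^2(D)}$ and $V-V_1=\sum_{\alpha_j<\alpha}\langle V,V_j\rangle_D V_j$. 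It remains to bound this tail in terms of $\alpha$ and then pick $\alpha$ as a function of $\eps$.

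To estimate the tail, we test against $\mathsf{E}:=V-V_1$, which lies in $\mathcal{X}$ and is spanned by the $V_j$ with $\alpha_j<\alpha$. This gives $\|V-V_1\|_{L^2(D)}^2=\langle V,\mathsf{E}\rangle_D$. Lemma \ref{lem.bdry.int} applied with this $\mathsf{E}$ yields
\[
    \|\mathsf{E}\|^2\le C\langle\lambda\rangle\big(\|V\|_{H^2}+\|q\|_{H^1}\big)\|(\varphi,\Psi)\|_{H^2(B')}^{3/4}\|(\varphi,\oprot\Psi)\|_{L^2(B'\setminus\overbar D)}^{1/4}.
\]
The resolvent estimate for $\mathbb{S}^*_{\R^3}$ (sectorial by Remark \ref{rem.smallness}, with $|\lambda|\ge\rho_0$) gives $\|(\varphi,\Psi)\|_{H^2(B')}\le C\|\mathsf{E}\|_{L^2(D)}$. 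Next we need smallness of $(\varphi,\Psi)$ on $Y$. By Lemma \ref{lem.A}(\ref{item1.lem.A}), $\|A^*\mathsf{E}\|^2=\sum_{\alpha_j<\alpha}\alpha_j^2|\langle V,V_j\rangle|^2\le\alpha^2\|\mathsf{E}\|^2$, so $\|\varphi\|_{L^2(Y)}+\|\mathbb{P}_Y\Psi\|_{L^2(Y)}\le\alpha\|\mathsf{E}\|$. Lemma \ref{lem.vor.interp} then converts this to $\|\oprot\Psi\|_{L^2(Y_0)}\le C\alpha^{1/2}\|\mathsf{E}\|$ on a subdomain $Y_0\Subset Y$ meeting every component $\mathcal{O}_k$.

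We then propagate smallness from $Y_0$ to all of $B'\setminus\overbar D$. On each $\mathcal{O}_k$ the forcing $\mathtt e_D\mathsf{E}$ vanishes, so $(\varphi,\omega)=(\varphi,\oprot\Psi)$ solves \eqref{eq.lem.stab} with $\overbar\lambda$. We apply Lemma \ref{lem.stab} on $G=\mathcal{O}_k$ (or on a slightly smaller smooth subdomain; near $\partial D$ one uses an $H^1$ bound and a boundary-layer interpolation) with $\eta=C\alpha^{1/2}\|\mathsf{E}\|$ and $\mathcal{E}=C\|\mathsf{E}\|$. This gives
\[
    \|(\varphi,\oprot\Psi)\|_{L^2(B'\setminus\overbar D)}\le Ce^{C\langle\lambda\rangle}\|\mathsf{E}\|\,\big(\log(1/\alpha)\big)^{-\mu}.
\]
Inserting this above and dividing by $\|\mathsf{E}\|$ yields
\[
    \|V-V_1\|\le C e^{C\langle\lambda\rangle}\big(\|V\|_{H^2}+\|q\|_{H^1}\big)\big(\log(1/\alpha)\big)^{-\mu/4}.
\]
Choosing $\log(1/\alpha)=\big(Ce^{C\langle\lambda\rangle}/\eps\big)^{4/\mu}$ makes the error at most $\eps(\|V\|_{H^2}+\|q\|_{H^1})$, and $\|\mathsf{F}\|\le\alpha^{-1}\|V\|$ is exactly \eqref{est.F}.

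The main obstacle is the propagation step. Lemma \ref{lem.stab} controls $\|(\varphi,\omega)\|_{L^2(G)}$ only for $G$ with $\overbar{G'}\subset G$ and an $H^1(G)$ a priori bound, whereas we need control up to $\partial D$ on the full exterior components. I would first handle this by applying the lemma on $\mathcal{O}_k$ itself, since $(\varphi,\omega)\in H^1(\mathcal{O}_k)$ with bound $C\|\mathsf{E}\|$ via the $H^2$ resolvent estimate. If the propagation-of-smallness argument of \cite{AlessandriniRondiRossetVessella2009} only reaches interior sets, I would fall back on a thin-layer estimate $\|\cdot\|_{L^2(\text{layer of width }d)}\le Cd^{1/2}\|\cdot\|_{H^1}$, balanced against the interior bound. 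This changes only $\mu$ and the constants. A secondary care point is checking that $\mathsf{E}$ lies in $\mathcal{X}$, so that $A^*\mathsf{E}$ is given by the spectral expansion.
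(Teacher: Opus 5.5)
Your proposal is correct and follows the paper's proof essentially step by step. The ingredients coincide: the singular-value truncation (your threshold $\alpha$ plays the role of $\alpha_{N+1}$, where the paper picks the index $N$ with $1/\alpha_N \le \exp(\cdots) < 1/\alpha_{N+1}$), the identity $\|\mathsf{E}\|^2_{L^2(D)}=\langle V,\mathsf{E}\rangle_D$ combined with Lemma~\ref{lem.bdry.int} and the resolvent bound, the smallness of $A^*\mathsf{E}$ on $Y$ upgraded through Lemma~\ref{lem.vor.interp}, and propagation via Lemma~\ref{lem.stab}.

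The boundary worry in your last paragraph is unnecessary. Lemma~\ref{lem.stab} as stated already bounds $\|(\varphi,\omega)\|_{L^2(G)}$ on all of $G$, and the paper applies it directly with $G=\mathcal{O}_k$ and $G'=Y_k\Subset Y\cap\mathcal{O}_k$. Your $Y_0$ should likewise be a union of such subdomains, one in each $\mathcal{O}_k$, rather than a single connected set.
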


\begin{proof}
According to Lemma \ref{lem.A}, we expand $V\in \mathcal{X}$ as $V = \sum_{j=1}^{\infty} \beta_j V_j$ with $\beta_j = \langle V, V_j \rangle_{D}$. For a truncation parameter $N \in \N$, we define the approximate source $\mathsf{F}_N = \sum_{j=1}^{N} \alpha_j^{-1} \beta_j F_j$ and the approximation $V_N := A \mathsf{F}_N = \sum_{j=1}^{N} \beta_j V_j$. The error is given by $\mathsf{E}_N := V - V_N = \sum_{j=N+1}^{\infty} \beta_j V_j \in \mathcal{X}$. Our goal is to estimate $\|\mathsf{E}_N\|_{L^2(D)}$.

To improve the trivial bound $\|\mathsf{E}_N\|_{L^2(D)} \le \|V\|_{L^2(D)}$, we introduce the auxiliary function $(\varphi,\Psi)$. Denoting $\mathsf{E}_N=(\mathsf{E}_N^{(1)}, \mathsf{E}_N^{(2)})$, we define $(\varphi,\Psi)$ by, using $\Pi_D$ in \eqref{def.Pi}, 
\[
    (\varphi,\Psi)
    = 
    (\overbar{\lambda} + \mathbb{S}_{\R^3}^*)^{-1}
    \Pi_D \mathsf{E}_N. 
\]
Then, in terms of the adjoint $A^*$ in Lemma \ref{lem.A} (\ref{item1.lem.A}), we have
\[
    \big(
    \varphi|_{Y},
    \mathbb{P}_{Y}(\Psi|_{Y})
    \big) 
    = 
    A^* \mathsf{E}_N.
\]
Thus, we have the crucial smallness estimate in $Y$ as 
\begin{equation}\label{est1.prf.prop.approx.resol}
\begin{split}
    \|\varphi\|_{L^2(Y)}
    + \|\mathbb{P}_{Y}(\Psi|_{Y})\|_{L^2(Y)} 
    &\le
    C\|A^* \mathsf{E}_N\|_{L^2(Y)} \\
    &\le
    C\bigg\| \sum_{j=N+1}^{\infty} \alpha_j \beta_j F_j \bigg\|_{L^2(Y)}
    \le
    C\alpha_{N+1} \|\mathsf{E}_N\|_{L^2(D)}.
\end{split}
\end{equation}

Now we relate $\|\mathsf{E}_N\|_{L^2(D)}^2$ to boundary integrals. A key observation, relying on the orthogonality of the mode expansion, is that we can replace one $\mathsf{E}_N$ with $V$ as
\[
    \|\mathsf{E}_N\|_{L^2(D)}^2
    = \langle \mathsf{E}_N, \mathsf{E}_N \rangle_{D}
    = \langle V, \mathsf{E}_N \rangle_{D}.
\]
Then Lemma \ref{lem.bdry.int} yields that
\begin{equation}\label{est2.prf.prop.approx.resol}
\begin{split}
	\|\mathsf{E}_N\|_{L^2(D)}^2
    = 
    \langle V,\mathsf{E}_N \rangle_D
	&\le
	C\langle\lambda\rangle
	\Big(
	 \|V\|_{H^2(D)}
	+ \|q\|_{H^1(D)}
	\Big) \\
    &\quad\times
	\|(\varphi,\Psi)\|_{H^2(B')}^{3/4}
	\|(\varphi,\oprot\Psi)\|_{L^2(B'\setminus\overbar{D})}^{1/4}.
\end{split}
\end{equation}
Notice that, by the definition of $(\varphi,\Psi)$, the resolvent estimate gives
\begin{equation}\label{est3.prf.prop.approx.resol}
    \|(\varphi,\Psi)\|_{H^2(B')}
    \le
    C\|\mathsf{E}_N\|_{L^2(D)}.
\end{equation}

Let us propagate the smallness \eqref{est1.prf.prop.approx.resol} on $Y$ to \eqref{est2.prf.prop.approx.resol} on $B'\setminus \overbar{D}$. Let $\mathcal{O}_0,\dots,\mathcal{O}_M$ be the connected components of $B'\setminus \overbar{D}$. By the geometric assumption stated below \eqref{def.Y}, for each $k=0,\dots,M$ we can choose a nonempty subdomain $Y_k$ with $\overbar{Y_k}\subset Y\cap \mathcal{O}_k$. Since $\mathtt{e}_D \mathsf{E}_N=0$ in $\mathcal{O}_k$, we see that $(\varphi,\omega):=(\varphi,\oprot\Psi)$ solves the system 
\[
\begin{split}
    \left\{
    \begin{array}{ll}
    \overbar{\lambda} \varphi
    - \Delta\varphi
    + \omega\times\tilde{H}_*
    + \nabla s_1
    = 0&\mbox{in}\ \mathcal{O}_k, \\
    \overbar{\lambda} \omega
    - \Delta\omega
    + \oprot
    (
    \tilde{H}_*\cdot\nabla\varphi
    - (\nabla\tilde{H}_*)^\top\varphi
    )
    = 0&\mbox{in}\ \mathcal{O}_k, \\
    \opdiv \varphi
    = \opdiv \omega
    = 0&\mbox{in}\ \mathcal{O}_k.
    \end{array}\right. 
\end{split}
\]
Together with the estimate following from Lemma \ref{lem.vor.interp} and \eqref{est3.prf.prop.approx.resol}: 
\[
\begin{split}
    \|(\varphi,\omega)\|_{L^2(Y_k)}
    &\le
    \|\varphi\|_{L^2(Y)}
    + 
    \|\mathbb{P}_{Y}(\Psi|_{Y})\|_{L^2(Y)}^{1/2}
    \|\Psi\|_{H^2(Y)}^{1/2} \\
    &\le
    C(\alpha_{N+1} + \alpha_{N+1}^{1/2})
    \|\mathsf{E}_N\|_{L^2(D)}. 
\end{split}
\]
Lemma \ref{lem.stab} applied with $G=\mathcal{O}_k$ and $G'=Y_k$ yields that 
\begin{equation}\label{est4.prf.prop.approx.resol}
\begin{split}
	\|(\varphi,\omega)\|_{L^2(\mathcal{O}_k)}
    \le
    C 
    \exp(C \langle \lambda \rangle)\,
    \|\mathsf{E}_N\|_{L^2(D)}
    \bigg(\log \frac{1}{\alpha_{N+1}} \bigg)^{-\mu}. 
\end{split}
\end{equation}
Summing over $k=0,\ldots,M$, we conclude from \eqref{est2.prf.prop.approx.resol}--\eqref{est4.prf.prop.approx.resol} that 
\[
	\|\mathsf{E}_N\|_{L^2(D)}
    \le
	C
    \exp(C \langle \lambda \rangle)\,
	\Big(
	 \|V\|_{H^2(D)}
	+ \|q\|_{H^1(D)}
	\Big)
    \bigg(\log \frac{1}{\alpha_{N+1}} \bigg)^{-\mu/4}.
\]
Increasing the constant if necessary, we choose $N$ large enough to ensure 
\[
\begin{split}
    \frac{1}{\alpha_{N}}
    \le
    \exp
    \bigg(
    C^{4/\mu} 
    \Big(\frac{   \exp(C \langle \lambda \rangle)}{\eps} \Big)^{4/\mu}
    \bigg)
    <
    \frac{1}{\alpha_{N+1}}.
\end{split}
\]
Then we have 
\[
    \|\mathsf{E}_N\|_{L^2(D)} 
    \le 
    \eps 
    \Big( \|V\|_{H^{2}(D)} + \|q\|_{H^{1}(D)} \Big). 
\]
Consequently, the statement of Proposition \ref{prop.approx.resol} follows from setting $\mathsf{F} = \mathsf{F}_N$ and $V_1 = A \mathsf{F}$. The estimate \eqref{est.F} for $\mathsf{F}$ follows from $\|\mathsf{F}\|_{L^2(Y)}\le C(\alpha_{N})^{-1}\|V\|_{L^{2}(D)}$. 
\end{proof}

    \section{Global approximation of harmonic vector fields}
    \label{sec.harm}

In this section, we present the quantitative Runge approximation for the part $V_{{\rm topo}}$ in \eqref{eq.V.thm.main}.

Throughout this section, let
\[
    H
    \in 
    X_{{\rm har}}(D).
\]
We then observe that the pair $V=(0,H)$ and $q=H_*\cdot H$ solves
\begin{equation}\label{eq.MHD.topo.loc}
    \left\{
    \begin{array}{ll}
    -\Delta v 
    - H_*\cdot\nabla H - H\cdot\nabla H_* 
    + \nabla q
    = 0&\mbox{in}\ D, \\
    \oprot (\oprot H) + \oprot (H_* \times v)
    = 0&\mbox{in}\ D, \\
    \opdiv v
    = \opdiv H 
    = 0&\mbox{in}\ D.
    \end{array}\right. 
\end{equation}
Indeed, by the equality for vector fields $A,B$
\[
    \nabla(A \cdot B)
    =
    A\cdot \nabla B 
    + B\cdot \nabla A 
    + A\times (\oprot B) 
    + B\times (\oprot A), 
\]
and by $H_*,H\in X_{{\rm har}}(D)$, we see that the pair $V$ and $q$ satisfies \eqref{eq.MHD.topo.loc}.

Recall from \eqref{def.Y} that $Y$ is the control set. We start with the following problem:
\begin{equation}\label{eq.stat.topo.pde}
    \left\{
    \begin{array}{ll}
    -\Delta u
    - \tilde{H}_*\cdot\nabla B - B\cdot\nabla \tilde{H}_*
    + \nabla p
    = \mathtt{e}_Y i& \mbox{in}\ \R^3,\\
    \oprot(\oprot B) + \oprot(\tilde{H}_*\times u)
    = \mathtt{e}_Y j& \mbox{in}\ \R^3,\\
    \opdiv u
    = \opdiv B
    = 0&\mbox{in}\ \R^3.
    \end{array}
    \right.
\end{equation}

\begin{lemma}\label{lem.solver.topo}
If $\|\tilde{H}_*\|_{W^{1,\infty}(\R^3)}$ is sufficiently small, for every $F=(i,j)\in L^2(Y)^3 \times L^2_\sigma(Y)$, there exists a unique weak solution $U=U_F=(u_F,B_F)\in \dot{H}^1_\sigma(\R^3)^6$ of \eqref{eq.stat.topo.pde} with an associated pressure $p\in L^2_{loc}(\R^3)$ satisfying  
\[
    \|\nabla U\|_{L^2(\R^3)}
    \le
    C\|F\|_{L^2(Y)}. 
\]
\end{lemma}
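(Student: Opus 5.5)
We argue by the Lax--Milgram theorem on the Hilbert space $\mathcal H:=\dot H^1_\sigma(\R^3)\times\dot H^1_\sigma(\R^3)$, the completion of divergence-free test fields $C^\infty_{0,\sigma}(\R^3)^3$ under $\|\nabla\cdot\|_{L^2(\R^3)}$, normed by $\|\nabla U\|_{L^2(\R^3)}$.

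\textbf{Weak formulation.} For $U=(u,B)$ and $W=(w,b)$ in $\mathcal H$ we set
\[
    a(U,W)
    :=
    \langle\nabla u,\nabla w\rangle_{\R^3}
    +\langle\oprot B,\oprot b\rangle_{\R^3}
    +\langle B,\tilde H_*\cdot\nabla w\rangle_{\R^3}
    -\langle B\cdot\nabla\tilde H_*,w\rangle_{\R^3}
    +\langle\tilde H_*\times u,\oprot b\rangle_{\R^3}.
\]
The third term comes from integrating $-\tilde H_*\cdot\nabla B$ by parts using $\opdiv\tilde H_*=0$, as in the proof of Lemma \ref{lem.adjoint}; the fifth uses the rot Green formula. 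The right-hand side is $\ell(W):=\langle i,w\rangle_Y+\langle j,b\rangle_Y$.

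\textbf{Continuity of $\ell$.} Since $Y$ is bounded, H\"older and Sobolev give $\|w\|_{L^2(Y)}\le C\|w\|_{L^6(\R^3)}\le C\|\nabla w\|_{L^2(\R^3)}$. Hence $|\ell(W)|\le C\|F\|_{L^2(Y)}\|\nabla W\|_{L^2(\R^3)}$. Note that the pressure and the divergence-free restriction on $j$ play no role at this stage.

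\textbf{Coercivity.} For divergence-free $b$ decaying at infinity we have $\|\oprot b\|_{L^2}=\|\nabla b\|_{L^2}$. So the principal part gives exactly $\|\nabla U\|_{L^2}^2$. The coupling terms are controlled using the compact support of $\tilde H_*$ in $B_R$ together with the Sobolev embedding, which gives $\|u\|_{L^2(B_R)}\le C_R\|\nabla u\|_{L^2}$. Each coupling term is then bounded by $C_R\|\tilde H_*\|_{W^{1,\infty}}\|\nabla U\|_{L^2}^2$. For example,
\[
    |\langle B,\tilde H_*\cdot\nabla w\rangle|\le \|\tilde H_*\|_{L^\infty}\|B\|_{L^2(B_R)}\|\nabla w\|_{L^2}.
\]
Consequently, if $C_R\|\tilde H_*\|_{W^{1,\infty}}\le 1/2$ (this is the constant $\delta_{\rm coe}$ in Remark \ref{rem.smallness}), then
\[
    \Re a(U,U)\ge\tfrac12\|\nabla U\|_{L^2}^2.
\]
Lax--Milgram then yields a unique $U\in\mathcal H$ with $a(U,W)=\ell(W)$ for all $W\in\mathcal H$, and $\|\nabla U\|_{L^2}\le 2C\|F\|_{L^2(Y)}$.

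\textbf{Pressures and the magnetic equation.}

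\emph{Velocity equation.} The functional $w\mapsto a(U,(w,0))-\langle \mathtt e_Y i,w\rangle$ on $\dot H^1(\R^3)^3$ lies in $\dot H^{-1}$, since the lower-order terms are supported in $B_R$ with $L^2$ coefficients. It vanishes on divergence-free fields. By de Rham/Ne\v{c}as it therefore equals $\nabla p$ for some $p\in L^2_{loc}(\R^3)$ (indeed $p\in L^2(\R^3)$ modulo constants).

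\emph{Magnetic equation.} We must test against all $b$, not only divergence-free ones, which produces a possible gradient term $\nabla s$. Taking the divergence of the resulting equation $\oprot\oprot B+\oprot(\tilde H_*\times u)+\nabla s=\mathtt e_Y j$ gives $\Delta s=\opdiv(\mathtt e_Y j)$. Since $j\in L^2_\sigma(Y)$, the zero extension satisfies $\opdiv \mathtt e_Y j=0$ in $\mathcal D'(\R^3)$, as noted in the notation section. So $\Delta s=0$ with $\nabla s\in L^2(\R^3)$, hence $\nabla s=0$. This is the one place where the assumption $j\in L^2_\sigma(Y)$ is used.

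\textbf{Uniqueness.} Uniqueness in $\dot H^1_\sigma$ follows from coercivity. The main obstacle I expect is the coercivity step: making the smallness threshold depend only on $R$, which is fixed, through the Sobolev--Poincar\'e bound on $B_R$.
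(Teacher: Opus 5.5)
Your proposal is correct and follows the paper's proof. Both use Lax--Milgram on $\dot H^1_\sigma(\R^3)^6$ with the same sesquilinear form (up to your harmless integration by parts of the $\tilde H_*\cdot\nabla B$ term), continuity of $\ell_F$ via $\dot H^1(\R^3)\hookrightarrow L^6(\R^3)$, and coercivity from $\|\oprot B\|_{L^2(\R^3)}=\|\nabla B\|_{L^2(\R^3)}$ plus smallness of $\tilde H_*$. You additionally spell out the pressure recovery the paper calls routine, including why no magnetic pressure arises (because $\mathtt e_Y j$ is divergence-free).

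One small correction there: de Rham in $\dot H^{-1}$ gives $s\in L^2(\R^3)$ up to a constant, not $\nabla s\in L^2(\R^3)$, since a priori $\oprot\oprot B$ is only in $\dot H^{-1}$. A harmonic function in $L^2(\R^3)$ plus a constant is constant, so $\nabla s=0$ still follows.
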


\begin{remark}\label{rem.lem.solver.topo}
The same argument below applies, without any change, when the right-hand side $(\mathtt e_Y i,\mathtt e_Y j)$ is replaced by an arbitrary pair in $L^2(\R^3)^3\times L^2_\sigma(\R^3)$ with compact support. In addition, the argument applies to the adjoint form as well.
\end{remark}

\begin{proofx}{Lemma \ref{lem.solver.topo}}
For $U=(u,B),W=(\varphi,\Psi)$ in $\dot{H}^1_\sigma(\R^3)^6$, we define 
\[
\begin{split}
    a(U,W)
    &=
    \int_{\R^3} \nabla u \cdot \nabla\overbar{\varphi} \dd x
    + \int_{\R^3} \oprot B \cdot \oprot\overbar{\Psi} \dd x \\
    &\quad
    - \int_{\R^3}
    (
    \tilde{H}_*\cdot\nabla B
    + B\cdot\nabla \tilde{H}_*
    )
    \cdot \overbar{\varphi} \dd x
    + \int_{\R^3}
    (\tilde{H}_*\times u)\cdot \oprot\overbar{\Psi} \dd x.
\end{split}
\]
We also define the linear form
\[
    \ell_F(W)
    =
    \int_Y i\cdot \overbar{\varphi} \dd x
    + \int_Y j\cdot \overbar{\Psi} \dd x.
\]
Then the weak formulation of \eqref{eq.stat.topo.pde} is simplified to
\[
    a(U,W)=\ell_F(W).
\]
Since $Y$ is a bounded domain and $\dot H^1(\R^3)\hookrightarrow L^6(\R^3)$,
\[
    |\ell_F(W)|
    \le
    C\|F\|_{L^2(Y)}
    \|\nabla W\|_{L^2(\R^3)}.
\]
Moreover,
\[
    |a(U,W)|
    \le
    C\|\nabla U\|_{L^2(\R^3)}
    \|\nabla W\|_{L^2(\R^3)}.
\]
Since $B\in \dot H^1_\sigma(\R^3)$, the whole-space identity
\[
    \|\nabla B\|_{L^2(\R^3)}^2
    =
    \|\oprot B\|_{L^2(\R^3)}^2
\]
holds. Hence, for some $C_1,C_2>0$,
\[
\begin{split}
    \Re a(U,U)
    &\ge
    C_1
    \|\nabla U\|_{L^2(\R^3)}^2
    - C_2
    \|\tilde{H}_*\|_{W^{1,\infty}(\R^3)}
    \|\nabla U\|_{L^2(\R^3)}^2.
\end{split}
\]
Thus, if $\|\tilde{H}_*\|_{W^{1,\infty}(\R^3)}$ is sufficiently small, then
\[
    \Re a(U,U)
    \ge
    \frac12
    \|\nabla U\|_{L^2(\R^3)}^2.
\]
Hence the Lax-Milgram theorem yields a unique $U=(u,B)$ in $\dot H^1_\sigma(\R^3)^6$ satisfying
\[
    \|\nabla U\|_{L^2(\R^3)}
    \le
    C\|F\|_{L^2(Y)}.
\]
Recovering the pressure $p$ is routine work. The proof is complete. 
\end{proofx}

The following lemma is the stationary counterpart of Lemma \ref{lem.stab}.

\begin{lemma}\label{lem.stab.topo}
Let $G\subset\R^3$ be a bounded domain with smooth boundary, $G'$ be a subdomain of $G$ satisfying $\overbar{G'} \subset G$. Let $(\varphi,\omega)\in H^2_{{\rm loc}}(G)^6 \cap H^1(G)^6$ satisfy, for some $s\in H^1(G)$, 
\begin{equation}\label{eq.lem.stab.topo}
    \left\{
    \begin{array}{ll}
    -\Delta \varphi
    + \omega \times \tilde{H}_*
    + \nabla s
    = 0
    & \mbox{in}\ G,\\
    -\Delta \omega
    + \oprot
    (
    \tilde{H}_* \cdot\nabla \varphi 
    - (\nabla \tilde{H}_*)^\top \varphi
    )
    = 0
    & \mbox{in}\ G,\\
    \opdiv \varphi
    =
    \opdiv \omega
    = 0
    & \mbox{in}\ G.
    \end{array}
    \right.
\end{equation}
Suppose that, for $0<\eta<\mathcal{E}$, we have $\|(\varphi,\omega)\|_{H^1(G)} \le \mathcal{E}$ and $\|(\varphi,\omega)\|_{L^2(G')} \le \eta$. Then there exist $C,\mu>0$ independent of $\eta, \mathcal{E}$ such that 
\begin{equation}\label{est.lem.stab.topo}
    \|(\varphi,\omega)\|_{L^2(G)}
    \le 
    C \mathcal{E} 
    \bigg(\log \frac{\mathcal{E}}{\eta} \bigg)^{-\mu}. 
\end{equation}
In particular, if $(\varphi,\omega)=0$ in a nonempty open subset of $G$, then $(\varphi,\omega)=0$ in $G$.
\end{lemma}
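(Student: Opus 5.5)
The plan is to follow the proof of Lemma \ref{lem.stab} with $\lambda=0$. In the stationary case the time-dependent lift is trivial: we set $(\Phi,\Omega)(x,t)=(\varphi,\omega)(x)$ and $S(x,t)=s(x)$ on $G\times(-1,1)$. These functions solve the parabolic system \eqref{eq1.prf.lem.stab} with $\partial_t\Phi=\partial_t\Omega=0$. Alternatively, one could work directly with the elliptic system obtained by setting $\lambda=0$. Either way, the factor $\exp(C\langle\lambda\rangle)$ from Lemma \ref{lem.stab} is simply replaced by a constant, because the time integral $\int_{-a}^a\exp(2t\Re\lambda)\,dt$ equals $2a$.

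The first step is to reduce to a system with diagonal principal part. As before, we set $Q=\oprot\Phi$ and use $\Delta\Phi+\oprot Q=0$ from \eqref{eq2.prf.lem.stab}. Taking the rotation of the first equation removes the pressure $S$. This gives the pair $-\Delta Q+\opDiv F=0$ and $-\Delta\Omega+\opDiv G=0$, with $F,G$ defined as in the proof of Lemma \ref{lem.stab}. They satisfy the pointwise bound $|F|+|G|\le C(|\Omega|+|\nabla\Phi|+|\Phi|)$, using $\tilde H_*\in W^{1,\infty}$.

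The second step is to apply the Carleman estimates of \cite{LinWang2022} to the coupled Laplacian-type system for $(\Phi,Q,\Omega)$. This yields the three-cylinder inequality \eqref{est1.prf.lem.stab}, which in the time-independent setting reduces to a three-ball inequality:
\[
\|(\varphi,\omega)\|_{L^2(B_{R_2})}\le C\|(\varphi,\omega)\|_{L^2(B_{R_1})}^{\theta}\|(\varphi,\omega)\|_{L^2(B_{R_3})}^{1-\theta}.
\]
Here $C$ and $\theta$ depend only on the radii, $G$, and $\|\tilde H_*\|_{W^{1,\infty}}$. The main obstacle is to check that lower-order terms involving $\nabla\Phi$ and $Q$ can be absorbed into the Carleman weight using only $L^2$ norms of $(\varphi,\omega)$ on the balls. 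I would handle this as in Lemma \ref{lem.stab}. First, apply Caccioppoli-type interior estimates, which come from the elliptic equations for $\varphi$ (with $s$ eliminated) and for $\omega$. Then control $\|\nabla\varphi\|$ and $\|Q\|$ on slightly smaller balls by $\|(\varphi,\omega)\|_{L^2}$ on larger ones, using divergence-freeness to bound $\|Q\|$.

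The final step is to run the propagation-of-smallness argument of \cite{AlessandriniRondiRossetVessella2009}. We chain balls from $G'$ to the rest of $G$ and use the a priori bound $\|(\varphi,\omega)\|_{H^1(G)}\le\mathcal E$ to control the region near $\partial G$, where the balls degenerate. This gives \eqref{est.lem.stab.topo} with a logarithmic rate $\mu>0$ independent of $\eta$ and $\mathcal E$. For the qualitative statement, suppose $(\varphi,\omega)$ vanishes on an open set. Choose $G'$ inside that open set and let $\eta\downarrow0$ in \eqref{est.lem.stab.topo}, propagating along a chain of balls. This gives $(\varphi,\omega)=0$ in $G$.
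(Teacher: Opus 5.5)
Your proposal is correct and is essentially the paper's proof, which consists of the single remark that one reruns the proof of Lemma \ref{lem.stab} with $\lambda=0$. As you note, the trivial time lift gives $\int_{-a}^{a}\exp(2t\Re\lambda)\,dt=2a$, so the factor $\exp(C\langle\lambda\rangle)$ drops out; the same curl reduction, the Carleman-based three-ball inequality and the propagation-of-smallness argument of \cite{AlessandriniRondiRossetVessella2009} then yield \eqref{est.lem.stab.topo}, and your interior-estimate remarks only make explicit what the paper leaves inside its citation of \cite{LinWang2022}.
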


\begin{proof}
The argument is exactly the version of the proof of Lemma \ref{lem.stab} with $\lambda=0$. 
\end{proof}

We now proceed with the approximation of $V=(0,H)$ by a global solution under forcing supported in $Y$ in \eqref{def.Y}. We define $\mathcal{X}^{\rm tgt}_0$ as the space of target functions by
\begin{equation}\label{def.Xtarget0}
    \mathcal{X}^{\rm tgt}_0
    = 
    \{V=(0,H)
    ~|~ 
    H\in X_{{\rm har}}(D)
    \}.
\end{equation}
We also define the linear operator 
\[
    T_0: L^2(Y)^3 \times L^2_\sigma(Y) \to L^2(D)^6
\]
as follows: for $F\in L^2(Y)^3 \times L^2_\sigma(Y)$, letting $U$ be the weak solution in Lemma \ref{lem.solver.topo}, 
\[
    T_0 F 
    =
    U|_D.
\]
Moreover, 
\begin{equation}\label{def.X0}
    \mathcal{X}_{0}
    =
    \overbar{\opRan(T_0)}^{L^2(D)^6}.
\end{equation}
Let us denote by $\mathcal{K}_{0}$ the kernel of $T_0$ and by $\mathcal{K}_{0}^\perp$ the orthogonal space of $\mathcal{K}_{0}$ in $L^2(Y)^3 \times L^2_\sigma(Y)$. Finally, let us denote by $A_{0}$ the restriction of $T_0$ to $\mathcal{K}_{0}^\perp$.

\begin{lemma}\label{lem.Atopo}
The following hold. 
\begin{enumerate}[(1)]
\item\label{item1.lem.Atopo}
For $V=(v,H)\in \mathcal{X}_{0}$, let $(\varphi,\Psi)$ be the unique weak solution of the adjoint system
\begin{equation}\label{eq.stat.adjoint}
    \left\{
    \begin{array}{ll}
    - \Delta \varphi
    + (\oprot \Psi) \times \tilde{H}_*
    + \nabla s_1
    = \mathtt{e}_D v
    & \mbox{in}\ \R^3,\\
    \oprot(\oprot \Psi)
    + \tilde{H}_* \cdot\nabla \varphi 
    - (\nabla \tilde{H}_*)^\top \varphi
    + \nabla s_2
    = \mathtt{e}_D H
    & \mbox{in}\ \R^3,\\
    \opdiv \varphi
    =
    \opdiv \Psi
    = 0
    & \mbox{in}\ \R^3.
    \end{array}
    \right.
\end{equation}
Then the adjoint $A_{0}^*: \mathcal{X}_{0} \to \mathcal{K}_{0}^\perp$ of $A_{0}$ is given by
\[
    A_{0}^* V
    =
    \big(
    \varphi|_{Y},
    \mathbb{P}_{Y}(\Psi|_{Y})
    \big).
\]

\item\label{item2.lem.Atopo}
The mapping $A_{0}^* A_{0}$ is positive, compact, and self-adjoint on $\mathcal{K}_{0}^\perp$.

\item\label{item3.lem.Atopo}
The range of $A_{0}$ is dense in $\mathcal{X}_{0}$. Moreover, $\mathcal{X}^{\rm tgt}_0$ is contained in $\mathcal{X}_0$.

\item\label{item4.lem.Atopo}
There exist orthonormal bases 
\[
    \{F_j\}_{j=1}^{\infty} \subset \mathcal{K}_{0}^\perp, 
    \qquad
    \{V_j\}_{j=1}^{\infty} \subset \mathcal{X}_{0}, 
\]
and positive constants $\{\alpha_j\}_{j=1}^{\infty}$ with $\alpha_1\ge \alpha_2\ge\cdots>0$ and $\alpha_j\to0$ such that 
\[
    A_{0} F_j = \alpha_j V_j, 
    \qquad
    A_{0}^* V_j = \alpha_j F_j. 
\]
\end{enumerate}
\end{lemma}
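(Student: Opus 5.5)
The plan is to mirror the proof of Lemma \ref{lem.A}, with the resolvent operators $(\lambda+\mathbb{S}_{\R^3})^{-1}$ and $(\overbar\lambda+\mathbb{S}_{\R^3}^*)^{-1}$ replaced by the stationary solver of Lemma \ref{lem.solver.topo} and its adjoint version (Remark \ref{rem.lem.solver.topo}).

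For (\ref{item1.lem.Atopo}), given $F=(i,j)$ let $U=U_F$ be as in Lemma \ref{lem.solver.topo}, and let $W=(\varphi,\Psi)$ solve \eqref{eq.stat.adjoint}. The adjoint bilinear form satisfies $a^*(W,U)=\overbar{a(U,W)}$, which is exactly the integration-by-parts computation of Lemma \ref{lem.adjoint}. Testing the adjoint problem with $U$ and the primal problem with $W$ then gives
\[
\langle T_0F,V\rangle_D=\langle U,\mathtt e_D V\rangle_{\R^3}=a(U,W)=\ell_F(W)=\langle i,\varphi\rangle_Y+\langle j,\Psi\rangle_Y .
\]
Since $j\in L^2_\sigma(Y)$, we may replace $\Psi$ by $\mathbb{P}_Y(\Psi|_Y)$. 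To see that this pair lies in $\mathcal K_0^\perp$, apply the same identity to $G\in\mathcal K_0$. One technical point: $\dot H^1$ test functions are only in $L^6$. This is harmless because $\mathtt e_D V$ and $\mathtt e_Y F$ have compact support, so all the pairings make sense.

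For (\ref{item2.lem.Atopo}), I need compactness of $T_0$. Take a cutoff $\chi\in C_c^\infty$ with $\chi=1$ near $\overbar D$. Local elliptic regularity for the stationary system, using $\tilde H_*\in W^{1,\infty}$, gives $\|U\|_{H^2(D)}\le C\|F\|_{L^2(Y)}$. Even without this, the bound $\|U\|_{H^1(B')}\le C\|\nabla U\|_{L^2}$ from the $L^6$ embedding is enough, since $H^1(B')\hookrightarrow L^2(D)$ compactly by Rellich. Hence $A_0^*A_0$ is compact, self-adjoint and nonnegative, and it is injective on $\mathcal K_0^\perp$. Part (\ref{item4.lem.Atopo}) then follows from the spectral theorem, exactly as before.

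The main step is (\ref{item3.lem.Atopo}). Density holds by definition. For the inclusion $\mathcal X^{\rm tgt}_0\subset\mathcal X_0$, take $V\in\mathcal X_0^\perp$ and solve \eqref{eq.stat.adjoint} for this $V$. The computation above gives $\varphi|_Y=0$ and $\mathbb{P}_Y(\Psi|_Y)=0$, hence $\omega=\oprot\Psi=0$ on $Y$. On each component $\mathcal O_k$ of $B'\setminus\overbar D$ the pair $(\varphi,\omega)$ solves \eqref{eq.lem.stab.topo}, so Lemma \ref{lem.stab.topo} gives $\varphi=\omega=0$ on $B'\setminus\overbar D$. In particular $\nabla s_1=0$ and $\nabla s_2=0$ there, so $s_1$ and $s_2$ are constant on each $\mathcal O_k$, and $\Psi$ is a gradient on each $\mathcal O_k$ with $\Psi=\nabla\chi_k$.

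Now let $U=(0,H)$ with $H\in X_{\rm har}(D)$. I repeat the Green-formula identity \eqref{eq2.prf.lem.A} with $\lambda=0$, taking $u=0$ and pressure $p=H_*\cdot H$. The Stokes and coupling terms vanish: they carry $u$ or traces of $\varphi,\partial_n\varphi$, which are zero. The term $\int(H\times n)\cdot\overbar\omega$ vanishes because $\omega=0$ on $\partial D$ from outside, and $\oprot H\times n=0$ since $\oprot H=0$. The pressure term $\int_{\Gamma_\ell}(H\cdot n)\overbar{s_2}$ vanishes because $H\cdot n=0$.

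I expect the real obstacle to be the remaining boundary terms. With $\lambda=0$ we can no longer write $\Psi=-\nabla s_2/\overbar\lambda$, so I must check directly that every boundary term involving $\Psi$ comes with a factor $\oprot H\times n$ or $H\cdot n$, both of which vanish. This requires redoing the derivation of Appendix \ref{appx.bdry.int} carefully with $\lambda=0$, including the second-order pressure contribution and the term $\int_D H\cdot\oprot\oprot\overline\Psi$. Once that is done, $\langle U,V\rangle_D=0$, so $U\in(\mathcal X_0^\perp)^\perp=\mathcal X_0$.
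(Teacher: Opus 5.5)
Your proposal is correct and follows the paper's proof essentially step by step. Two small remarks: your $H^1$-plus-Rellich compactness argument is a slightly cheaper variant of the paper's $H^2(D)$ bound, and your side remark that $\Psi=\nabla\chi_k$ globally on each $\mathcal O_k$ is false when $\mathcal O_k$ is not simply connected (e.g.\ for a solid torus $D$), though you never use it.

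The obstacle you anticipate at the end does not exist. The $\lambda$-terms cancel identically in the derivation of \eqref{eq2.prf.lem.A} in Appendix~\ref{appx.bdry.int}, so that identity holds verbatim at $\lambda=0$. The four boundary terms you already disposed of are therefore all of them, leaving exactly the paper's identity $\langle U,V\rangle_D=-\int_{\partial D}p\,n\cdot\overbar{\varphi}\dd\sigma+\int_{\partial D}(H\times n)\cdot\overbar{\omega}\dd\sigma=0$. The vanishing of these traces from the interior uses $(\varphi,\Psi)\in H^2(B')^6$.
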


\begin{proof}
The proof of (\ref{item1.lem.Atopo}) is quite similar to that of Lemma \ref{lem.A} (\ref{item1.lem.A}) and thus omitted.

For (\ref{item2.lem.Atopo}), elliptic regularity in a neighborhood of $\overbar{D}$, where the forcing vanishes, gives
\[
    \|T_0F\|_{H^2(D)}
    \le
    C\|F\|_{L^2(Y)}.
\]
Thus $A_0$ is compact into $L^2(D)^6$, and $A_0^*A_0$ is positive, compact, and self-adjoint.

We now prove (\ref{item3.lem.Atopo}). The proof is similar to that of Lemma \ref{lem.A} (\ref{item3.lem.A}). It suffices to show that  $\mathcal{X}^{\rm tgt}_0\subset\mathcal{X}_0$. For $V=(v,H)\in L^2(D)^6$ orthogonal to $\mathcal{X}_{0}$, let $(\varphi,\Psi)$ be the solution of \eqref{eq.stat.adjoint}. Then we see that, for every $F\in L^2(Y)^3\times L^2_\sigma(Y)$, 
\[
    0
    =
    \langle T_0 F,V\rangle_D 
    =
    \big\langle
    F,
    \big(
    \varphi|_{Y},
    \mathbb{P}_{Y}(\Psi|_{Y})
    \big)
    \big\rangle_{Y}. 
\]
Consequently,
\[
    \varphi|_Y=0,
    \qquad
    \mathbb{P}_Y(\Psi|_Y)=0.
\]
Set $\omega=\oprot\Psi$. The second equality implies that $\Psi|_Y$ is locally of gradient form, and hence $\omega|_Y=0$. Let $\mathcal{O}_0,\ldots,\mathcal{O}_M$ be the connected components of $B'\setminus\overbar D$, and choose nonempty subdomains $Y_k\Subset Y\cap\mathcal{O}_k$. Since the forcing in \eqref{eq.stat.adjoint} vanishes in $\mathcal{O}_k$, the pair $(\varphi,\omega)$ satisfies \eqref{eq.lem.stab.topo}. Lemma \ref{lem.stab.topo} therefore gives $(\varphi,\omega)=0$ in $\mathcal{O}_k$ for every $k$, and thus 
\[
    \varphi|_{B'\setminus\overbar{D}}=0,
    \qquad
    \omega|_{B'\setminus\overbar{D}}=0.
\]

Let $U=(0,B)\in \mathcal{X}^{\rm tgt}_0$ be associated with a pressure $p = H_*\cdot B$. Then the Green formula used in the proof of Lemma \ref{lem.A} (\ref{item3.lem.A}) yields
\[
    \langle U,V\rangle_D
    =
    -\int_{\partial D}
    p n\cdot\overbar{\varphi}\dd \sigma
    +
    \int_{\partial D}
    (B\times n)\cdot\overbar{\omega}\dd \sigma.
\]
Since $(\varphi,\Psi)\in H^2(B')^6$ by elliptic regularity, the vanishing of $(\varphi,\omega)$ in the exterior gives zero traces in this identity. Hence $\langle U,V\rangle_D=0$, and therefore $U\in(\mathcal{X}_0^\perp)^\perp=\mathcal{X}_0$.

Finally, (\ref{item4.lem.Atopo}) follows from the spectral theorem applied to the compact positive self-adjoint operator $A_0^* A_0$ on $\mathcal{K}_0^\perp$. This completes the proof of Lemma \ref{lem.Atopo}.
\end{proof}

The following lemma is the stationary counterpart of Lemma \ref{lem.bdry.int}, specialized to the harmonic targets needed in the proof of Theorem \ref{thm.main}.

\begin{lemma}\label{lem.bdry.int.topo}
There exists $C>0$ such that the following holds: let $V=(0,H)\in \mathcal{X}^{\rm tgt}_0$ be associated with a pressure $q = H_*\cdot H$. For $\mathsf{E}\in L^2(D)^6$, we let $(\varphi,\Psi)$ be the unique weak solution of \eqref{eq.stat.adjoint} with the forcing $\mathtt{e}_D \mathsf{E}$. Then
\begin{equation}\label{est.lem.bdry.int.topo}
\begin{split}
    |\langle V,\mathsf{E}\rangle_D|
    &\le
    C\|H\|_{L^2(D)}
    \|(\varphi,\Psi)\|_{H^2(B')}^{3/4}
    \|(\varphi,\oprot\Psi)\|_{L^2(B'\setminus\overbar D)}^{1/4}.
\end{split}
\end{equation}
\end{lemma}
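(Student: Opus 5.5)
The plan is to mimic the proof of Lemma \ref{lem.bdry.int} with $\lambda=0$, exploiting the very special structure of the target $V=(0,H)$ with $H\in X_{{\rm har}}(D)$, $q=H_*\cdot H$.

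\textbf{Step 1: reduce to boundary terms.} Apply the blockwise Green formula (Appendix \ref{appx.bdry.int}) to $V=(0,H)$, $q$, and the adjoint solution $(\varphi,\Psi)$ of \eqref{eq.stat.adjoint} with forcing $\mathtt e_D\mathsf E$, with $s_1,s_2$ the associated pressures. Since $v=0$, the identity \eqref{eq2.prf.lem.bdry.int} loses every term containing $v$, in particular $\mathcal{B}_{\rm Coup}$ and the $s_1$-term. Since $\oprot H=0$ and $H\cdot n=0$ on $\partial D$, $\mathcal{B}_{\rm pr,2}$ and the $(\oprot H\times n)\cdot\overbar\Psi$ part also vanish. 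We are left with
\[
    \langle V,\mathsf E\rangle_D
    =
    -\int_{\partial D} q\, n\cdot\overbar{\varphi}\dd\sigma
    +\int_{\partial D}(H\times n)\cdot\overbar{\omega}\dd\sigma,
    \qquad \omega=\oprot\Psi.
\]
This is the identity already displayed in the proof of Lemma \ref{lem.Atopo} (\ref{item3.lem.Atopo}). Here, unlike in Lemma \ref{lem.bdry.int}, no pressure-mean or Helmholtz-splitting argument in the exterior components $\mathcal O_\ell$ should be needed. This is exactly why the right-hand side involves only $\|H\|_{L^2(D)}$ and no factor $\langle\lambda\rangle$.

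\textbf{Step 2: bound the traces of $H$ and $q$.} Because $X_{{\rm har}}(D)$ is finite dimensional and consists of smooth fields, all norms are equivalent on it. Hence
\[
    \|H\|_{H^{1/2}(\partial D)}+\|q\|_{H^{1/2}(\partial D)}
    \le C\|H\|_{W^{1,\infty}(D)}\big(1+\|H_*\|_{W^{1,\infty}(D)}\big)
    \le C\|H\|_{L^2(D)},
\]
using the smallness assumption of Remark \ref{rem.smallness} to absorb $H_*$. The two boundary terms are then dominated by $C\|H\|_{L^2(D)}\big(\|\varphi\|_{H^{-1/2}(\partial D)}+\|\omega\times n\|_{H^{-1/2}(\partial D)}\big)$. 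Alternatively one can pair with $\|\varphi\|_{H^{1/2}}$ and $\|(\oprot\Psi)\times n\|_{H^{-1/2}}$ exactly as in \eqref{est1.prf.lem.bdry.int}.

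\textbf{Step 3: trace interpolation from the exterior.} We invoke the trace interpolation estimate \eqref{est1.prf.lem.bdry.int}. Its derivation only uses that $(\varphi,\Psi)\in H^2(B')$, that the traces on $\partial D$ may be taken from the exterior side $B'\setminus\overbar D$, interpolation between $L^2$ and $H^1$ (resp.\ $H^2$) on each component $\mathcal O_\ell$, and $\|\oprot\Psi\|_{H^1(B'\setminus\overbar D)}\le C\|\Psi\|_{H^2(B')}$. It is therefore independent of $\lambda$. Here $H^2(B')$-regularity of the weak solution from Lemma \ref{lem.solver.topo} and Remark \ref{rem.lem.solver.topo} (adjoint form) follows from interior elliptic regularity, since the forcing $\mathtt e_D\mathsf E$ is in $L^2$ and $\tilde H_*\in W^{1,\infty}$. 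Combining with Step 2 gives \eqref{est.lem.bdry.int.topo}.

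The main obstacle I anticipate is Step 1: checking carefully in the appendix's Green formula that, with $\lambda=0$ and $v=0$, no residual term involving $s_2$ or the tangential part of $\Psi$ survives. If a term $\int_{\partial D}(\oprot H\times n)\cdot\overbar\Psi$ or $(H\cdot n)\overbar{s_2}$ appeared in a different guise, both vanish identically here, so there should be nothing to estimate. A second point to verify is that the $H^2(B')$ bound for $(\varphi,\Psi)$ needs no lower-order correction, since the solution lives only in $\dot H^1$. Since the right-hand side keeps $\|(\varphi,\Psi)\|_{H^2(B')}$ as given, this does not matter for the present statement.
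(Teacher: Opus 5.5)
Your proposal is correct and follows the paper's proof essentially step for step. It uses the same reduced Green identity $\langle V,\mathsf E\rangle_D=-\int_{\partial D}q\,n\cdot\overbar{\varphi}\dd\sigma+\int_{\partial D}(H\times n)\cdot\oprot\overbar{\Psi}\dd\sigma$ (obtained from $v=0$, $\oprot H=0$, $H\cdot n=0$), the bound $\|H\|_{H^1(D)}+\|q\|_{H^1(D)}\le C\|H\|_{L^2(D)}$ from finite-dimensionality of $X_{{\rm har}}(D)$, and the $\lambda$-independent trace interpolation estimate \eqref{est1.prf.lem.bdry.int} for $\|\varphi\|_{H^{1/2}(\partial D)}$ and $\|(\oprot\Psi)\times n\|_{H^{-1/2}(\partial D)}$; the only cosmetic point is that the smallness of $H_*$ you invoke in Step 2 is unnecessary, since $C$ may depend on the fixed $H_*$.
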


\begin{proof}
Applying the same Green formula as in Lemma \ref{lem.bdry.int} gives
\[
    \langle V,\mathsf{E}\rangle_D
    =
    -\int_{\partial D}
    q n\cdot\overbar{\varphi}\dd \sigma
    +
    \int_{\partial D}
    (H\times n)\cdot\oprot\overbar{\Psi}\dd \sigma.
\]
Indeed, $v=0$, $\oprot H=0$, and $H\cdot n=0$ on $\partial D$ eliminate the other terms.

The trace interpolation estimate
\[
\begin{split}
    &\|\varphi\|_{H^{1/2}(\partial D)}
    + \|(\oprot\Psi)\times n\|_{H^{-1/2}(\partial D)} \\
    &\le
    C
    \|(\varphi,\Psi)\|_{H^2(B')}^{3/4}
    \|(\varphi,\oprot\Psi)\|_{L^2(B'\setminus\overbar D)}^{1/4}
\end{split}
\]
follows as in \eqref{est1.prf.lem.bdry.int}.  Since $X_{{\rm har}}(D)$ is finite-dimensional and $q = H_*\cdot H$, we have
\[
    \|H\|_{H^1(D)}+\|q\|_{H^1(D)}
    \le
    C\|H\|_{L^2(D)}.
\]
Combining these estimates yields the desired estimate \eqref{est.lem.bdry.int.topo}.
\end{proof}

The following proposition is the main result of this section.

\begin{proposition}\label{prop.approx.topo}
There exists $C>0$ such that the following holds: let $V=(0,H)\in \mathcal{X}^{\rm tgt}_0$. Then, for $\eps>0$, there exists $\mathsf F\in L^2(Y)^3\times L^2_\sigma(Y)$ such that $U=U_{\mathsf{F}}$ in Lemma \ref{lem.solver.topo} approximates $V$ in $D$ as 
\[
    \|V - U\|_{L^2(D)}
    \le
    \eps 
    \|H\|_{L^{2}(D)}. 
\]
Moreover, 
\begin{equation}\label{est.F.topo}
    \|\mathsf{F}\|_{L^2(Y)}
    \le
    \exp
    \bigg(
    C
    \Big(\frac{1}{\eps} \Big)^{4/\mu}
    \bigg)
    \|H\|_{L^2(D)}, 
\end{equation}
where $\mu$ is introduced in Lemma \ref{lem.stab.topo}. 
\end{proposition}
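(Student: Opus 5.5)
The plan is to repeat the singular value argument of Proposition \ref{prop.approx.resol} with $\lambda=0$, using the stationary tools of Section \ref{sec.harm}. By Lemma \ref{lem.Atopo} (\ref{item3.lem.Atopo}) we have $V=(0,H)\in\mathcal{X}^{\rm tgt}_0\subset\mathcal{X}_0$. Hence we may expand $V=\sum_j\beta_jV_j$ with $\beta_j=\langle V,V_j\rangle_D$, using the singular system $\{(\alpha_j,F_j,V_j)\}$ of Lemma \ref{lem.Atopo} (\ref{item4.lem.Atopo}). For $N\in\N$ we set $\mathsf F_N=\sum_{j\le N}\alpha_j^{-1}\beta_jF_j$. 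Then $A_0\mathsf F_N=\sum_{j\le N}\beta_jV_j$, so $U_{\mathsf F_N}|_D=A_0\mathsf F_N$. We also have $\|\mathsf F_N\|_{L^2(Y)}\le\alpha_N^{-1}\|H\|_{L^2(D)}$, and the error $\mathsf E_N=V-A_0\mathsf F_N=\sum_{j>N}\beta_jV_j$ satisfies $\|\mathsf E_N\|^2_{L^2(D)}=\langle V,\mathsf E_N\rangle_D$ by orthogonality.

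Next I let $(\varphi,\Psi)$ solve the adjoint system \eqref{eq.stat.adjoint} with forcing $\mathtt e_D\mathsf E_N$; this is solvable by Remark \ref{rem.lem.solver.topo}. By Lemma \ref{lem.Atopo} (\ref{item1.lem.Atopo}),
\[
\|\varphi\|_{L^2(Y)}+\|\mathbb P_Y(\Psi|_Y)\|_{L^2(Y)}\le\|A_0^*\mathsf E_N\|_{L^2(Y)}\le\alpha_{N+1}\|\mathsf E_N\|_{L^2(D)}.
\]
Local elliptic regularity on $B'$ for the adjoint system gives $\|(\varphi,\Psi)\|_{H^2(B')}\le C\|\mathsf E_N\|_{L^2(D)}$. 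This is the step I expect to be the main obstacle. The Lax--Milgram solution only controls $\nabla(\varphi,\Psi)$ in $L^2(\R^3)$. To get an $L^2(B')$ bound I will normalize by the Sobolev embedding $\dot H^1\hookrightarrow L^6$, which needs no additive constant. The data is in $L^2$ with compact support, so $\ell_F$ is bounded on $\dot H^1$ and the pairing uses $L^{6/5}$ data. Interior $H^2$ regularity on $B'$ then follows from regularity of the coupled Stokes/curl-curl system with small $W^{1,\infty}$ coefficients, treating the coupling as a lower order perturbation on a slightly larger ball.

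Combining Lemma \ref{lem.vor.interp} with the bound on $\mathbb P_Y(\Psi|_Y)$ gives $\|(\varphi,\oprot\Psi)\|_{L^2(Y_k)}\le C\alpha_{N+1}^{1/2}\|\mathsf E_N\|_{L^2(D)}$ on subdomains $Y_k\Subset Y\cap\mathcal O_k$, one in each component $\mathcal O_k$ of $B'\setminus\overbar D$. In $\mathcal O_k$ the forcing vanishes, so $(\varphi,\omega)=(\varphi,\oprot\Psi)$ solves \eqref{eq.lem.stab.topo}. Lemma \ref{lem.stab.topo} with $\mathcal E=C\|\mathsf E_N\|$ and $\eta=C\alpha_{N+1}^{1/2}\|\mathsf E_N\|$ yields
\[
\|(\varphi,\omega)\|_{L^2(B'\setminus\overbar D)}\le C\|\mathsf E_N\|_{L^2(D)}\big(\log\alpha_{N+1}^{-1}\big)^{-\mu}.
\]
If $\alpha_{N+1}$ is not small, the trivial bound suffices after enlarging $C$.

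Inserting this and the $H^2$ bound into Lemma \ref{lem.bdry.int.topo} gives $\|\mathsf E_N\|^2\le C\|H\|\,\|\mathsf E_N\|\,(\log\alpha_{N+1}^{-1})^{-\mu/4}$, that is, $\|\mathsf E_N\|_{L^2(D)}\le C\|H\|_{L^2(D)}(\log\alpha_{N+1}^{-1})^{-\mu/4}$. Finally I choose $N$ so that $\alpha_N^{-1}\le\exp(C^{4/\mu}\eps^{-4/\mu})<\alpha_{N+1}^{-1}$; this is possible since $\alpha_j\downarrow0$, and if $\mathcal X_0$ is finite dimensional the error is exactly zero for large $N$. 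This gives $\|V-U_{\mathsf F_N}\|_{L^2(D)}\le\eps\|H\|_{L^2(D)}$ and \eqref{est.F.topo} with $\mathsf F=\mathsf F_N$.
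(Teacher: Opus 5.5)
Your proposal is correct and follows the paper's argument. The paper proves Proposition \ref{prop.approx.topo} by rerunning the singular-value truncation of Proposition \ref{prop.approx.resol} for $A_0$, with Lemmas \ref{lem.stab.topo} and \ref{lem.bdry.int.topo} in place of Lemmas \ref{lem.stab} and \ref{lem.bdry.int}, and you write out the details it omits, including the $\dot H^1\hookrightarrow L^6$ normalization behind the $H^2(B')$ bound that replaces the resolvent estimate used when $\lambda\neq0$.
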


\begin{proof}
By Lemma \ref{lem.Atopo} (\ref{item3.lem.Atopo}), we have
$V\in\mathcal{X}_0$. We then apply the proof of Proposition
\ref{prop.approx.resol} to the operator $A_0$, using Lemma \ref{lem.Atopo} (\ref{item4.lem.Atopo}), with
Lemma \ref{lem.stab.topo} and Lemma \ref{lem.bdry.int.topo} in
place of Lemma \ref{lem.stab} and Lemma \ref{lem.bdry.int},
respectively. The details are omitted to avoid repetition. 
\end{proof}

    \section{Proof of Theorem \ref{thm.main}}
    \label{sec.prf}

In this section, we prove Theorem \ref{thm.main}. We first approximate the time-evolving part by the Dunford integral representation together with the resolvent approximation in Section \ref{sec.resol}, and then we treat the stationary topological part by the result of Section \ref{sec.harm}. We conclude by combining the two constructions.

Let $V(t)$ be a solution to the linearized MHD system \eqref{eq.MHD} written as 
\begin{equation}\label{eq.V}
    V(t) =  V_{{\rm evol}}(t) + V_{{\rm topo}},
    \qquad
    V_{{\rm evol}}(t) = e^{-t {\mathbb{S}}} V_0,
    \qquad
    V_{{\rm topo}} = (0, H_*'),
\end{equation}
for $V_0\in D({\mathbb{S}})$ and $H_*^\prime\in X_{{\rm har}}(D)$.

Recall that $\tilde{H}_*$ denotes the extended background field in Lemma \ref{lem.ext.H}.

    \paragraph{Approximation of the time-evolving part.}

We start by approximating the time-evolving part $V_{{\rm evol}}$ in \eqref{eq.V}. First we fix a solenoidal whole-space extension of the initial data. Since $V_0=(v_0,H_0)\in D(\mathbb{S})$, both $v_0$ and $H_0$ are divergence-free $H^2$-fields on $D$ with zero normal trace on $\partial D$. By the divergence-free extension theorem of Kato-Mitrea-Ponce-Taylor \cite[Corollary 3.2]{KatoMitreaPonceTaylor2000}, applied with $s=2$ and $p=2$, there exists $\tilde{V}_0=(\tilde{v}_0,\tilde{H}_0)\in H^2_\sigma(\R^3)^3\times H^2_\sigma(\R^3)^3$ with compact support in a fixed ball containing $\overbar{D}$ such that
\begin{equation}\label{est1.ext.V0}
    \tilde{V}_0|_D = V_0,
    \qquad
    \|\tilde{V}_0\|_{L^2(\R^3)}
    \le
    C\|V_0\|_{L^2(D)},
    \qquad
    \|\tilde{V}_0\|_{H^2(\R^3)}
    \le
    C\|V_0\|_{H^2(D)}.
\end{equation}
By elliptic regularity for the Stokes block and \cite[Lemma 2.2]{KozonoShimizuYanagisawa2025} for the magnetic block, the graph norm of $\mathbb{S}$ is equivalent on $D(\mathbb{S})$ to the $H^2(D)$-norm. Since moreover $0\in\rho(-\mathbb{S})$, 
\begin{equation}\label{est2.ext.V0}
    \|V_0\|_{H^2(D)}
    \le
    C\|\mathbb{S} V_0\|_{L^2(D)}.
\end{equation}

Instead of approximating $V_{{\rm evol}}$ directly, we introduce the reference solution
\[
    U_{{\rm ref}}(t) 
    := 
    e^{-t\mathbb{S}_{\R^3}}\tilde{V}_0, 
\]
where $\mathbb{S}_{\R^3}$ is the global MHD operator in Section \ref{sec.ext}, and consider the difference 
\[
    E(t) 
    := 
    V_{{\rm evol}}(t) - U_{{\rm ref}}(t).
\]
Fix $\rho_0\in(0,1)$ arbitrarily. Using the Dunford integral, we represent $E(t)$ as
\[
    E(t) 
    = 
    \frac{1}{2\pi \ii} 
    \int_{\gamma} 
    \exp(\lambda t) V(\lambda) \dd \lambda, 
    \qquad
    V(\lambda) 
    := (\lambda + \mathbb{S})^{-1} V_0 
    - \big((\lambda + \mathbb{S}_{\R^3})^{-1} \tilde{V}_0\big)|_D. 
\]
Here $\gamma$ is chosen as follows. By the sectoriality of both $\mathbb{S}$ and $\mathbb{S}_{\R^3}$, there exists $\delta\in(0,\pi/2)$ such that
$
    \overbar{\Sigma_{\pi-\delta}}\setminus\{0\}
    \subset
    \rho(-\mathbb{S})\cap\rho(-\mathbb{S}_{\R^3})
$
holds; see Remark \ref{rem.smallness} and \cite[Proposition 1.2.13]{Lunardi1995}. We then let $\gamma$ denote the oriented (counterclockwise) contour
\[
    \gamma
    = 
    \Big\{
    |\oparg z|=\pi-\delta, \mkern9mu |z|\ge \rho_0 
    \Big\} 
    \cup 
    \Big\{
    |\oparg z| \le \pi-\delta, \mkern9mu |z|=\rho_0 
    \Big\}. 
\]

We split the integral into high- and low-frequency parts with a parameter $L > \rho_0$:
\[
\begin{split}
    E(t) 
    &:=
    E_{{\rm high}}(t) + E_{{\rm low}}(t) \\
    &:=
    \frac{1}{2\pi \ii} 
    \int_{\gamma \cap \{|\lambda| > L\}} 
    \exp(\lambda t) V(\lambda) \dd \lambda
    + 
    \frac{1}{2\pi \ii} 
    \int_{\gamma \cap \{|\lambda| \le L\}} 
    \exp(\lambda t) V(\lambda) \dd \lambda. 
\end{split}
\]
For the high-frequency part $E_{{\rm high}}$, we apply the argument in \cite{Higaki2025_Runge}. We write 
\[
    V(\lambda) 
    = -\frac{1}{\lambda} 
    \Big( 
    (\lambda+\mathbb{S})^{-1} \mathbb{S}V_0 
    - \big((\lambda+\mathbb{S}_{\R^3})^{-1} \mathbb{S}_{\R^3}\tilde{V}_0\big)|_D 
    \Big).
\]
This, together with the standard sectorial resolvent estimate $\|(\lambda+\mathbb{S})^{-1}\|\le C|\lambda|^{-1}$ and its analogue for $\mathbb{S}_{\R^3}$ (see \cite[Proposition 1.2.13]{Lunardi1995} and \cite[Chapter 2]{Pazy1983}) yields 
\[
\begin{split}
    \|V(\lambda)\|_{L^2(D)} 
    \le
    \frac{C}{|\lambda|^2} 
    \big( 
    \|\mathbb{S} V_0\|_{L^2(D)} 
    + \|\mathbb{S}_{\R^3} \tilde{V}_0\|_{L^2(\R^3)} 
    \big) 
    \le
    \frac{C}{|\lambda|^2} 
    \|\mathbb{S} V_0\|_{L^2(D)}, 
\end{split}
\]
where we have used \eqref{est1.ext.V0}--\eqref{est2.ext.V0}. Consequently, $E_{{\rm high}}$ is estimated as
\begin{equation}\label{est.E.high}
\begin{split}
    \|E_{{\rm high}}(t)\|_{L^2(D)}
    &\le
    C\|\mathbb{S} V_0\|_{L^2(D)}
    \int_{\gamma\,\cap \{|\lambda|>L\}}
    \frac{\exp(t \Re \lambda)}{|\lambda|^2}
    \dd |\lambda| \\
    &\le
    \frac{C\|\mathbb{S} V_0\|_{L^2(D)}}{L}
    \exp
    (-tL\cos\delta),
    \quad
    t \ge 0.
\end{split}
\end{equation}

For the low-frequency part $E_{{\rm low}}$, we set
\[
    \gamma_L = \gamma \cap \{|\lambda|\le L\}.
\]
For every $\lambda\in \gamma_L$, using $J_Y$ in \eqref{def.T.J}, we define the operator
\[
    T_\lambda F
    =
    \big(
    (\lambda + {\mathbb{S}}_{\R^3})^{-1} 
    J_Y F
    \big)\big|_D,
    \quad
    F\in L^2(Y)^3\times L^2_\sigma(Y).
\]
The map $\lambda\mapsto T_\lambda$ is analytic on $\gamma_L$ as a bounded operator from $L^2(Y)^3\times L^2_\sigma(Y)$ to $L^2(D)^6$. Since $\tilde{V}_0|_D=V_0$ and $\tilde{H}_*=H_*$ on $D$, the field $V(\lambda)$ satisfies the homogeneous local resolvent system \eqref{eq.MHD.resol.loc} in $D$. Moreover, it holds that $V(\lambda)\in \mathcal{X}^{\rm tgt}$ by a similar argument when extending $V_0$ to $\tilde{V}_0$. In addition, the resolvent estimates and \eqref{est1.ext.V0}--\eqref{est2.ext.V0} imply
\begin{align*}
    \|V(\lambda)\|_{H^2(D)}
    +
    \|q(\lambda)\|_{H^1(D)}
    &\le
    C \langle \lambda\rangle
    \big(\|V_0\|_{L^2(D)}+\|\tilde{V}_0\|_{L^2(\R^3)}\big)\\
    &\le
    C \langle \lambda\rangle
    \|\mathbb{S} V_0\|_{L^2(D)},
    \quad
    \lambda\in\gamma_L,
\end{align*}
with the pressure normalized by $\int_D q(\lambda)\dd x=0$. Therefore Proposition \ref{prop.approx.resol} yields that, for every $\eta>0$ and every $\lambda\in \gamma_L$, there exists $F_\lambda^\sharp$ in $L^2(Y)^3\times L^2_\sigma(Y)$ such that
\[
    \|V(\lambda)-T_\lambda F_\lambda^\sharp\|_{L^2(D)}
    \le
    C \eta 
    \langle \lambda\rangle 
    \|\mathbb{S} V_0\|_{L^2(D)},
\]
satisfying, using resolvent estimates and \eqref{est1.ext.V0}, 
\[
    \|F_\lambda^\sharp\|_{L^2(Y)}
    \le
    M \|V_0\|_{L^2(D)},
    \qquad
    M
    :=
    \exp
    \bigg(
    C
    \Big(
    \frac{\exp(C\langle L \rangle)}{\eta}
    \Big)^{4/\mu}
    \bigg).
\]
We now replace the noncanonical choice $F_\lambda^\sharp$ by a canonical one that varies continuously with $\lambda$. For each $\lambda$ in $\gamma_L$ and each $\tau>0$, we consider the Tikhonov functional
\begin{equation*}
    J_{\lambda,\tau}(F)
    :=
    \|V(\lambda)-T_\lambda F\|_{L^2(D)}^2
    + \tau \|F\|_{L^2(Y)}^2,
    \quad
    F \in L^2(Y)^3\times L^2_\sigma(Y).
\end{equation*}
This functional is strictly convex and coercive, hence it has a unique minimizer. Since its Euler-Lagrange equation reads, letting $T_\lambda^*$ denote the adjoint of $T_\lambda$, 
\begin{equation*}
    (T_\lambda^* T_\lambda + \tau I) 
    \mathsf{F}_\tau(\lambda)
    =
    T_\lambda^* V(\lambda),
\end{equation*}
the minimizer is equivalently given by
\begin{equation*}
    \mathsf{F}_\tau(\lambda)
    =
    T_\lambda^*
    (T_\lambda T_\lambda^* + \tau I)^{-1}
    V(\lambda).
\end{equation*}
Choosing $\tau = \eta^2 L^2/M^2$, we see that
\begin{equation*}
\begin{split}
    J_{\lambda,\tau}(F_\lambda^\sharp)
    &\le
    \|V(\lambda)-T_\lambda F_\lambda^\sharp\|_{L^2(D)}^2
    + \tau \|F_\lambda^\sharp\|_{L^2(Y)}^2 \\
    &\le
    C \eta^2 L^2 \|\mathbb{S} V_0\|_{L^2(D)}^2
    + \tau M^2 \|V_0\|_{L^2(D)}^2 \\
    &\le
    C \eta^2 L^2 \|\mathbb{S} V_0\|_{L^2(D)}^2.
\end{split}
\end{equation*}
From now on we set
\begin{equation*}
    \mathsf{F}(\lambda)
    :=
    \mathsf{F}_\tau(\lambda),
    \qquad
    U^\prime_{{\rm corr}}(\lambda)
    :=
    (\lambda + \mathbb{S}_{\R^3})^{-1}
    J_Y \mathsf{F}(\lambda).
\end{equation*}
By minimality, 
$
    J_{\lambda,\tau}(\mathsf{F}(\lambda))
    \le
    J_{\lambda,\tau}(F_\lambda^\sharp)
$. Thus we have
\begin{equation*}
    \|V(\lambda)-U^\prime_{{\rm corr}}(\lambda)\|_{L^2(D)}
    \le
    C \eta 
    L 
    \|\mathbb{S} V_0\|_{L^2(D)}
\end{equation*}
and, using the definition of $\tau$,
\begin{equation*}
    \|\mathsf{F}(\lambda)\|_{L^2(Y)}
    \le
    C \tau^{-1/2} 
    \eta
    L \|\mathbb{S} V_0\|_{L^2(D)}
    \le
    C M
    \|\mathbb{S} V_0\|_{L^2(D)}.
\end{equation*}
The point of this replacement is that the minimizer is now canonical. Since $\lambda\mapsto T_\lambda$ and $\lambda\mapsto V(\lambda)$ are analytic on $\gamma_L$, the map $\lambda\mapsto T_\lambda T_\lambda^* + \tau I$ is continuous and takes values in boundedly invertible operators. By the explicit formula for the minimizer and continuity of inversion on the open set of boundedly invertible operators, it follows that
\begin{equation*}
    \lambda\mapsto \mathsf{F}(\lambda),
    \qquad
    \lambda\mapsto U^\prime_{{\rm corr}}(\lambda)
\end{equation*}
are continuous on $\gamma_L$. In particular, the Bochner integrals below are well defined.

Using $U^\prime_{{\rm corr}}(\lambda)$, we define the correction term in the time domain as
\[
    U_{{\rm corr}}(t)
    =
    \frac{1}{2\pi \ii}
    \int_{\gamma_L}
    \exp(\lambda t)
    U^\prime_{{\rm corr}}(\lambda) 
    \dd \lambda.
\]
Then the error is bounded by
\begin{equation}\label{est.E.low}
\begin{split}
    \|E_{{\rm low}}(t) - U_{{\rm corr}}(t)\|_{L^2(D)}
    &\le
    \int_{\gamma_L}
    \exp(t \Re \lambda)
    \|V(\lambda)-U^\prime_{{\rm corr}}(\lambda)\|_{L^2(D)}
    \dd |\lambda| \\
    &\le
    C \eta L \|\mathbb{S} V_0\|_{L^2(D)}
    \int_{\gamma_L}
    \exp(t \Re \lambda)
    \dd |\lambda| \\
    &\le
    C \eta L^2 \|\mathbb{S} V_0\|_{L^2(D)} \exp(\rho_0 t),
    \quad
    t \ge 0.
\end{split}
\end{equation}

Set
\[
    U_{{\rm evol}}(t) = U_{{\rm ref}}(t) + U_{{\rm corr}}(t).
\]
Then $U_{{\rm evol}}=(u,B)$ solves
\begin{equation}\label{eq.MHD.evol}
\left\{
    \begin{array}{ll}
    \partial_{t} u - \Delta u
    - \tilde{H}_*\cdot\nabla B - B\cdot\nabla \tilde{H}_*
    + \nabla p
    = \mathtt{e}_Y f&\mbox{in}\ \R^3\times (0,\infty), \\
    \partial_{t} B + \oprot (\oprot B) + \oprot (\tilde{H}_* \times u)
    = \mathtt{e}_Y g&\mbox{in}\ \R^3\times (0,\infty), \\
    \opdiv u
    = \opdiv B
    = 0&\mbox{in}\ \R^3\times [0,\infty).
    \end{array}\right.
\end{equation}
Here the forcing $(f,g)$ is defined by
\[
    (f,g)(t)
    =
    \frac{1}{2\pi \ii}
    \int_{\gamma_L}
    \exp(\lambda t) \mathsf{F}(\lambda) \dd \lambda.
\]
Since $\gamma_L$ is compact and 
$
\lambda\mapsto \mathsf F(\lambda)
$
is continuous, differentiation under the integral sign shows that $(f,g)$ belongs to $C^\infty([0,\infty);L^2(Y)^3\times L^2_\sigma(Y))$. It is estimated as
\begin{equation}\label{est.fg}
\begin{split}
    \|(f,g)(t)\|_{L^2(Y)}
    &\le
    \int_{\gamma_L}
    \exp(t \Re \lambda)
    \|\mathsf{F}(\lambda)\|_{L^2(Y)}
    \dd |\lambda| \\
    &\le
    C M \|\mathbb{S} V_0\|_{L^2(D)}
    \int_{\gamma_L}
    \exp(t \Re \lambda)
    \dd |\lambda| \\
    &\le
    \exp
    \bigg(
    C
    \Big(
    \frac{\exp(C\langle L \rangle)}{\eta}
    \Big)^{4/\mu}
    \bigg)
    \|\mathbb{S} V_0\|_{L^2(D)} \exp(\rho_0 t),
    \quad
    t \ge 0.
\end{split}
\end{equation}

    \paragraph{Approximation of the topological part.}

We next approximate the topological part $V_{{\rm topo}}$ in \eqref{eq.V}. We apply Proposition \ref{prop.approx.topo}: for any $\eps>0$, there exists $(i,j)\in L^2(Y)^3\times L^2_\sigma(Y)$ such that the solution $U_{{\rm topo}}=(u,B)$ of
\begin{equation}\label{eq.MHD.topo}
\left\{
    \begin{array}{ll}
    - \Delta u 
    - \tilde{H}_*\cdot\nabla B - B\cdot\nabla \tilde{H}_* 
    + \nabla p 
    = \mathtt{e}_Y i&\mbox{in}\ \R^3, \\
    \oprot (\oprot B) + \oprot (\tilde{H}_* \times u)
    = \mathtt{e}_Y j&\mbox{in}\ \R^3, \\
    \opdiv u 
    = \opdiv B 
    = 0&\mbox{in}\ \R^3, 
    \end{array}\right. 
\end{equation}
approximates $V_{{\rm topo}}$ in $D$ as
\begin{equation}\label{est.H.B}
    \|V_{{\rm topo}} - U_{{\rm topo}}\|_{L^2(D)}
    <\eps \|H_*'\|_{L^2(D)}.
\end{equation}
Moreover, 
\begin{equation}\label{est.ij}
    \|(i,j)\|_{L^2(Y)}
    \le
    \exp
    \bigg(
    C
    \Big(\frac{1}{\eps} \Big)^{4/\mu}
    \bigg)
    \|H_*'\|_{L^2(D)}. 
\end{equation}

    \paragraph{Conclusion.}

Now we define the total global approximation $U=U(t)$ by
\[
    U(t) = U_{{\rm evol}}(t) + U_{{\rm topo}}.
\]
Then $U$ solves \eqref{eq.thm.main} due to \eqref{eq.MHD.evol} and \eqref{eq.MHD.topo}. Combining the high and low frequency estimates \eqref{est.E.high} and \eqref{est.E.low}, respectively, with \eqref{est.H.B}, and then using \eqref{est1.ext.V0}--\eqref{est2.ext.V0}, we obtain
\[
\begin{split}
    &\|V(t) - U(t)\|_{L^2(D)} \\
    &\le
    C\bigg(
    \frac{1}{L}
    \exp(-tL\cos\delta)
    + \eta L^2 \exp(\rho_0 t)
    \bigg)
    \|\mathbb{S} V_0\|_{L^2(D)}
    + C\eps \|H_*^\prime\|_{L^2(D)}. 
\end{split}
\]

Choosing $L=\eps^{-1}$ and $\eta=\eps^3$, we arrive at \eqref{est.thm.main}. The estimate of the forcing $(f,g)$ is verified by \eqref{est.fg} and that of $(i,j)$ by \eqref{est.ij}. The proof of Theorem \ref{thm.main} is complete.
\hfill $\Box$

    \appendix

    \section{Proof of \eqref{eq2.prf.lem.A}}
    \label{appx.bdry.int}

In this appendix, we derive the equality \eqref{eq2.prf.lem.A}.

\begin{proofx}{\eqref{eq2.prf.lem.A}}
Since $\tilde{H}_*=H_*$ in $D$ and $\mathtt{e}_D V=V$ in $D$, the conjugate of \eqref{eq1.prf.lem.A} gives
\[
\begin{split}
    \langle U,V\rangle_D
    &=
    \int_D
    u\cdot
    \Big(
    \lambda\overbar{\varphi}
    -\Delta\overbar{\varphi}
    + (\oprot\overbar{\Psi})\times H_*
    + \nabla\overbar{s_1}
    \Big)
    \dd x \\
    &\quad
    +
    \int_D
    B\cdot
    \Big(
    \lambda\overbar{\Psi}
    + \oprot(\oprot\overbar{\Psi})
    + H_*\cdot\nabla\overbar{\varphi}
    - (\nabla H_*)^\top\overbar{\varphi}
    + \nabla\overbar{s_2}
    \Big)
    \dd x.
\end{split}
\]
On the other hand, testing \eqref{eq0.prf.lem.A} by $(\overbar{\varphi},\overbar{\Psi})$ yields
\[
\begin{split}
    0
    &=
    \int_D
    \Big(
    \lambda u-
    \Delta u
    - H_*\cdot\nabla B
    - B\cdot\nabla H_*
    + \nabla p
    \Big)
    \cdot\overbar{\varphi}
    \dd x \\
    &\quad
    +
    \int_D
    \Big(
    \lambda B
    + \oprot(\oprot B)
    + \oprot(H_*\times u)
    \Big)
    \cdot\overbar{\Psi}
    \dd x.
\end{split}
\]
Subtracting this identity from the previous one, the $\lambda$-terms cancel and we obtain
\[
    \langle U,V\rangle_D
    =
    I_{{\rm Stokes}}
    + I_{{\rm Mag}}
    + I_{{\rm Coup}}
    + I_{{\rm pr},2},
\]
where
\begin{align*}
    I_{{\rm Stokes}}
    &:=
    \int_D
    \Big(
    u\cdot
    (-\Delta\overbar{\varphi}+\nabla\overbar{s_1})
    - 
    (-\Delta u+\nabla p)\cdot\overbar{\varphi}
    \Big)
    \dd x,\\
    I_{{\rm Mag}}
    &:=
    \int_D
    \Big(
    B\cdot\oprot(\oprot\overbar{\Psi})
    -
    \oprot(\oprot B)\cdot\overbar{\Psi}
    \Big)
    \dd x,\\
    I_{{\rm Coup}}
    &:=
    \int_D
    B\cdot
    \Big(
    H_*\cdot\nabla\overbar{\varphi}
    - (\nabla H_*)^\top\overbar{\varphi}
    \Big)
    \dd x \\
    &\quad
    +
    \int_D
    (H_*\cdot\nabla B + B\cdot\nabla H_*)
    \cdot\overbar{\varphi}
    \dd x \\
    &\quad
    +
    \int_D
    u\cdot\big((\oprot\overbar{\Psi})\times H_*\big)
    \dd x
    -
    \int_D
    \oprot(H_*\times u)\cdot\overbar{\Psi}
    \dd x,\\
    I_{{\rm pr},2}
    &:=
    \int_D
    B\cdot\nabla\overbar{s_2}
    \dd x.
\end{align*}
We use the following Green formulas, with $n$ denoting the outward unit normal to $D$:
\begin{align*}
    \int_D a\cdot\nabla\overbar r\dd x
    &=
    -\int_D (\opdiv a)\,\overbar r\dd x
    + \int_{\partial D}(a\cdot n)\,\overbar r\dd \sigma,\\
    \int_D a\cdot\oprot\overbar b\dd x
    &=
    \int_D \oprot a\cdot\overbar b\dd x
    + \int_{\partial D}(a\times n)\cdot\overbar b\dd \sigma,\\
    \int_D \oprot a\cdot\overbar b\dd x
    &=
    \int_D a\cdot\oprot\overbar b\dd x
    + \int_{\partial D}(n\times a)\cdot\overbar b\dd \sigma.
\end{align*}

Since $\opdiv u=\opdiv\varphi=0$ in $D$, two integrations by parts for the Laplacian terms and one integration by parts for the pressure terms give
\begin{equation*}
\begin{split}
    I_{{\rm Stokes}}
    &=
    \int_{\partial D}
    \Big(
    (-pn)\cdot\overbar{\varphi}
    + (u\cdot n) \overbar{s_1}
    - u\cdot\partial_n\overbar{\varphi}
    + \partial_nu\cdot\overbar{\varphi}
    \Big)
    \dd \sigma \\
    &=
    \mathcal{B}_{{\rm Stokes}}(u,p,\varphi).
\end{split}
\end{equation*}
Similarly, the rot Green formulas give
\begin{equation*}
\begin{split}
    I_{{\rm Mag}}
    &=
    \int_D
    B\cdot\oprot(\oprot\overbar{\Psi})\dd x
    -
    \int_D
    \oprot(\oprot B)\cdot\overbar{\Psi}\dd x\\
    &=
    \int_{\partial D}
    \Big(
    (B\times n)\cdot\oprot\overbar{\Psi}
    +(\oprot B\times n)\cdot\overbar{\Psi}
    \Big)
    \dd \sigma \\
    &=
    \mathcal{B}_{{\rm Mag}}(B,\Psi).
\end{split}
\end{equation*}
The coupling part is simplified as follows. By the convention for $(\nabla H_*)^\top$, we have
\[
    B\cdot\big((\nabla H_*)^\top\overbar{\varphi}\big)
    =
    (B\cdot\nabla H_*)\cdot\overbar{\varphi}.
\]
Thus the corresponding two terms cancel. The remaining $\varphi$-coupling is
\begin{equation*}
\begin{split}
    &\int_D
    \Big(
    B\cdot(H_*\cdot\nabla\overbar{\varphi})
    + (H_*\cdot\nabla B)\cdot\overbar{\varphi}
    \Big)
    \dd x \\
    &=
    \int_D
    \opdiv\big((B\cdot\overbar{\varphi})H_*\big)
    \dd x
    =
    \int_{\partial D}
    (H_*\cdot n)(B\cdot\overbar{\varphi})
    \dd \sigma
    =0, 
\end{split}
\end{equation*}
where we have used $\opdiv H_*=0$ in $D$ and $H_*\cdot n=0$ on $\partial D$. For the remaining $\Psi$-coupling, the scalar triple product identity gives
\[
    u\cdot\big((\oprot\overbar{\Psi})\times H_*\big)
    =
    (H_*\times u)\cdot\oprot\overbar{\Psi}.
\]
Therefore, by the rot Green formula,
\begin{equation*}
\begin{split}
    &\int_D
    u\cdot\big((\oprot\overbar{\Psi})\times H_*\big)
    \dd x
    -
    \int_D
    \oprot(H_*\times u)\cdot\overbar{\Psi}
    \dd x \\
    &=
    \int_{\partial D}
    \big((H_*\times u)\times n\big)\cdot\overbar{\Psi}
    \dd \sigma.
\end{split}
\end{equation*}
Since $H_*\cdot n=0$ on $\partial D$, the identity
$(a\times b)\times c=(a\cdot c)b-(b\cdot c)a$ yields
\[
    (H_*\times u)\times n
    =
    (H_*\cdot n)u-(u\cdot n)H_*
    =
    -(u\cdot n)H_*
\]
on $\partial D$. Consequently,
\[
    I_{{\rm Coup}}
    =
    -\int_{\partial D}
    (u\cdot n)(H_*\cdot\overbar{\Psi})
    \dd \sigma
    =
    \mathcal{B}_{{\rm Coup}}(U,W).
\]
Finally, since $\opdiv B=0$ in $D$, the gradient Green formula gives
\begin{equation*}
    I_{{\rm pr},2}
    =
    \int_{\partial D}
    (B\cdot n)\,\overbar{s_2}\dd \sigma
    =
    \mathcal{B}_{{\rm pr},2}(B,s_2).
\end{equation*}
Combining the four identities for $I_{{\rm Stokes}}$, $I_{{\rm Mag}}$, $I_{{\rm Coup}}$, and $I_{{\rm pr},2}$ proves \eqref{eq2.prf.lem.A}.
\end{proofx}

    \section*{Acknowledgment}

A part of this work was done during the authors' stay at the Research Institute for Mathematical Sciences (RIMS), Kyoto University, to attend the RIMS Workshop ``Mathematical Analysis of Viscous Incompressible Fluid”. The authors are grateful for its hospitality.

\end{document}